\DeclareFontShape{T1}{lmr}{bx}{sc} { <-> ssub * cmr/bx/sc }{}
\pgfplotsset{compat=newest}
\numberwithin{equation}{section}
\setlist[enumerate]{label=(\roman*)}
\theoremstyle{plain}
\newtheorem{theorem}{Theorem}[section]
\newtheorem{proposition}[theorem]{Proposition}
\newtheorem{lemma}[theorem]{Lemma}
\newtheorem{corollary}[theorem]{Corollary}
\newtheorem{remark}[theorem]{Remark}
\newtheorem{definition}[theorem]{Definition}
\newtheorem{assumption}[theorem]{Assumption}
\newtheorem{example}[theorem]{Example}
\newcommand{\Crefpoint}[1]{(\seeSymbol \Cref{#1})}
\newcommand{\N}{\ensuremath\mathbb{N}}
\newcommand{\R}{\ensuremath\mathbb{R}}
\newcommand{\rT}[1]{#1^\top}
\newcommand{\rTb}[1]{\rT{\left( #1 \right)}}
\newcommand{\symplInv}[1]{#1^{+}}
\newcommand{\norm}[1]{\left\lVert #1 \right\rVert}
\newcommand{\abs}[1]{\left|#1\right|}
\newcommand{\sdd}[1]{\ensuremath{\tfrac{\mathrm{d^2}}{\mathrm{d}{#1}^2}}}
\newcommand{\dd}[1]{\ensuremath{\tfrac{\mathrm{d}}{\mathrm{d}{#1}}}}
\newcommand{\deldel}[1]{\ensuremath{\tfrac{\partial}{\partial{#1}}}}
\newcommand{\ddt}{\ensuremath{\dd{t}}}
\newcommand{\sddt}{\ensuremath{\sdd{t}}}
\newcommand{\inv}[1]{#1^{-1}}
\newcommand{\invb}[1]{\inv{\left( #1 \right)}}
\newcommand{\sq}[1]{#1^{1/2}}
\DeclareMathOperator{\id}{id}
\DeclareMathOperator{\real}{Re}
\DeclareMathOperator{\disjointUnion}{\dot\bigcup}
\DeclareMathOperator*{\argmin}{arg\,min}
\newcommand{\calA}{\mathcal{A}}
\newcommand{\calC}{\mathcal{C}}
\newcommand{\calE}{\mathcal{E}}
\newcommand{\calF}{\mathcal{F}}
\newcommand{\calI}{\mathcal{I}}
\newcommand{\calL}{\mathcal{L}}
\newcommand{\calM}{\mathcal{M}}
\newcommand{\calP}{\mathcal{P}}
\newcommand{\calQ}{\mathcal{Q}}
\newcommand{\calT}{\mathcal{T}}
\newcommand{\fd}{\ensuremath{\bm{d}}}
\newcommand{\fe}{\ensuremath{\bm{e}}}
\newcommand{\ff}{\ensuremath{\bm{f}}}
\newcommand{\fg}{\ensuremath{\bm{g}}}
\newcommand{\fm}{\ensuremath{\bm{m}}}
\newcommand{\fq}{\ensuremath{\bm{q}}}
\newcommand{\fv}{\ensuremath{\bm{v}}}
\newcommand{\fx}{\ensuremath{\bm{x}}}
\newcommand{\fA}{\ensuremath{\bm{A}}}
\newcommand{\fB}{\ensuremath{\bm{B}}}
\newcommand{\fD}{\ensuremath{\bm{D}}}
\newcommand{\fE}{\ensuremath{\bm{E}}}
\newcommand{\fF}{\ensuremath{\bm{F}}}
\newcommand{\fI}{\ensuremath{\bm{I}}}
\newcommand{\fL}{\ensuremath{\bm{L}}}
\newcommand{\fR}{\ensuremath{\bm{R}}}
\newcommand{\fV}{\ensuremath{\bm{V}}}
\newcommand{\fW}{\ensuremath{\bm{W}}}
\newcommand{\fX}{\ensuremath{\bm{X}}}
\newcommand{\fY}{\ensuremath{\bm{Y}}}
\newcommand{\ftheta}{\ensuremath{\bm{\theta}}}
\newcommand{\frho}{\ensuremath{\bm{\varrho}}}
\newcommand{\fzero}{\ensuremath{\bm{0}}}
\newcommand{\stepApproximation}{\textcaps{Approximation}}
\newcommand{\stepReduction}{\textcaps{Reduction}}
\newcommand{\stepReconstruction}{\textcaps{Reconstruction}}
\newcommand{\reduce}[1]{\check{#1}}
\newcommand{\restrict}[2]{\left.#1\right|_{#2}}
\newcommand{\evalFun}[2][\point]{#2\left(#1\right)}
\newcommand{\evalField}[2][\point]{\mleft.\kern-\nulldelimiterspace{#2}\mright|_{#1}}
\newcommand{\stack}[2]{\left[ #1 \right]_{#2}}
\newcommand{\unstack}[3]{\left[ #1 \right]^{#2}_{#3}}
\newcommand{\D}{\fD}
\newcommand{\param}{\mu}
\newcommand{\paramSet}{\calP}
\newcommand{\paramDim}{{n_p}}
\newcommand{\point}{m}
\newcommand{\pointAlt}{w}
\newcommand{\pointRed}{\reduce{\point}}
\newcommand{\pointCoord}{\fm}
\newcommand{\pointAltCoord}{\bm{\pointAlt}}
\newcommand{\pointRedCoord}{\reduce{\pointCoord}}
\newcommand{\pointRedAltCoord}{\reduce{\pointAltCoord}}
\newcommand{\pointSub}{{\evalFun[{\pointRed}]{\emb}}}
\newcommand{\pointSubCoord}{{\evalFun[{\pointRedCoord}]{\embCoord}}}
\newcommand{\pointTanSpace}{Y}
\newcommand{\pointTanSpaceCoord}{\fY}
\newcommand{\mnf}{\calM}
\newcommand{\mnfAlt}{\calQ}
\newcommand{\mnfAltAlt}{\calL}
\newcommand{\dimMnf}{N}
\newcommand{\dimMnfAlt}{Q}
\newcommand{\dimMnfAltAlt}{L}
\newcommand{\dimMnfRed}{n}
\newcommand{\chartmap}{x}
\newcommand{\chartmapRed}{\reduce{\chartmap}}
\newcommand{\chartmapAlt}{y}
\newcommand{\chartmapMnfAlt}{y}
\newcommand{\chartmapMnfAltAlt}{z}
\newcommand{\chartdomain}{U}
\newcommand{\chartdomainAlt}{V}
\newcommand{\chartdomainMnfAlt}{V}
\newcommand{\chartdomainMnfAltAlt}{W}
\newcommand{\chartdomainRed}{\reduce{\chartdomain}}
\newcommand{\chart}{(\chartdomain, \chartmap)}
\newcommand{\chartAlt}{(\chartdomainAlt,\chartmapAlt)}
\newcommand{\chartMnfAlt}{(\chartdomainMnfAlt,\chartmapMnfAlt)}
\newcommand{\chartMnfAltAlt}{(\chartdomainMnfAltAlt, \chartmapMnfAltAlt)}
\newcommand{\chartmapTanSpace}{\xi}
\newcommand{\chartmapTanSpaceRed}{\reduce{\chartmapTanSpace}}
\newcommand{\tangentT}{T}
\newcommand{\cotangentT}{\tangentT^{\ast}}
\newcommand{\T}[1]{\tangentT{#1}}
\newcommand{\natProj}{\pi}
\newcommand{\Tp}[2][\point]{\tangentT_{#1}#2}
\newcommand{\TpMnf}[1][\point]{\Tp[#1]{\mnf}}
\newcommand{\TpMnfAlt}[1][\pointMnfAlt]{\Tp[#1]{\mnfAlt}}
\newcommand{\TpMnfRed}[1][\pointRed]{\Tp[#1]{\mnfRed}}
\newcommand{\TpMnfSub}[1][\pointSub]{\Tp[#1]{\big(\mnfSub\big)}}
\newcommand{\velocity}{v}
\newcommand{\velocityCoord}{\bm{\velocity}}
\newcommand{\Finfty}[1][\point]{{F^\infty_{#1}}}
\newcommand{\coT}[1]{\cotangentT{#1}}
\newcommand{\coTp}[2][\point]{\cotangentT_{#1}#2}
\newcommand{\coTpMnf}[1][\point]{\coTp[#1]{\mnf}}
\newcommand{\coTpMnfAlt}[1][\point]{\coTp[#1]{\mnfAlt}}
\newcommand{\coTpMnfRed}[1][\pointRed]{\coTp[#1]{\mnfRed}}
\newcommand{\basisTan}[1]{\deldel{\tidx{\chartmap}{#1}{}}}
\newcommand{\basisTanAt}[2][\point]{\evalField[#1]{\basisTan{#2}}}
\newcommand{\basisCoTan}[1]{\diff{\tidx{\chartmap}{#1}{}}}
\newcommand{\basisCoTanAt}[2][\point]{\evalField[#1]{\basisCoTan{#2}}}
\newcommand{\basisTanMnfAlt}[1]{\deldel{\tidx{\chartmapMnfAlt}{#1}{}}}
\newcommand{\basisCoTanMnfAlt}[1]{\diff{\tidx{\chartmapMnfAlt}{#1}{}}}
\newcommand{\chartmapMnfQPosi}[1]{\tidx{\chartmapTanSpaceMnfQ}{#1}{}}
\newcommand{\chartmapMnfQVeli}[1]{\tidx{\chartmapTanSpaceMnfQ}{\dimMnfQ+#1}{}}
\newcommand{\basisTMnfQTanPos}[1]{\deldel{\chartmapMnfQPosi{#1}}}
\newcommand{\basisTMnfQTanPosAt}[2][\pointMnfQTanSpace]{\evalField[#1]{\basisTMnfQTanPos{#2}}}
\newcommand{\basisTMnfQTanVel}[1]{\deldel{\chartmapMnfQVeli{#1}}}
\newcommand{\basisTMnfQTanVelAt}[2][\pointMnfQTanSpace]{\evalField[#1]{\basisTMnfQTanVel{#2}}}
\newcommand{\basisTMnfQCoTan}[1]{\diff{\tidx{\chartmapTanSpaceMnfQ}{#1}{}}}
\newcommand{\basisTMnfQCoTanPosAt}[2][\pointMnfQTanSpace]{\evalField[#1]{\diff{\chartmapMnfQPosi{#2}}}}
\newcommand{\basisTMnfQCoTanVel}[1]{\diff{\chartmapMnfQVeli{#1}}}
\newcommand{\basisTMnfQCoTanVelAt}[2][\pointMnfQTanSpace]{\evalField[#1]{\basisTMnfQCoTanVel{#2}}}
\newcommand{\chartmapMnfQRedPosi}[1]{\tidx{\chartmapTanSpaceMnfQRed}{#1}{}}
\newcommand{\chartmapMnfQRedVeli}[1]{\tidx{\chartmapTanSpaceMnfQRed}{\dimMnfQRed+#1}{}}
\newcommand{\basisTMnfQRedCoTanPos}[1]{\diff{\chartmapMnfQRedPosi{#1}}}
\newcommand{\basisTMnfQRedCoTanVel}[1]{\diff{\chartmapMnfQRedVeli{#1}}}
\newcommand{\basisTMnfQRedTan}[1]{\deldel{\tidx{\chartmapTanSpaceMnfQRed}{#1}{}}}
\newcommand{\basisTMnfQRedTanPos}[1]{\deldel{\chartmapMnfQRedPosi{#1}}}
\newcommand{\basisTMnfQRedTanVel}[1]{\deldel{\chartmapMnfQRedVeli{#1}}}
\newcommand{\basisTMnfQRedTanPosAt}[2][\pointMnfQTanSpaceRed]{\evalField[#1]{\basisTMnfQRedTanPos{#2}}}
\newcommand{\basisTMnfQRedTanVelAt}[2][\pointMnfQTanSpaceRed]{\evalField[#1]{\basisTMnfQRedTanVel{#2}}}
\newcommand{\basisTMnfQRedCoTan}[1]{\diff{\tidx{\chartmapTanSpaceMnfQRed}{#1}{}}}
\newcommand{\fracdiff}[2]{\tfrac{\partial #1}{\partial #2}}
\newcommand{\diff}[1]{\mathrm{d}{#1}}
\newcommand{\sfracdiff}[3]{\tfrac{\partial^2 #1}{\partial #3 \partial #2}}
\newcommand{\dualdiff}[1]{\mathrm{d}{#1}^\ast}
\newcommand{\pullback}[2]{{#1}^\ast #2}
\newcommand{\gmgPullbackCoordSymbol}{\tname{\fR}{\GMG}}
\newcommand{\gmgReduction}{\tname{R}{\GMG}}
\newcommand{\gmgReductionCoord}{\tname{\fR}{\GMG}}
\newcommand{\mpgReduction}{\tname{R}{\MPG}}
\newcommand{\redMapMPG}{\mpgReduction}
\newcommand{\mpgReductionCoordSymbol}{\tname{\fR}{\MPG}}
\newcommand{\smgReduction}{\tname{R}{\SMG}}
\newcommand{\smgReductionCoord}{\tname{\fR}{\SMG}}
\newcommand{\Trs}[2][r,s]{T^{(#1)}( #2 )}
\newcommand{\smoothSec}[1]{\Gamma\left( #1 \right)}
\newcommand{\vectorspace}{\mathcal{V}}
\newcommand{\trafo}[5]{%
\evalField[#2]{
    \frac{\partial \tidx{#1}{#3}{}}{\partial \tidx{#4}{#5}{}}
}}
\newcommand{\trafoEmb}[3][\pointRed]{%
\trafo{\emb}{#1}{#2}{\chartmapRed}{#3}}
\def \ifempty#1{\def\temp{#1} \ifx\temp\empty }
\newcommand{\tidx}[3]{
    \ifthenelse{\equal{#2}{}}{
        {#1}_{\underline{#3}}
    }{
        \ifthenelse{\equal{#3}{}}{
            {#1}^{\underline{#2}}
        }{
            {#1}^{\underline{#2}}_{\underline{#3}}
        }
    }
}
\newcommand{\TrsField}[2]{\smoothSec{\smash{\Trs[#1]{\T#2}}}}
\newcommand{\TrsFieldMnf}[1][r,s]{\TrsField{#1}{\mnf}}
\newcommand{\TrsFieldMnfAlt}[1][r,s]{\TrsField{#1}{\mnfAlt}}
\newcommand{\TrsFieldMnfRed}[1][r,s]{\TrsField{#1}{\mnfRed}}
\newcommand{\mnfRed}{\reduce{\mnf}}
\newcommand{\mnfSub}{\emb(\mnfRed)}
\newcommand{\vecbunSub}{E_{\mnfSub}}
\newcommand{\emb}{\varphi}
\newcommand{\embCoord}{\bm{\emb}}
\newcommand{\red}{\varrho}
\newcommand{\redCoord}{\bm{\varrho}}
\newcommand{\redMap}{R}
\newcommand{\redMapPoint}{\varrho}
\newcommand{\redMapPointCoord}{\frho}
\newcommand{\redMapCoord}{\bm{R}}
\newcommand{\redMapTan}[1]{\evalField[#1]{\redMap}}
\newcommand{\redMapTanMPGCoord}[1]{\evalField[#1]{\redMapCoord_{\MPG}}}
\newcommand{\redMapTanGMGCoord}[1]{\evalField[#1]{\redMapCoord_{\GMG}}}
\newcommand{\redMapLMGCoord}{\redMapCoord_{\LMG}}
\newcommand{\lmgReduction}{\tname{R}{\LMG}}
\newcommand{\mnfTime}{\calI}
\newcommand{\chartmapMnfTime}{\tname{\chartmap}{\mnfTime}}
\newcommand{\chartMnfTime}{(\mnfTime, \chartmapMnfTime)}
\newcommand{\pointTime}{t}
\newcommand{\pointTimeAlt}{s}
\newcommand{\pointTimeInit}{\tname{\pointTime}{0}}
\newcommand{\pointTimeEnd}{\tname{\pointTime}{\text{f}}}
\newcommand{\pointInit}{\tname{\curve}{0}}
\newcommand{\pointInitCoord}{\tname{\curveCoord}{0}}
\newcommand{\velocityInit}{\tname{\velocity}{0}}
\newcommand{\covec}{\lambda}
\newcommand{\tf}{\tau}
\newcommand{\tfRed}{\reduce{\tf}}
\newcommand{\tfCoord}{\bm{\tf}}
\newcommand{\tfRedCoord}{\reduce{\tfCoord}}
\newcommand{\vf}{X}
\newcommand{\vfRed}{\reduce{\vf}}
\newcommand{\vfCoord}{\fX}
\newcommand{\smoothVfs}[1][\mnf]{\mathfrak{X}_{#1}}
\newcommand{\smoothVfsRed}{\smoothVfs[\mnfRed]}
\newcommand{\covf}{\alpha}
\newcommand{\tensor}{\sigma}
\newcommand{\tensorCoord}{\bm{\tensor}}
\newcommand{\metric}{\tf}
\newcommand{\metricCoord}{\bm{\metric}}
\newcommand{\metricRed}{\reduce{\metric}}
\newcommand{\metricRedCoord}{\reduce{\metricCoord}}
\newcommand{\metricLMG}{\tname{\tf}{\LMG}}
\newcommand{\metricLMGCoord}{\tname{\tfCoord}{\LMG}}
\newcommand{\metricLMGRed}{\tname{\reduce{\tf}}{\LMG}}
\newcommand{\symplForm}{\omega}
\newcommand{\symplFormCoord}{\bm{\symplForm}}
\newcommand{\symplFormRed}{\reduce{\symplForm}}
\newcommand{\symplFormRedCoord}{\reduce{\symplFormCoord}}
\newcommand{\symplFormAlt}{\eta}
\newcommand{\pmetric}{g}
\newcommand{\pmetricCoord}{\bm{\pmetric}}
\newcommand{\pmetricRedCoord}{\reduce{\pmetricCoord}}
\newcommand{\pmetricNormCoord}[1]{\norm{#1}_{\pmetricCoord}}
\newcommand{\pmetricRedNormCoord}[1]{\norm{#1}_{\pmetricRedCoord}}
\newcommand{\isoFlat}[2][\metric]{\flat_{#1}\left( #2 \right)}
\newcommand{\isoSharp}[2][\metric]{\sharp_{#1}\left( #2 \right)}
\newcommand{\isoFlatRed}[2][\metric]{\isoFlat[\reduce{#1}]{#2}}
\newcommand{\curve}{\gamma}
\newcommand{\curveAlt}{\beta}
\newcommand{\curveRed}{\reduce{\curve}}
\newcommand{\curveRedAlt}{\reduce{\curveAlt}}
\newcommand{\curveCoord}{\bm{\curve}}
\newcommand{\curveRedCoord}{\reduce{\curveCoord}}
\newcommand{\lift}[1]{\Gamma_{#1}}
\newcommand{\flow}{\theta}
\newcommand{\flowAt}[1][\pointTime]{\flow_{#1}}
\newcommand{\flowRed}{\reduce{\flow}}
\newcommand{\flowRedAt}[1][\pointTime]{\flowRed_{#1}}
\newcommand{\pointTanSpaceRed}{\reduce{\pointTanSpace}}
\newcommand{\pointTanSpaceRedCoord}{\reduce{\pointTanSpaceCoord}}
\newcommand{\embTanSpace}{\emb}
\newcommand{\embTanSpaceCoord}{\embCoord}
\newcommand{\redTanSpace}{\redMapPoint}
\newcommand{\velocityRed}{\reduce{\velocity}}
\newcommand{\velocityInitRed}{\tname{\velocityRed}{0}}
\newcommand{\velocityRedCoord}{\reduce{\velocityCoord}}
\newcommand{\mnfQ}{\mathcal{Q}}
\newcommand{\energy}{\calE}
\newcommand{\energyRed}{\reduce{\energy}}
\newcommand{\basisMnfQTan}[1]{\deldel{\tidx{\chartmapMnfQ}{#1}{}}}
\newcommand{\basisMnfQTanAt}[2][\pointQ]{\evalField[#1]{\basisMnfQTan{#2}}}
\newcommand{\dimMnfQ}{Q}
\newcommand{\dimMnfQRed}{\reduce{Q}}
\newcommand{\chartmapMnfQ}{\chartmapAlt}
\newcommand{\chartdomainMnfQ}{\chartdomainAlt}
\newcommand{\chartdomainMnfQRed}{\reduce{\chartdomainAlt}}
\newcommand{\chartMnfQ}{(\chartdomainMnfQ, \chartmapMnfQ)}
\newcommand{\chartmapTanSpaceMnfQ}{\chartmapTanSpace}
\newcommand{\chartmapTanSpaceMnfQRed}{\chartmapTanSpaceRed}
\newcommand{\chartdomainTanSpaceMnfQ}{\T\chartdomainMnfQ}
\newcommand{\chartTanSpaceMnfQ}{(\chartdomainTanSpaceMnfQ, \chartmapTanSpaceMnfQ)}
\newcommand{\chartTanSpaceMnfQRed}{(\T\chartdomainMnfQRed, \chartmapTanSpaceMnfQRed)}
\newcommand{\mnfQRed}{\reduce{\mnfQ}}
\newcommand{\basisMnfQRedTan}[1]{\deldel{\tidx{\chartmapMnfQRed}{#1}{}}}
\newcommand{\basisMnfQRedTanAt}[2][\pointQRed]{\evalField[#1]{\basisMnfQRedTan{#2}}}
\newcommand{\mnfQSub}{\embQ(\mnfQRed)}
\newcommand{\pointQ}{q}
\newcommand{\pointQCoord}{\fq}
\newcommand{\pointQRed}{\reduce{\pointQ}}
\newcommand{\pointQRedCoord}{\reduce{\pointQCoord}}
\newcommand{\pointQInit}{\tname{\pointQ}{0}}
\newcommand{\pointQInitRed}{\tname{\pointQRed}{0}}
\newcommand{\pointMnfQTanSpace}{\tname{\pointTanSpace}{\mnfQ}}
\newcommand{\pointMnfQTanSpaceCoord}{\tname{\pointTanSpaceCoord}{\mnfQ}}
\newcommand{\pointMnfQTanSpaceRed}{\pointTanSpaceRed_\mnfQ}
\newcommand{\pointMnfQTanSpaceRedCoord}{\pointTanSpaceRedCoord_\mnfQ}
\newcommand{\curveMnfQ}{\tname{\curve}{\mnfQ}}
\newcommand{\curveMnfQCoord}{\tname{\curveCoord}\mnfQ}
\newcommand{\curveMnfQRed}{\tname{\curveRed}{\mnfQ}}
\newcommand{\curveMnfQRedCoord}{\tname{\curveRedCoord}{\mnfQ}}
\newcommand{\curveQCoord}{\tname{\curveCoord}{\pointQ}}
\newcommand{\curveVCoord}{\tname{\curveCoord}{\velocity}}
\newcommand{\chartmapMnfQRed}{\reduce{\chartmapMnfQ}}
\newcommand{\chartMnfQRed}{(\chartdomainMnfQRed,\chartmapMnfQRed)}
\newcommand{\Lag}{\mathscr{L}}
\newcommand{\LagCoord}{\pmb{\Lag}}
\newcommand{\LagRed}{\reduce{\Lag}}
\newcommand{\vfLag}{\tname{\vf}{\Lag}}
\newcommand{\vfLagRed}{\tname{\reduce{\vf}}{\LagRed}}
\newcommand{\metricV}{\tname{\tf}{\velocity}}
\newcommand{\metricVCoord}{\tname{\metricCoord}{\velocity}}
\newcommand{\metricQ}{\tname{\metric}{\pointQ}}
\newcommand{\metricQCoord}{\tname{\metricCoord}{\pointQ}}
\newcommand{\embQ}{\emb_\mnfQ}
\newcommand{\embQCoord}{\tname{\embCoord}{\mnfQ}}
\newcommand{\redMnfQ}{\tname{\red}{\mnfQ}}
\newcommand{\redMnfQCoord}{\tname{\redCoord}{\mnfQ}}
\newcommand{\metricQRed}{\tname{\metricRed}{\pointQ}}
\newcommand{\metricVRed}{\tname{\metricRed}{\velocity}}
\newcommand{\metricQVRed}{\tname{\metricRed}{\pointQ\velocity}}
\newcommand{\metricQRedCoord}{\tname{\metricRedCoord}{\pointQ}}
\newcommand{\metricVRedCoord}{\tname{\metricRedCoord}{\velocity}}
\newcommand{\metricQVRedCoord}{\tname{\metricRedCoord}{\pointQ\velocity}}
\newcommand{\canonicalJ}[1]{\tname{\mathbb{J}}{#1}}
\newcommand{\JtN}{\canonicalJ{2N}}
\newcommand{\Jtn}{\canonicalJ{2n}}
\newcommand{\Ham}{\mathscr{H}}
\newcommand{\HamCoord}{\pmb{\Ham}}
\newcommand{\HamRed}{\reduce{\Ham}}
\newcommand{\HamRedCoord}{\reduce{\HamCoord}}
\newcommand{\vfHam}{\tname{\vf}{\Ham}}
\newcommand{\vfHamCoord}{\tname{\vfCoord}{\Ham}}
\newcommand{\tname}[2]{{#1}_{{#2}}}
\newcommand{\solMnf}{S}
\newcommand{\solMnfTrain}{\tname{\solMnf}{\mathrm{train}}}
\newcommand{\redEmbFamily}{\tname{\calF}{\emb,\red}}
\newcommand{\loss}{L}
\newcommand{\metricMnfSymbol}{\tname{d}{\mnf}}
\newcommand{\metricMnf}[2]{\evalFun[#1,\,#2]{\metricMnfSymbol}}
\newcommand{\metricMnfCoordSymbol}{\tname{\fd}{\mnf}}
\newcommand{\metricMnfCoord}[2]{\evalFun[#1,\,#2]{\metricMnfCoordSymbol}}
\newcommand{\train}[1]{{#1}_{\mathrm{train}}}
\newcommand{\pointTrain}{{{\train{\point}}}}
\newcommand{\pointTrainCoord}{{{\train{\pointCoord}}}}
\newcommand{\pointTrainAltCoord}{{{\train{\pointAltCoord}}}}
\newcommand{\pointRedAltCoordTrain}{\train{\pointRedAltCoord}}
\newcommand{\classical}[1]{\tname{#1}{\mathrm{lin}}}
\newcommand{\embClassicCoord}{\classical{\embCoord}}
\newcommand{\redClassicCoord}{\classical{\redCoord}}
\newcommand{\redEmbClassicalFamily}{\tname{\calF}{\emb,\red,\mathrm{lin}}}
\newcommand{\quadKron}[1]{{#1}^{\otimes^{\mathrm s} 2}}
\newcommand{\nameQuad}[1]{\tname{#1}{\mathrm{quad}}}
\newcommand{\redQuadCoord}{\nameQuad{\redCoord}}
\newcommand{\embQuadCoord}{\nameQuad{\embCoord}}
\newcommand{\redEmbQuadFamily}{\tname{\calF}{\emb,\red,\mathrm{quad}}}
\newcommand{\dimMnfIntermed}{\widetilde{n}}
\newcommand{\nameNCA}[1]{\tname{#1}{\NCA}}
\newcommand{\redNCACoord}{\nameNCA{\redCoord}}
\newcommand{\embNCACoord}{\nameNCA{\embCoord}}
\newcommand{\redEmbNCAFamily}{\tname{\calF}{\emb,\red,\NCA}}
\newcommand{\networkParam}{\ftheta}
\newcommand{\numNetworkParam}{\tname{n}{\networkParam}}
\newcommand{\redEmbAeFamily}{\tname{\calF}{\emb,\red,\sfAE}}
\newcommand{\nameAe}[1]{\tname{#1}{\sfAE}}
\newcommand{\redAeCoord}{\nameAe{\redCoord}}
\newcommand{\embAeCoord}{\nameAe{\embCoord}}
\newcommand{\lossMSE}{\tname{L}{\MSE}}
\newcommand{\lossMSECoord}{\tname{\fL}{\MSE}}
\newcommand{\Rpos}{\R_{\geq 0}}
\newcommand{\embLipschitzConst}{\tname{C}{\emb}}
\newcommand{\redLipschitzConst}{\tname{C}{\red}}
\newcommand{\MOR}{\textsf{MOR}\xspace}
\newcommand{\ODE}{\textsf{ODE}\xspace}
\newcommand{\ROM}{\textsf{ROM}\xspace}
\newcommand{\FOM}{\textsf{FOM}\xspace}
\newcommand{\MPG}{\textsf{MPG}\xspace}
\newcommand{\GMG}{\textsf{GMG}\xspace}
\newcommand{\LMG}{\textsf{LMG}\xspace}
\newcommand{\SMG}{\textsf{SMG}\xspace}
\newcommand{\PDE}{\textsf{PDE}\xspace}
\newcommand{\PDEs}{\textsf{PDEs}\xspace}
\newcommand{\MSE}{\textsf{MSE}\xspace}
\newcommand{\POD}{\textsf{POD}\xspace}
\newcommand{\ANNs}{\textsf{ANNs}\xspace}
\newcommand{\NCA}{\textsf{NCA}\xspace}
\newcommand{\sfAE}{\textsf{AE}\xspace}
\newcommand{\seeSymbol}{$\triangleright\,$}
\title[MOR on manifolds: a differential geometric framework]{Model reduction on manifolds:\\ a differential geometric framework}
\author[P.~Buchfink \and S.~Glas \and B.~Haasdonk \and B.~Unger]{Patrick Buchfink${}^{\dagger,\ddagger}$ \and Silke Glas${}^\ddagger$ \and Bernard~Haasdonk${}^\dagger$ \and Benjamin Unger${}^\star$}
\address{${}^{\dagger}$ Institute of Applied Analysis and Numerical Simulation, University of Stuttgart, Pfaffenwaldring 57, 70569 Stuttgart, Germany}
\email{\{patrick.buchfink,haasdonk\}@ians.uni-stuttgart.de}
\address{${}^{\ddagger}$ Department of Applied Mathematics, University of Twente, P.O. Box 217, 7500 AE Enschede, The Netherlands}
\email{s.m.glas@utwente.nl}
\address{${}^{\star}$ Stuttgart Center for Simulation Science (SC SimTech), University of Stuttgart, Universit\"{a}tsstr.~32, 70569 Stuttgart, Germany}
\email{benjamin.unger@simtech.uni-stuttgart.de}
\date{\today}
\begin{document}

\begin{abstract}
	Using nonlinear projections and preserving structure in model order reduction (MOR) are currently active research fields. In this paper, we provide a novel differential geometric framework for model reduction on smooth manifolds, which emphasizes the geometric nature of the objects involved. The crucial ingredient is the construction of an embedding for the low-dimensional submanifold and a compatible reduction map, for which we discuss several options. Our general framework allows capturing and generalizing several existing MOR techniques, such as structure preservation for Lagrangian- or Hamiltonian dynamics, and using nonlinear projections that are, for instance, relevant in transport-dominated problems. The joint abstraction can be used to derive shared theoretical properties for different methods, such as an exact reproduction result. To connect our framework to existing work in the field, we demonstrate that various techniques for data-driven construction of nonlinear projections can be included in our framework.
\end{abstract}

\maketitle
{\footnotesize \textsc{Keywords:} model reduction on manifolds, nonlinear projection, Petrov--Galerkin, Lagrangian systems, Hamiltonian systems, data-driven projection
}

{\footnotesize \textsc{AMS subject classification:} 34A26, 34C20, 37C05, 37N30, 65P10}
%
%
\section{Introduction}
\label{sec:intro}
To remedy the computational cost associated with repeated solutions of high-dimensional differential equations, \emph{model order reduction} (\MOR) has become an established tool over the last three decades. For an overview of \MOR we refer to \cite{BenCOW17,AntBG20,QuaMN16,HesRS16,QuaR14}. The essential idea of \MOR approaches can be summarized as follows: Given a high-dimensional initial value problem, which we refer to as the \emph{full-order model} (\FOM), find a low-dimensional surrogate system, referred to as the \emph{reduced-order model} (\ROM), which is computationally efficient to evaluate. A computationally efficient surrogate model can be interesting in various contexts; for instance
(i) if the \FOM has to be evaluated for many different parameters (e.g., for parameter studies, sampling-based uncertainty quantification, optimization, or inverse problems),
(ii) if the \FOM has to be evaluated in realtime (e.g., for model-based control), or
(iii) if the computational resources are too little to run the \FOM (e.g., on embedded devices).
To achieve this goal, classical linear-subspace \MOR strives to identify a problem-specific low-dimensional linear subspace such that the state of the initial value problem approximately evolves within this subspace. While this is possible in many applications, the existence of a low-dimensional subspace with good approximation properties cannot always be guaranteed. Mathematically, this can be analyzed by studying the Kolmogorov $n$-widths \cite{Kol36,Pin85} (or equivalently, as shown in \cite{UngG19}, the Hankel singular values), associated with the \emph{set of all solutions} (see the forthcoming \Cref{sec:mor_on_mnf} for the precise definition). 
For wave-like phenomena in solutions as observed in transport-problems (e.g., the wave-equation or advection-equation)
it is now well-understood that the $n$-widths decay slowly for certain initial conditions \cite{CagMS19,GreU19}, thus requiring a large dimension of the \ROM for a good approximation. To resolve this problem, different paths are pursued in the literature, most of which try to replace the linear-subspace assumption with a nonlinear ansatz. We refer to \cite[Cha.\,1.3.1]{Sch23} for an overview. In more detail, we assume to be given an initial value problem (the \FOM) of the form
\begin{equation}
	\label{eqn:FOM}
	\ddt{\fx}(t; \param) = \ff(t,\fx(t;\param);\param),\qquad \fx(t_0;\param) = \fx_0(\param)\in\R^{\dimMnf},
\end{equation}
with time interval $\mnfTime \vcentcolon=(\pointTimeInit,\pointTimeEnd)$, $\pointTimeInit < \pointTimeEnd < \infty$ parameter set $\paramSet \subseteq \R^{p}$,
corresponding parameter $\param\in\paramSet$, and right-hand side $\ff\colon \mnfTime\times\R^{\dimMnf}\times \paramSet\to\R^{\dimMnf}$, which we want to solve
for the unknown state $\fx\colon\mnfTime\times\paramSet\to\R^{\dimMnf}$.
Roughly speaking, the idea of \MOR is to construct a projection $\embCoord \circ \redCoord$ using two mappings $\redCoord\colon \R^{\dimMnf} \to \R^\dimMnfRed$ and $\embCoord\colon \R^\dimMnfRed \to \R^{\dimMnf}$ with $\dimMnfRed\ll\dimMnf$ and to then derive a low-dimensional surrogate model of the \FOM with these mappings. In classical linear-subspace \MOR, the mappings $\redCoord$ and $\embCoord$ are linear, i.e., $\redCoord(\fx) = \rT{\fW}\fx$ and $\embCoord(\reduce{\fx}) = \fV\reduce{\fx}$ with matrices $\fW,\fV\in\R^{\dimMnf\times\dimMnfRed}$ satisfying $\rT\fW \fV = \tname{\fI}{\dimMnfRed}$. The associated \ROM is given by
\begin{equation}
	\label{eqn:ROM}
	\ddt{\reduce{\fx}}(t; \param) = \rT{\fW}\ff(t,\fV\reduce{\fx}(t;\param);\param),\qquad \reduce{\fx}(t_0;\param) = \rT{\fW}\fx_0(\param)\in\R^{\dimMnfRed},
\end{equation}
which we solve for the reduced state $\reduce{\fx}\colon\mnfTime\times\paramSet\to\R^{\dimMnfRed}$. In contrast, we allow for nonlinear mappings in this manuscript, which motivates us to study the \FOM~\eqref{eqn:FOM} as a differential equation on a manifold.

\subsection{Main contributions}

In the present paper, we introduce a differential geometric framework for \MOR as a unifying framework that contains classical linear-subspace approaches, nonlinear projection frameworks (including machine learning approaches such as autoencoders), and structure-preserving \MOR. Our main contributions are:
\begin{enumerate}
	\item We provide a general differential geometric framework for \MOR on manifolds in \Cref{sec:gen_frame}. Although the geometric elements we introduce in \Cref{sec:prelim} are, of course, not novel, to the best of our knowledge, there is no framework unifying this for \MOR. Moreover, we inspect recent approaches of \MOR on manifolds and show that these fit into this framework \Crefpoint{tab:methods_covered}.
	\item On top of the general framework for \MOR on manifolds, we introduce the \emph{manifold Petrov--Galerkin} (\MPG, \seeSymbol\Cref{subsec:MPG}) and \emph{generalized manifold Galerkin} (\GMG, \seeSymbol\Cref{subsec:MPG}) reduction, which generalize the \MOR techniques from \cite{OttMR23,Lee20}. Moreover, the \GMG reduction forms the basis for novel structure-preserving variants on manifolds for 
	\begin{enumerate}
		\item[(a)] Lagrangian systems \Crefpoint{subsec:MORLagrangian}, which we denote by \emph{Lagrangian manifold Galerkin} (\LMG), thus extending the linear-subspace model reduction methods in \cite{Lall2003,Carlberg2015}, and
		\item[(b)] Hamiltonian systems \Crefpoint{subsec:MORHamiltonian}, which we denote by \emph{symplectic manifold Galerkin} (\SMG), extending the \MOR method in \cite{BucGH21}.
	\end{enumerate}
	\item For the respective \MOR methods, we give an overview of techniques existing in the literature to construct the nonlinear mappings $\redCoord$ and $\embCoord$ in a data-driven fashion in \Cref{sec:embedding_generation}.
\end{enumerate}

Moreover, we provide an exact reproduction result for \MOR on manifolds \Crefpoint{thm:exact_repro} and discuss a relaxation of the point projection property~\eqref{eq:projProperty:pointProjection} for autoencoders in \Cref{lem:autoencoder_inverseProperty}.

We emphasize that we start with the differential geometric perspective already at the level of the \FOM. The main reason for this choice is that starting directly with a differential equation on a manifold highlights the different geometric objects that appear. Note that we focus on a general framework and not on an efficient-to-evaluate surrogate model, which calls upon efficient numerical implementations or additional approximation steps commonly referred to as hyper-reduction.

\subsection{State-of-the-art}
In the following we provide an overview of the various aspects of \MOR that fit into this geometric framework.

\subsubsection{MOR and manifolds}
Using (smooth) manifolds in the \MOR community is a concept that has been introduced previously. For parametric linear models, interpolation of the linear subspaces or the reduced system matrices on certain manifolds is discussed, for instance, in \cite{AmsCCF09,Son12,MasGSSA19,Zim21} and has recently been extended to a non-intrusive setting in \cite{OulA21}.
To reduce a high-dimensional parameter space during the training phase, the concept of \emph{active manifolds} \cite{BonF21} was developed as a generalization of the so-called \emph{active subspace} \cite{Con15}, which can be interpreted as the dual concept of the Kolmogorov $n$-widths \cite{UngG19}.
Moreover, \emph{lifting techniques} may be used to obtain a nonlinear projection of the original system, e.g., \cite{Gu11,KraW19} or for a non-intrusive setting \cite{QiaKPW20}.

\subsubsection{Nonlinear mappings and transport MOR}
If a (localized) quantity is transported through the spatial domain of a \PDE over time, such as a shock wave, then it is often not possible to construct a low-dimensional linear subspace that well-approximates the solution since the Kolmogorov $n$-widths do not decay exponentially. Examples studied in the literature are, for instance, the advection equation \cite{OhlR16,RimML18}, the wave equation \cite{GreU19}, Burgers' equation \cite{CagMS19}, a pulsed detonation combustor \cite{SchRM19}, a wildland fire model \cite{BlaSU21b}, and a rotating detonation engine \cite{MenBALK20}.

Several nonlinear approaches have been presented to overcome the slowly decaying Kolmogorov $n$-widths, many revolving around the symmetry reduction framework \cite{KirA92,RowM00,BeyT04,OhlR13}. We mention here exemplary the shifted proper orthogonal decomposition \cite{ReiSSM18,BlaSU20,BlaSU22}, the Lagrangian reference frame method \cite{MojB17,CopCHC22}, a registration method \cite{Tad20,FerTZ22}, and front transport reduction \cite{KraBHR22}. The central idea underlying these methods is to either first transform the state with a suitable transformation such that the resulting transformed \FOM is easy to approximate or to encode this transformation directly in the \MOR ansatz space.

While the previous approaches are all inspired by the underlying physics of the problem at hand by exploiting the symmetries inherent to the initial value problem, the approaches can be generalized by considering arbitrary nonlinear mappings, for instance, obtained via machine learning paradigms. The natural method for dimensionality reduction is a (shallow) autoencoder \cite{Kas16,HarM17,Lee20,KimCWZ22,OttMR23}. In particular, the work \cite{Lee20} uses terminology from differential geometry and has inspired our work to a large extent. Another parameterization of the nonlinear mappings that are currently investigated is given by polynomials; see, for instance, \cite{Barnett2022,Geelen2022,Benner2022,Issan2023,Sharma2023}.

\subsubsection{Structure-preserving MOR for Lagrangian and Hamiltonian systems}
Classical linear-subspace \MOR for Lagrangian systems is discussed in \cite{Lall2003,Carlberg2015}. Notably, the authors of \cite{Lall2003} already mention that the same methods can be used with nonlinear embeddings $\embCoord$, albeit without explicitly formulating the required differential geometric objects. Moreover, we show how a reduced Lagrangian system can be interpreted as a projection of the Euler--Lagrangian vector field using the \GMG reduction \Crefpoint{thm:equivalence_solve_lmg}.

Structure-preserving \MOR for Hamiltonian systems is discussed in \cite{PenM2016,AfkH17,GonWW2017,YilGB23} using linear subspaces 
and in \cite{BucGH21,Sharma2023} for manifolds (in coordinates). A Hamiltonian-preserving Neural Galerkin scheme is presented in \cite{SchSBP23}. Moreover, some of the ideas for structure-preserving \MOR for port-Hamiltonian systems \cite{SchJ14}, a generalization of Hamiltonian systems to open systems, can be used for structure-preserving \MOR for Hamiltonian systems. We refer to \cite[Rem.~8.2]{MehU23} for an overview.

Besides classical \MOR schemes that rely on a given large-scale dynamical system, non-intrusive methods aim to learn a potentially low-dimensional representation from system measurements directly. In the context of learning Hamiltonian systems, we exemplary mention \cite{GreDY19,GruT23,ShaWK22,YilGBB23}.

\subsection{Structure of the paper}
To render the manuscript self-contained, we start our exposition by reviewing all necessary concepts from differential geometry \Crefpoint{sec:prelim}. Readers familiar with these concepts might skip this section and directly start with \Cref{sec:mor_on_mnf}, where we introduce our general \MOR framework for initial value problems on manifolds. Additional structure preservation is detailed in \Cref{sec:structurePreservation}, which is based on additional geometrical structures \Crefpoint{sec:DifGeoPartTwo}. A discussion on specific data-driven construction approaches for the required nonlinear mappings is presented in \Cref{sec:embedding_generation} and followed by conclusions \Crefpoint{sec:conclusions}.

\subsection{Notation}
\label{sec:notation}
We use the \emph{index notation}, which differentiates between upper indices $\tidx{\velocity}{i}{}$ and lower indices $\tidx{\covec}{}{i}$. Let us emphasize that indices that concern the index notation are underlined. The position of the index indicates the type of the geometric object.
Moreover, we utilize the \emph{Einstein summation convention}, which implies the summation over an index if the index appears twice (once as a lower index and once as an upper index). For an $\dimMnf$-dimensional vector space $\vectorspace$, this notation is used to abbreviate
(i) the linear combination of a basis $\{ \tidx{E}{}{i} \}_{i=1}^{\dimMnf} \subseteq \vectorspace$ with coefficients $\{ \tidx{\velocity}{i}{} \}_{i=1}^{\dimMnf} \subseteq \R$,
(ii) the linear combination of a dual basis $\{ \tidx{F}{i}{} \}_{i=1}^{\dimMnf} \subseteq \vectorspace^\ast$ with coefficients $\{ \tidx{\covec}{}{i} \}_{i=1}^{\dimMnf} \subseteq \R$,
or (iii) the dual product of the respective coefficients,
\begin{align}\label{eq:einstein_summation_convention}
	&\text{(i) } \tidx{\velocity}{i}{} \tidx{E}{}{i}
\vcentcolon= \sum_{i=1}^{\dimMnf} \tidx{\velocity}{i}{} \tidx{E}{}{i} \in \vectorspace,&
	&\text{(ii) } \tidx{\covec}{}{i} \tidx{F}{i}{}
\vcentcolon= \sum_{i=1}^{\dimMnf} \tidx{\covec}{}{i} \tidx{F}{i}{} \in \vectorspace^\ast,&
	&\text{(iii) } \tidx{\velocity}{i}{} \, \tidx{\covec}{}{i}
\vcentcolon= \sum_{i=1}^{\dimMnf} \tidx{\velocity}{i}{} \, \tidx{\covec}{}{i} \in \R.&
\end{align}
Moreover, we use $\stack{\tidx{\velocity}{i}{}}{1 \leq i \leq \dimMnf} \in \R^{\dimMnf}$ to stack scalars $\tidx{\velocity}{i}{} \in \R$ as a vector in $\R^{\dimMnf}$.
Further notation is introduced in \Cref{subsec:bold_notation}.

\section{A primer on differential geometry}
\label{sec:prelim}
We start this work by recalling several important definitions and results from the theory of smooth manifolds to render this manuscript self-contained. Our presentation is largely based on the monograph \cite{Lee12}. In particular, all material within this section that is not explicitly referenced is adopted from \cite{Lee12}. 

To motivate the forthcoming definitions, we briefly discuss the tools required 
\begin{enumerate}
	\item to formulate a differential equation on a manifold and
	\item to define the submanifold and mappings needed
\end{enumerate}
to perform model reduction on manifolds.
For the differential geometric formulation of the \FOM,
we stepwise introduce the structure of a \emph{smooth manifold} starting from \emph{topological spaces}.
The topology allows us to characterize \emph{continuous functions} and \emph{smooth manifolds} \Crefpoint{subsec:smooth_manifolds}.
Then, having the needed structure at hand, we continue to define \emph{continuously differentiable functions} on smooth manifolds \Crefpoint{subsec:functions}.
Subsequently, we introduce the \emph{tangent space} at a point on the manifold \Crefpoint{subsec:tangent_space} to be able to formulate the \emph{differential of a function} \Crefpoint{subsec:differential}, which is used to generalize the time-derivative of the state to the manifold setting. 
In order to describe the evolution of an initial value problem,
we set the right-hand side to be a \emph{vector field} \Crefpoint{subsec:tangent_bundle_vector_field}.
With these preparations, a differential equation on a manifold can be formulated \Crefpoint{subsec:curves_dyn_sys}. We refer to \Cref{tab:fom_notation_new} for a comparison of a dynamical system on a vector space and on a smooth manifold. Furthermore, for the model reduction framework, we discuss embedded submanifolds \Crefpoint{subsec:submnf}.

\begin{table}[ht]
	\centering
	\small
	\caption{Formulation of a dynamical system in the time interval $\mnfTime$ on a vector space (left) in comparison to a differential geometric formulation (right).}
	\label{tab:fom_notation_new}
	\begin{tabular}{rl@{\hspace{4em}}rl}
		\toprule
		\multicolumn{2}{l}{\textbf{dynamical system on a vector space}} & \multicolumn{2}{l}{\textbf{dynamical system on a manifold}}\\
		\midrule
		$\R^\dimMnf$ & $\dimMnf$-dim.~vector space & $\mnf$ & $\dimMnf$-dim.~smooth manifold\\
		&& $\TpMnf$, $\T\mnf$& tangent space, tangent bundle\\
		$\ff\colon \R^{\dimMnf} \to \R^\dimMnf$ & right-hand side & $\vf\colon \mnf \to \T\mnf$ & vector field\\
		$\fx\colon \mnfTime\to\R^{\dimMnf}$ & solution curve & $\curve\colon \mnfTime\to\mnf$ & solution curve\\
		$\ddt \fx(t) \in \R^\dimMnf$ & time-derivative & $\evalField[\pointTime]{\ddt \curve} \in \TpMnf[{{\evalFun[\pointTime]{\curve}}}]$ & velocity\\\midrule 
		\multicolumn{2}{l}{$\left\{\quad\begin{aligned}
					\ddt\fx(t) &= \ff(\fx(t)) \in \R^{\dimMnf}\\
					\fx(\pointTimeInit) &= \fx_0 \in \R^{\dimMnf}
				\end{aligned}\right.$} & 
		\multicolumn{2}{l}{$\left\{\quad\begin{aligned}
					\evalField[\pointTime]{\ddt \curve} &= \evalField[{
				\evalFun[\pointTime]{\curve}
			}]{\vf} \in \TpMnf[{{\evalFun[\pointTime]{\curve}}}]\\
				\evalFun[\pointTimeInit]{\curve} &= \pointInit \in \mnf
				\end{aligned}\right.$}\\\bottomrule
	\end{tabular}
\end{table}

\subsection{Chart and smooth manifold}
\label{subsec:smooth_manifolds}
For two topological spaces $\calM$ and $\mnfAlt$ \Crefpoint{appx:topology}, a map $F\colon \calM \to \mnfAlt$ is called a \emph{homeomorphism} if (i) it is bijective (and thus the inverse~$\inv{F}\colon \mnfAlt \to \calM$ exists) and (ii) both, $F$ and $\inv{F}$, are continuous.
Moreover, $\calM$ is called \emph{locally homeomorphic to} $\R^{\dimMnf}$ for $\dimMnf\in\N$ if for every point $\point \in \calM$ there exists an open set $\chartdomain \subseteq \calM$ with $\point \in \chartdomain$
and a homeomorphism $F\colon U \to \evalFun[U]{F} \subseteq \R^{\dimMnf}$.
A topological space $\mnf$ is called a \emph{topological manifold} of \emph{dimension} $\dimMnf$ if it is locally homeomorphic to $\R^{\dimMnf}$ (and additionally Hausdorff and second-countable, see e.g.\ \cite[Cha.~1 and App.~A]{Lee12}).
We denote the dimension with $\dim(\mnf) = N$.

Let $\mnf$ be a topological manifold of dimension $\dimMnf$.
A \emph{chart} is a tuple $\chart$ where the \emph{chart domain} $\chartdomain \subseteq \mnf$ is an open set and the \emph{chart mapping} $\chartmap\colon\chartdomain\to\evalFun[\chartdomain]{\chartmap}\subseteq\R^{\dimMnf}$ is a homeomorphism.
For two charts $(\chartdomain,\chartmap)$ and $(\chartdomainAlt,\chartmapAlt)$ with $\chartdomain\cap\chartdomainAlt\neq\emptyset$, we can define the \emph{transition mappings}
\begin{align*}
	\chartmap \circ \inv\chartmapAlt \colon \evalFun[\chartdomain\cap\chartdomainAlt]{\chartmapAlt}\to \evalFun[\chartdomain\cap\chartdomainAlt]{\chartmap}
\qquad\text{and}\qquad
	\chartmapAlt \circ \inv\chartmap \colon \evalFun[\chartdomain\cap\chartdomainAlt]{\chartmap}\to\evalFun[\chartdomain\cap\chartdomainAlt]{\chartmapAlt},
\end{align*}
which are homeomorphisms as composition of homeomorphisms \Crefpoint{Fig:manifold}. 
\begin{figure}[ht]
	\centering
	\def\svgwidth{0.85\textwidth}
	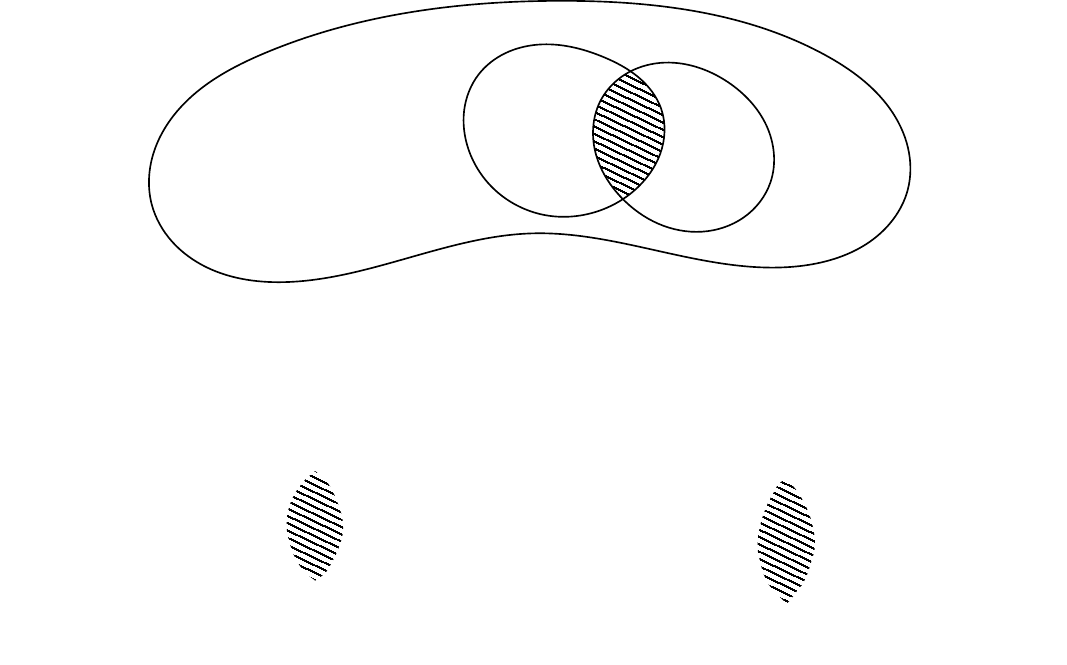
	\caption{Two intersecting chart domains $\chartdomain,\chartdomainAlt \subseteq \mnf$ with respective chart mappings $\chartmap, \chartmapAlt$ and transition mappings $\chartmap \circ \inv\chartmapAlt, \chartmapAlt \circ \inv\chartmap $ on $\evalFun[\chartdomain\cap\chartdomainAlt]{\chartmap}\subseteq\R^{\dimMnf}$ and $\evalFun[\chartdomain\cap\chartdomainAlt]{\chartmapAlt}\subseteq\R^{\dimMnf}$.}
	\label{Fig:manifold}
\end{figure}
The charts $\chart$ and $\chartAlt$ are called \emph{$\calC^k$-compatible} for $k \in \N$ or $k = \infty$ if either $\chartdomain\cap\chartdomainAlt=\emptyset$ or 
\begin{align*}
	\chartmap \circ \inv\chartmapAlt \in\calC^k(\evalFun[\chartdomain\cap\chartdomainAlt]{\chartmapAlt},\evalFun[\chartdomain\cap\chartdomainAlt]{\chartmap})\qquad\text{and}\qquad
	\chartmapAlt \circ \inv\chartmap \in\calC^k(\evalFun[\chartdomain\cap\chartdomainAlt]{\chartmap},\evalFun[\chartdomain\cap\chartdomainAlt]{\chartmapAlt}),
\end{align*}
where differentiability is defined in the classical sense since $\evalFun[\chartdomain\cap\chartdomainAlt]{\chartmap},\evalFun[\chartdomain\cap\chartdomainAlt]{\chartmapAlt}\subseteq\R^{\dimMnf}$.
A collection of charts $\calA = \{(\tname{\chartdomain}{i}, \tname{\chartmap}{i})\mid i\in I\}$ with some index set $I$ is called an \emph{atlas for $\mnf$} if $\mnf = \bigcup_{i\in I} \tname{\chartdomain}{i}$.
The atlas is called \emph{of class $\calC^k$} (or a \emph{$\calC^k$-atlas}) if all charts in $\calA$ are mutually $\calC^k$-compatible. We call a $\calC^k$-atlas $\calA$ \emph{maximal} if all charts that are $\calC^k$-compatible with any chart in $\calA$ are already elements of $\calA$. If $\calA$ is a maximal $\calC^k$ atlas for $\mnf$, then the tuple $(\mnf,\calA)$ is called a \emph{$\calC^k$-manifold} and, in particular, a \emph{smooth manifold} if $k=\infty$. As common in the literature, we omit the explicit mentioning of the maximal atlas whenever possible and say that $\mnf$ is a $\calC^k$-manifold, implicitly assuming a maximal $\calC^k$ atlas to be available.

\subsection{Diffeomorphism and partial derivative}
\label{subsec:functions}
Assume now that we have smooth manifolds $\mnf$ and $\mnfAlt$ of dimension $\dimMnf$ and $\dimMnfAlt$.
A mapping $F\colon\mnf\to\mnfAlt$ is called \emph{of class~$\calC^k$}, or in short notation $F\in\calC^k(\mnf,\mnfAlt)$, if for every~$\point\in\mnf$, there exist charts $\chart$ containing $\point$ and $\chartMnfAlt$ containing ${\evalFun{F}}$ such that $\chartmapMnfAlt\circ F\circ \inv\chartmap\in\calC^k(\evalFun[\chartdomain]{\chartmap},\evalFun[\chartdomainMnfAlt]{\chartmapMnfAlt})$ in the classical sense since $\evalFun[\chartdomain]{\chartmap} \subseteq \R^{\dimMnf}$ and $\evalFun[\chartdomainMnfAlt]{\chartmapMnfAlt} \subseteq \R^{\dimMnfAlt}$.
Note that due to the $\calC^k$-compatibility of the charts, this definition of differentiability does not depend on the choice of the chart.
Throughout the document, a \emph{smooth} mapping is synonymous with mappings of the class $\calC^\infty$.
We restrict ourselves in this work to smooth manifolds and smooth mappings to simplify the presentation.
A smooth bijective map $F \in \calC^\infty(\mnf, \mnfAlt)$ which has a smooth inverse is called a \emph{smooth diffeomorphism (from $\mnf$ to $\mnfAlt$)}.

For calculations, we formulate the derivative in the index notation \Crefpoint{sec:notation}. 
In more detail, we denote for $1 \leq i \leq \dimMnfAlt$ the \emph{$i$-th component function of the chart mapping} as~$\tidx{\chartmap}{i}{}\colon \chartdomain \to \R$
and the \emph{$i$-th component function (of $F$)} with $\tidx{F}{i}{} \vcentcolon= \tidx{\chartmapMnfAlt}{i}{} \circ F\colon \chartdomain \to \R$.
Then, the \emph{$i$-th partial derivative of the $j$-th component of $F \in \calC^1(\mnf, \mnfAlt)$ at $\point \in \mnf$ (in~$\chart$ and~$\chartAlt$)} is defined by
\begin{align}\label{eq:derivative}
	\evalField{\fracdiff{\tidx{F}{j}{}}{\tidx{\chartmap}{i}{}}}
\vcentcolon= \Big( \partial_i
	(\tidx{F}{j}{} \circ \inv\chartmap)
\Big)
(\evalFun[\point]{\chartmap})
\qquad
\text{ for }
	1 \leq i \leq \dimMnf,
\end{align}
where $\partial_i(\cdot)$ describes the $i$-th partial derivative of functions mapping between Euclidean vector spaces.
For scalar-valued functions $f \in \calC^1(\mnf, \R)$, we omit the index, i.e., $\tidx{f}{1}{} \equiv f$ and thus~$\evalField{\fracdiff{\tidx{f}{1}{}}{\tidx{\chartmap}{i}{}}}
= \evalField{\fracdiff{f}{\tidx{\chartmap}{i}{}}}$.
For the derivative of the chart mapping, we obtain
\begin{align}\label{eq:diff_chart_map}
	&\chartmap \in \calC^{\infty}(\mnf, \R^{\dimMnf})&
	&\text{with}&
	&\evalField{
		\fracdiff{
			\tidx{\chartmap}{j}{}
		}{
			\tidx{\chartmap}{i}{}
		}
	}
	=
	\partial_i
	\evalFun[
		\evalFun{\chartmap}
	]
	{
		(\tidx{\chartmap}{j}{} \circ \inv\chartmap)
	}
	= \tidx{\delta}{j}{i}
	\vcentcolon= \begin{cases}
		1,& i=j,\\
		0,& i\neq j,
	\end{cases}
\end{align}
due to $(\tidx{\chartmap}{j}{} \circ \inv\chartmap)
= \tidx{(\chartmap \circ \inv\chartmap)}{j}{}
= \tidx{(\id_{\R^{\dimMnf}})}{j}{}$.
The function $\tidx{\delta}{j}{i}$ is known as the \emph{Kronecker delta}.

\subsection{Tangent and tangent space}
\label{subsec:tangent_space}
Consider a smooth manifold $\mnf$ of dimension $\dimMnf$.
The tangent space of $\mnf$ can be defined in multiple alternative ways (see, e.g., \cite[Sec.~1.6]{Abraham1987} for an overview).
In the present work, we present the \emph{derivation approach}
and closely follow \cite[Sec.~1.7]{Bishop1968}.
For an arbitrary but fixed point $\point \in \mnf$,
we consider the set
\begin{align*}
	\Finfty \vcentcolon= \{ f \in \calC^{\infty}(\chartdomain, \R) \,|\, \chartdomain \subseteq \mnf \text{ open with } \point \in \chartdomain \}.
\end{align*}
Then, a \emph{tangent at $\point \in \mnf$} is a function on this set $\velocity\colon \Finfty \to \R$ which is (i)~linear and (ii)~fulfills the product rule, i.e., for every $f,g \in \Finfty$ and $a,b \in \R$, it holds%
\footnote{%
Note that for the sum/product of functions $f:U_f \to \R$ and $g:U_g \to R$ from $\Finfty$, the domain of the products/sums is shrinked to the intersection $U_f \cap U_g$, which is still open and $m \in U_f \cap U_g$ and thus $(f + g), (f \cdot g) \in \Finfty$.}
\begin{align*}
	&\text{(i) }
\evalFun[af + bg]{\velocity}
=
a \evalFun[f]{\velocity}
+
b \evalFun[g]{\velocity} \in \R,&
	&\text{(ii) }
\evalFun[f\cdot g]{\velocity}
= \evalFun[f]{\velocity} \cdot \evalFun{g}
+  \evalFun{f} \cdot \evalFun[g]{\velocity} \in \R.
\end{align*}
In a broader context, the properties (i) and (ii) define a \emph{derivation}.
The set of all tangents at $\point \in \mnf$
\begin{align} \label{eq:tangent_space}
	\TpMnf
\vcentcolon= \{
	\velocity\colon \Finfty \to \R
	\,|\,
	\velocity \text{ is a tangent at } \point
\}
\end{align}
defines the \emph{tangent space at $\point$}, which can be shown to be an $\dimMnf$-dimensional vector space. Thus, we also refer to elements $\velocity \in \TpMnf$ as \emph{tangent vectors} at $\point$. The $i$-th partial derivative \eqref{eq:derivative} of a scalar-valued function $f \in \Finfty$ can be used to define elements in $\TpMnf$,
\begin{align*}
	&\basisTanAt{i} \in \TpMnf&
&\text{with}&
&\basisTanAt{i}\colon \Finfty \to \R,\;
f \mapsto\evalField{\fracdiff{f}{\tidx{\chartmap}{i}{}}}.
\end{align*}
Moreover, $\left(\basisTanAt{1},\ldots,\basisTanAt{\dimMnf}\right)$ is an ordered basis of $\TpMnf$,
and, thus,
we can represent each tangent vector
\begin{align*}
	\velocity \in \Tp\mnf \qquad\text{with}\qquad\velocity = \tidx{\velocity}{i}{} \evalField{\basisTan{i}},
\end{align*}
where we refer to $\tidx{\velocity}{i}{} \in \R$, $1\leq i\leq \dimMnf$, as the \emph{components (of $\velocity$)}
and where we implicitly sum over $1 \leq i \leq \dimMnf$ by the Einstein summation convention \eqref{eq:einstein_summation_convention}.
Note that for this formalism to work, the index $i$ in the denominator of $\basisTanAt{i}$ counts as a lower index.
In the case of a vector space $\vectorspace$,
the tangent space $\Tp{\vectorspace}$ can be identified with the vector space $\Tp{\vectorspace} \cong \vectorspace$ for all $\point \in \vectorspace$ \cite[p.~59]{Lee12}, in particular $\Tp{\R^{k}} \cong \R^{k}$ for $k\in\N$.

\subsection{Differential and chain rule}
\label{subsec:differential}
Consider smooth manifolds $\mnf$, $\mnfAlt$, and $\mnfAltAlt$ of dimension $\dimMnf$, $\dimMnfAlt$, and $\dimMnfAltAlt$ with charts $\chart$, $\chartMnfAlt$, and $\chartMnfAltAlt$ and a point $\point \in \chartdomain$.
The \emph{differential of a smooth map $F \in \calC^{\infty} (\mnf, \mnfAlt)$ at $\point$} is a linear map
\begin{align}
\label{eq:diff}
	\evalField{\diff{F}} \in \calC^{\infty} (\TpMnf, \TpMnfAlt[{\evalFun{F}}]),
\quad
\tidx{\velocity}{i}{}\, \evalField{\basisTan{i}}
\mapsto
\tidx{\velocity}{i}{}\, \evalField{\fracdiff{\tidx{F}{j}{}}{\tidx{x}{i}{}}}
 \evalField[{\evalFun{F}}]{\basisTanMnfAlt{j}},
\end{align}
which maps between the respective tangent spaces using the $i$-th partial derivative \eqref{eq:derivative} of the $j$-th component function $\tidx{F}{j}{}$ of $F$,
where we sum over $1 \leq i \leq \dimMnf$ and $1 \leq j \leq \dimMnfAlt$ by the Einstein summation convention \eqref{eq:einstein_summation_convention}.
The \emph{chain rule} is an important property of the differential:
For two smooth mappings~$F \in \calC^\infty(\mnf, \mnfAlt)$, $G \in \calC^\infty(\mnfAlt, \mnfAltAlt)$, it holds
\begin{equation}
	\label{eq:chain_rule}
\evalField[\point]{\diff{(G \circ F)}}
	= \evalField[{\evalFun[\point]{F}}]{\diff{G}} \circ
\evalField[\point]{\diff{F}} \colon \TpMnf \to \Tp[{\evalFun[\point]{(G \circ F)}}]\mnfAltAlt.
\end{equation}
In respective charts $\chart$, $\chartMnfAlt$, $\chartMnfAltAlt$ with $\point \in \chartdomain$, $\evalFun[\point]{F} \in \chartdomainMnfAlt$ and $\evalFun[\point]{(G \circ F)} \in \chartdomainMnfAltAlt$, the chain rule reads
\begin{equation*}
\evalField[\point]{\fracdiff{
	\tidx{(G \circ F)}{i}{}
}{
	\tidx{\chartmap}{j}{}
}}
= \evalField[{\evalFun[\point]{F}}]{
	\fracdiff{
		\tidx{G}{i}{}
	}{
		\tidx{\chartmapMnfAlt}{k}{}
	}
}
\evalField[\point]{
	\fracdiff{
		\tidx{F}{k}{}
	}{
		\tidx{\chartmap}{j}{}
	}
}
\qquad
\text{for all}
\quad
\begin{cases}
	1 \leq j \leq \dimMnf,\\
	1 \leq i \leq \dimMnfAltAlt,
\end{cases}
\end{equation*}
where the right-hand side sums over $1 \leq k \leq \dimMnfAlt$ by the Einstein summation convention~\eqref{eq:einstein_summation_convention}.

\subsection{Tangent bundle and vector field}
\label{subsec:tangent_bundle_vector_field}
The \emph{tangent bundle} is the disjoint union of all tangent spaces
\begin{align} \label{eq:tangent_bundle}
	\T\mnf \vcentcolon= \disjointUnion_{\point \in \mnf} \Tp\mnf
\vcentcolon= \{ (\point, \velocity) \;\big|\; \point \in \mnf, \; \velocity \in \Tp\mnf \},
\end{align}
which bundles all points $\point \in \mnf$ and corresponding tangent vectors $\velocity \in \Tp\mnf$ in one set. The \emph{tangent bundle} itself is a smooth manifold of dimension $2\dimMnf$.
In the scope of the present work, we typically use $(\point, \velocity)\in \T\mnf$ to denote elements in $\T\mnf$.
Whenever we have a mapping into a tangent bundle, then we use the notation $(\cdot)\big|_{m}$ to denote the second part of the mapping.
For a given smooth mapping $F \in \calC^\infty(\mnf, \mnfAlt)$,
the \emph{differential (on the tangent bundle)}
\begin{equation}\label{eq:diff_tan_bundle}
	\diff{F} \in \calC^{\infty} (\T\mnf, \T\mnfAlt),\,
(\point, \velocity) \mapsto (\evalFun[\point]{F}, \evalField[\point]{\diff{F}}(\velocity))
\end{equation}
collects the differentials $\evalField[\point]{\diff{F}} \in \calC^{\infty}(\TpMnf, \TpMnfAlt[{{\evalFun[\point]{F}}}])$ at $\point$ for all points $\point \in \mnf$.
For a given chart $\chart$ of $\mnf$,
the differential of the chart mapping $\diff{\chartmap} \in \calC^{\infty} (\T\chartdomain, \T\R^{\dimMnf})$ defines a \emph{natural chart} of $\T\chartdomain$ by identifying $\T\R^{\dimMnf}$ with $\R^{2\dimMnf}$. It maps
\begin{align}\label{eq:chartmapTanSpace}
	\evalFun[\left(
		\point, \tidx{\velocity}{i}{}\, \basisTanAt{i}
	\right)]{\diff{\chartmap}}
= \left( \evalFun{\chartmap}, \stack{\tidx{\velocity}{i}{}}{1 \leq i \leq \dimMnf} \right) \in \R^{2\dimMnf}
\end{align}
since for a chart mapping%
\footnote{This chart mapping may seem redundant as $\chartmapAlt \equiv \id_{\R^\dimMnf}$.
However, we use it to illustrate how $\Tp[{\evalFun{\chartmap}}] \R^{\dimMnf}$ is identified with $\R^\dimMnf$ by using $\chartmapAlt$ to denote the basis vectors $
\evalField[{\evalFun{\chartmap}}]{
	\fracdiff{}{\tidx{\chartmapMnfAlt}{j}{}}
} \in \Tp[{\evalFun{\chartmap}}] \R^{\dimMnf}$.}
$\chartmapMnfAlt \in \calC^\infty (\R^\dimMnf, \R^\dimMnf)$ of $\R^{\dimMnf}$, it holds with \eqref{eq:diff_chart_map} and \eqref{eq:diff} that
\begin{align*}
	\evalFun[\tidx{\velocity}{i}{}\, \basisTanAt{i}]{\evalField{\diff \chartmap}}
= \tidx{\velocity}{i}{}\,
\evalField{
	\fracdiff{\tidx{\chartmap}{j}{}}{\tidx{\chartmap}{i}{}}
}
\evalField[{\evalFun{\chartmap}}]{
	\fracdiff{}{\tidx{\chartmapMnfAlt}{j}{}}
}
=
\tidx{\velocity}{i}{}\,
\tidx{\delta}{j}{i}\,
\evalField[{\evalFun{\chartmap}}]{
	\fracdiff{}{\tidx{\chartmapMnfAlt}{j}{}}
}
= \tidx{\velocity}{i}{}\, \evalField[{\evalFun{\chartmap}}]{
	\fracdiff{}{\tidx{\chartmapMnfAlt}{i}{}}
} \in \Tp[{{\evalFun{\chartmap}}}]{\R^\dimMnf},
\end{align*}
which we identify with $\stack{\tidx{\velocity}{i}{}}{1 \leq i \leq \dimMnf} \in \R^{\dimMnf}$.

Since $\mnf$ and $\T\mnf$ are both smooth manifolds,
we are able to define smooth mappings from $\mnf$ to $\T\mnf$ based on \Cref{subsec:functions}.
A \emph{smooth vector field} is a mapping $\vf \in \calC^{\infty} (\mnf, \T\mnf)$ with $\natProj \circ \vf= \id_{\mnf}$ with $\natProj\colon \T\mnf \to \mnf$, $(\point, \velocity) \mapsto \point$.
It assigns each point $\point \in \mnf$ an element $\evalFun{\vf} \vcentcolon= (\point, \evalField[\point]{\vf} ) \in \T\mnf$ in the tangent bundle,
where we denote the \emph{vector field at $\point \in \mnf$} with $\evalField[\point]{\vf} \in \TpMnf$.
The \emph{set of all smooth vector fields on $\mnf$} is denoted with $\smoothVfs$.

\subsection{Curve and initial value problem}
\label{subsec:curves_dyn_sys}
For a given smooth manifold $\mnf$ and an interval $\mnfTime\vcentcolon=(\pointTimeInit,\pointTimeEnd)$ with
$\pointTimeInit < \pointTimeEnd < \infty$, we call the mapping $\curve \in \calC^{\infty}(\mnfTime, \mnf)$ a \emph{smooth curve}.
We refer to elements $\pointTime \in \mnfTime$ as \emph{time points}.
By custom, we use for the derivative w.r.t.~time the notation $\ddt(\cdot)$.
The \emph{velocity of a curve $\curve$ at $\pointTime \in \mnfTime$} is
\begin{align*}
	\evalField[\pointTime]{\ddt \curve}
\vcentcolon= \left(
	\evalField[\pointTime]{
		\ddt \tidx{\curve}{i}{}
	}
\right)
\basisTanAt[{{\evalFun[\pointTime]{\curve}}}]{i}
\in \TpMnf[{{\evalFun[\pointTime]{\curve}}}],
\end{align*}
i.e., an element in the tangent space based on the (classical) derivative of the component functions $\tidx{\curve}{i}{}\colon \R \supseteq \mnfTime \to \R$ of the curve.\footnote{%
	Alternatively, the velocity of a curve can be understood in terms of the derivative introduced in \Cref{subsec:differential}
	with $\evalField[\pointTime]{\ddt \curve} = \evalField[\pointTime]{\diff \curve}$.
	In the presented notation, this would require to understand $\mnfTime$ as a smooth manifold with the chart $\chartMnfTime$,
	chart mapping $\chartmapMnfTime \equiv \id_{\mnfTime}\colon \mnfTime \to \R$
	and the derivative \smash{$\evalField[\pointTime]{\ddt \tidx{\curve}{i}{}} = \evalField[\pointTime]{\fracdiff{\tidx{\curve}{i}{}}{\tidx{\chartmapMnfTime}{1}{}}}$}.
}

For a smooth vector field $\vf \in \smoothVfs$, we call $\curve \in \calC^\infty(\mnfTime, \mnf)$ an \emph{integral curve of $\vf$} with \emph{initial value} $\pointInit \in \mnf$, if
\begin{equation}\label{eq:integral_curve}
	\left\{\quad\begin{aligned}
		\evalField[\pointTime]{\ddt \curve}
&= \evalField[{{\evalFun[\pointTime]{\curve}}}]{\vf}
\in \TpMnf[{{\evalFun[\pointTime]{\curve}}}], & t\in\mnfTime\\
	\evalFun[\pointTimeInit]{\curve} &= \pointInit \in \mnf.
\end{aligned}\right.
\end{equation}
We refer to \eqref{eq:integral_curve} as an \emph{initial value problem (on $\mnf$)}.
For a given chart $\chart$, the system~\eqref{eq:integral_curve} can be solved via an $\dimMnf$-dimensional initial value problem on $\R^{\dimMnf}$
\begin{align}
	\label{eq:dyn_sys_index_notation}
	&\evalField[\pointTime]{\ddt \tidx{\curve}{i}{}}
	= \tidx{\left(\evalField[{{\evalFun[\pointTime]{\curve}}}]{\vf}\right)}{i}{} \in \R
	\qquad \text{for } 1 \leq i \leq \dimMnf,&
	&\evalFun[\pointTimeInit]{\tidx{\curve}{i}{}}
= \evalFun[\pointInit]{\tidx{\chartmap}{i}{}} \in \R.
\end{align}
Due to the assumption of a smooth vector field,
we know that there exists a unique integral curve by the fundamental theorem on flows \cite[Thm.~9.12]{Lee12}, if the final time $\pointTimeEnd$ is small enough.
If we assume that there exists a time interval such that all integral curves exist for a set $\tname{M}{0} \subseteq \mnf$ with the starting points $\pointInit \in \tname{M}{0}$,
the \emph{flow of $\vf$} can be defined as
\begin{align*}
	\flowAt[\pointTime]\colon \tname{M}{0} \to \mnf,\quad
\pointInit \mapsto \evalFun[\pointTime; \pointInit]{\curve}.
\end{align*}

\subsection{Bold notation}
\label{subsec:bold_notation}
We introduce a notation that collects all previously introduced types of differential geometric objects (like points, functions, tangent vectors) in a fixed chart
and thereby reduces to computations in $\R$-vector spaces.
We refer to this notation as the \emph{bold notation}\footnote{Be aware that bold symbols may be used for other purposes in other scripts on differential geometry.}.
For a given smooth manifold $\mnf$ with a chart $\chart$,
we use
\begin{align*}
	&\chartmap \in \calC^{\infty} (\chartdomain, \R^{\dimMnf}),&
	&\evalField{\diff \chartmap} \in \calC^{\infty} (\Tp{\chartdomain}, \R^{\dimMnf}) \quad \text{with } \point \in \chartdomain,&
	&\diff \chartmap \in \calC^{\infty} (\T\chartdomain, \R^{2\dimMnf})&
\end{align*}
to map the different types of objects accordingly,
where we identify $\Tp[{{\evalFun{\chartmap}}}]\R^{\dimMnf}$ with $\R^{\dimMnf}$ for $\evalField{\diff \chartmap}$ and $\T\R^{\dimMnf}$ with $\R^{2\dimMnf}$ for ${\diff \chartmap}$.
Let us state clearly that (i) this formulation loses geometrical information
(as it treats different types of objects as a vector in $\R^\dimMnf$)
and (ii) it only works for one fixed chart (since the explicit dependence on the chart is neglected).
However, this formulation can be helpful for readers new to the field of differential geometry with more background in classical numerical analysis and engineering.
The notation for the different types of differential geometric objects for two smooth manifolds $\mnf, \mnfAlt$ with charts $\chart$, $\chartAlt$, respectively, are summarized in \Cref{tab:translation_bold_notation}.

\begin{table}
	\centering
	\caption{Bold notation for two smooth manifolds $\mnf,\mnfAlt$ with charts $\chart, \chartAlt$, respectively.}
	\label{tab:translation_bold_notation}
	\small
	\begin{tabular}{lll}
		\toprule
		\textbf{type}
& \textbf{element}
& \textbf{bold notation}\\
		\midrule
		point
& $\point \in \chartdomain \subseteq \mnf$
& $\pointCoord \vcentcolon= \evalFun{\chartmap} \in \R^{\dimMnf}$\\
		mapping
& $F \in \calC^{\infty} (\chartdomain, \chartdomainAlt)$
& $\fF \vcentcolon= \chartmapAlt \circ F \circ \inv\chartmap: \R^\dimMnf \supseteq \chartmap(\chartdomain) \to \chartmapAlt(\chartdomainAlt) \subseteq \R^{\dimMnfAlt}$\\
		tangent vector
& $\velocity = \tidx{\velocity}{i}{} \basisTanAt{i} \in \Tp{\chartdomain}$
& $\velocityCoord
\vcentcolon= \stack{\tidx{\velocity}{i}{}}{1\leq i\leq \dimMnf} \in \R^{\dimMnf}$\\
		Jacobian matrix\tablefootnote{%
To be more precise, the Jacobian matrix is the coordinate matrix of the linear mapping described by the differential in coordinates $
\evalField[{{\evalFun{F}}}]{\diff \chartmapMnfAlt}
\circ
\evalField{\diff{F}}
\circ
\inv{\evalField{\diff \chartmap}}:
\R^{\dimMnf} \to \R^{\dimMnfAlt}
$.
Moreover, we use in the last column of this row a notation for stacking scalars as matrices similarly to stacking scalars as vectors from \Cref{sec:notation}.
}
& $\evalField{\diff{F}} \in \calC^{\infty} (\Tp{\chartdomain}, \Tp[{\evalFun{F}}]{\chartdomainMnfAlt})$
& $	\evalField[\pointCoord]{\D \fF} \vcentcolon=
\stack{
	\evalField{\fracdiff{\tidx{F}{i}{}}{\tidx{\chartmap}{j}{}}}
}{\substack{
	1\leq i \leq \dimMnfAlt,\\
	1\leq j \leq \dimMnf
}}
\in \R^{\dimMnfAlt \times \dimMnf}$\\
		\midrule
		dynamical system
& $ \left\{\quad\begin{aligned}
	\evalField[\pointTime]{\ddt \curve} &= \evalField[{{\evalFun[\pointTime]{\curve}}}]{\vf} \in \Tp[{{\evalFun[\pointTime]{\curve}}}]{\chartdomain},\\
	\curve(\pointTimeInit) &= \pointInit \in \mnf
\end{aligned}\right.
$
& $ \left\{\quad\begin{aligned}
	\evalField[\pointTime]{\ddt \curveCoord} &= \evalField[{{\evalFun[\pointTime]{\curveCoord}}}]{\vfCoord} \in \R^{\dimMnf},\\
	\curveCoord(\pointTimeInit) &= \pointInitCoord \in \R^{\dimMnf}
\end{aligned}\right.
$\\
		\bottomrule
	\end{tabular}
\end{table}

\subsection{Embedding and embedded submanifold}
\label{subsec:submnf}
Consider two smooth manifolds~$\mnfRed$ and~$\mnf$ of dimension $\dimMnfRed$ and $\dimMnf$, respectively.
A smooth mapping $F \in \calC^{\infty}(\mnfRed, \mnf)$ is called an \emph{immersion} if the respective differential $\evalField[\pointRed]{\diff{F}}\colon \TpMnfRed \to \TpMnf[{{\evalFun[\pointRed]{F}}}]$ is injective at each point $\pointRed \in \mnfRed$.
Moreover, $F$ is called a \emph{smooth embedding} if it is a smooth immersion and a homeomorphism onto its image $\evalFun[\smash{\mnfRed}]{F} \subseteq \mnf$.
For a given smooth embedding $\emb \in \calC^{\infty}(\mnfRed, \mnf)$, the image $\mnfSub$ is an $\dimMnfRed$-dimensional smooth manifold, which is called an \emph{embedded (or regular) submanifold} of $\mnf$.
We denote the tangent space of $\mnfSub$ at $\evalFun[\pointRed]{\emb}$ with $\TpMnfSub \vcentcolon= \evalFun[\smash{\TpMnfRed}]{\evalField[\pointRed]{\diff{\emb}}}$.
From the assumptions, it follows automatically that the embedding $\emb$ is a smooth diffeomorphism onto its image \cite[Prop.~5.2]{Lee12}.

\begin{lemma}\label{lem:emb_subspaces_from_point_red}
	Consider smooth manifolds $\mnfRed, \mnf$ and smooth mappings $\emb \in \calC^{\infty}(\mnfRed, \mnf)$
	and $\redMapPoint \in \calC^{\infty}(\mnf, \mnfRed)$
	with $\redMapPoint \circ \emb \equiv \id_{\mnfRed}$.
	Then, $\emb$ is a smooth embedding and $\mnfSub \subseteq \mnf$ is an embedded submanifold.
\end{lemma}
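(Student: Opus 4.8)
The plan is to exploit the hypothesis $\redMapPoint \circ \emb \equiv \id_{\mnfRed}$, which says that $\redMapPoint$ is a smooth left inverse (a retraction) of $\emb$. This single assumption supplies everything needed: differentiating it yields injectivity of the differential (hence the immersion property), while the map-level identity yields both injectivity of $\emb$ and a continuous inverse on the image (hence the topological embedding property). Once $\emb$ is shown to be a smooth embedding, the second assertion is immediate from the characterization of embedded submanifolds stated immediately before the lemma.

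First I would establish that $\emb$ is an immersion. Fix an arbitrary $\pointRed \in \mnfRed$. Since the differential of the identity map is the identity on the tangent space, applying the chain rule \eqref{eq:chain_rule} to $\redMapPoint \circ \emb$ gives
\begin{equation*}
	\id_{\TpMnfRed}
	= \evalField[\pointRed]{\diff{(\redMapPoint \circ \emb)}}
	= \evalField[{\evalFun[\pointRed]{\emb}}]{\diff{\redMapPoint}} \circ \evalField[\pointRed]{\diff{\emb}}.
\end{equation*}
Thus the linear map $\evalField[\pointRed]{\diff{\emb}}$ admits a left inverse and is therefore injective at every $\pointRed \in \mnfRed$, which is exactly the definition of an immersion.

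Next I would verify that $\emb$ is a homeomorphism onto its image $\mnfSub$ carrying the subspace topology. Injectivity of $\emb$ is immediate: if $\evalFun[\pointRed]{\emb} = \evalFun[\pointRedAlt]{\emb}$, then applying $\redMapPoint$ and using $\redMapPoint \circ \emb = \id_{\mnfRed}$ yields $\pointRed = \pointRedAlt$. Hence $\emb\colon \mnfRed \to \mnfSub$ is a continuous bijection whose inverse is precisely the restriction $\restrict{\redMapPoint}{\mnfSub}$, since $\evalFun[{\evalFun[\pointRed]{\emb}}]{\redMapPoint} = \pointRed$ for all $\pointRed \in \mnfRed$. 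As $\redMapPoint$ is smooth and therefore continuous, its restriction to $\mnfSub$ is continuous, so $\inv{\emb}$ is continuous and $\emb$ is a homeomorphism onto its image.

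Combining the two parts, $\emb$ is a smooth immersion that is a homeomorphism onto its image, i.e., a smooth embedding by definition, whence $\mnfSub$ is an embedded submanifold of $\mnf$. I expect the only genuinely nontrivial step to be the homeomorphism-onto-image property: an arbitrary injective immersion need not be an embedding (as the usual figure-eight example shows), and it is exactly the continuity of the left inverse $\redMapPoint$ that excludes such pathologies here.
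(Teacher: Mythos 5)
Your proof is correct. It takes a somewhat different route from the paper's: the paper restricts both maps to the image $\mnfSub$ equipped with the subspace topology, observes that $\restrict{\redMapPoint}{\mnfSub}$ is a smooth inverse of $\emb\colon\mnfRed\to\mnfSub$, concludes that $\emb$ is a smooth diffeomorphism onto its image, and then cites \cite[Prop.~4.8(a)]{Lee12} to obtain the immersion (and hence embedding) property. You instead verify the two defining conditions of a smooth embedding directly: the immersion property falls out of differentiating $\redMapPoint\circ\emb=\id_{\mnfRed}$ with the chain rule~\eqref{eq:chain_rule}, and the homeomorphism-onto-image property comes from the same identity at the map level, since the continuous restriction $\restrict{\redMapPoint}{\mnfSub}$ serves as $\inv{\emb}$. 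Your version is self-contained (no external proposition beyond the chain rule already stated in \Cref{subsec:differential}) and sidesteps the need to regard $\mnfSub$ as a smooth manifold before the embedding property has been established, at the cost of being slightly longer; the paper's version is more compact but leans on the cited result and on treating $\mnfSub$ as a smooth object from the outset. Your closing remark correctly identifies where the real content lies: an injective immersion alone is not an embedding, and it is precisely the continuity of the global left inverse $\redMapPoint$ that rules out figure-eight-type pathologies.
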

\begin{proof}
	\Crefpoint{appx:proof_emb_subspaces_from_point_red}.
\end{proof}

\section{Model order reduction on manifolds}
\label{sec:mor_on_mnf}
With the geometric sundries at hand, we can now introduce \emph{model order reduction on manifolds.} We start with the general framework for model order reduction \Crefpoint{sec:gen_frame}. Then, we detail conditions such that exact reproduction can be achieved \Crefpoint{subsec:exact_reproduction}. Finally, we present an example fitting this framework, the so-called manifold Petrov--Galerkin \Crefpoint{subsec:MPG}. 

\subsection{General framework}\label{sec:gen_frame}
This section sits at the heart of this paper and introduces the general framework upon which the remainder is built. We start this section by defining the \FOM on manifolds \Crefpoint{subsec:gen_FOM}. We then focus on the goal that \MOR strives to achieve and what assumptions are required to reach this goal \Crefpoint{subsubsec:goal_MOR}. Subsequently, we define the reduction map, which is needed to define the reduced-order model \Crefpoint{subsubsec:reduction_map_ROM}. We conclude the general framework with a workflow for \MOR on manifolds \Crefpoint{subsubsec:MORworkflow}.

\subsubsection{Full-order model} \label{subsec:gen_FOM}
In the scope of the present work, we consider high-dimensional parametric initial value problems. More precisely, assume that we are given a time interval $\mnfTime \vcentcolon= (\pointTimeInit,\pointTimeEnd)$ with initial time $\pointTimeInit$ and final time $\pointTimeEnd>\pointTimeInit$, a parameter set $\paramSet \subseteq \R^{p}$, an $\dimMnf$-dimensional smooth manifold $\mnf$ with large $\dimMnf$,
a (possibly parametric) smooth vector field $\vf\colon\paramSet\to\smoothVfs$, and a (possibly parametric) initial value $\pointInit\colon\paramSet\to\mnf$. We consider for $\param\in\paramSet$ the initial value problem
\begin{equation}
	\label{eq:FOM}
	\left\{\quad\begin{aligned}
		\evalField[\pointTime; \param]{\ddt \curve}
		&= \evalField[{
		\evalFun[\pointTime; \param]{\curve}
	}]{\vf(\param)} \in \TpMnf[{{\evalFun[\pointTime; \param]{\curve}}}], & t\in\mnfTime\\
		\evalFun[\pointTimeInit; \param]{\curve}
		&= \pointInit(\param) \in \mnf,
		\end{aligned}\right.
\end{equation}
which we want to solve for the integral curve $\evalFun[\cdot; \param]{\curve} \in \calC^{\infty}(\mnfTime, \mnf)$. We refer to~\eqref{eq:FOM} as the \FOM and to $\evalFun[\param]{\vf}$ as the \emph{\FOM vector field}. 

\begin{remark}[Parameter dependency]
	In the following, we may suppress the explicit notation of the parameter dependence for the sake of brevity. This is possible since the parameter is fixed for each \FOM evaluation. We indicate the parameter dependence only if it is relevant in a specific context.
\end{remark}

\subsubsection{Goal of model order reduction} \label{subsubsec:goal_MOR}
The goal of \MOR can be formulated as to be able to well-approximate the \emph{set of all solutions}
\begin{align}\label{eq:solution_manifold}
	\solMnf \vcentcolon=
\left\{
\evalFun[\pointTime; \param]{\curve} \in \mnf
\mid
(\pointTime, \param) \in \mnfTime \times \paramSet
\right\}
\subseteq \mnf
\end{align}
computationally efficiently.
Sometimes, the set of all solutions is referred to as the \emph{solution manifold}. However, this set does not necessarily have the structure of a manifold.
For example, \cite[Ex.~2.9]{Haasdonk2017} describes a case where the solution might be arbitrarily complex in the parameter (including discontinuous behavior).
The crucial assumption for \MOR to be reasonable is the following.

\begin{assumption}
	\label{ass:mor}
	Given a metric $\metricMnfSymbol\colon \mnf \times \mnf \to \Rpos$, we assume that there exists a low-dimensional embedded submanifold $\mnfSub \subseteq \mnf$ defined by an $\dimMnfRed$-dimensional manifold $\mnfRed$ and a smooth embedding $\emb \in \calC^{\infty}(\mnfRed, \mnf)$ with $\dim(\mnfRed) = \dimMnfRed \ll \dimMnf = \dim(\mnf)$ such that the set of solutions $\solMnf$ can be approximated well, i.e., 
	\begin{displaymath}
		\metricMnfSymbol(\emb(\mnfRed),\solMnf) \vcentcolon= \sup_{\point\in\solMnf} \inf_{\pointRed\in\mnfRed} \metricMnfSymbol(\point,\emb(\pointRed))
	\end{displaymath}		
	is small.
\end{assumption}

We refer to $\mnf$ as the \emph{full(-order) manifold} and to $\mnfRed$ as the \emph{reduced(-order) manifold}. Let us emphasize that the goal is to approximate the set $\solMnf \subseteq \mnf$, not the full manifold $\mnf$. We refer to \Cref{fig:mnf_solMnf_mnfSub} for a schematic illustration of the relation between the full manifold $\mnf$, the set of all solutions $\solMnf$, and the approximating embedded submanifold $\mnfSub$.

\begin{figure}[ht]
\begingroup%
  \makeatletter%
  \providecommand\color[2][]{%
    \errmessage{(Inkscape) Color is used for the text in Inkscape, but the package 'color.sty' is not loaded}%
    \renewcommand\color[2][]{}%
  }%
  \providecommand\transparent[1]{%
    \errmessage{(Inkscape) Transparency is used (non-zero) for the text in Inkscape, but the package 'transparent.sty' is not loaded}%
    \renewcommand\transparent[1]{}%
  }%
  \providecommand\rotatebox[2]{#2}%
  \newcommand*\fsize{\dimexpr\f@size pt\relax}%
  \newcommand*\lineheight[1]{\fontsize{\fsize}{#1\fsize}\selectfont}%
  \ifx\svgwidth\undefined%
    \setlength{\unitlength}{146.15090429bp}%
    \ifx\svgscale\undefined%
      \relax%
    \else%
      \setlength{\unitlength}{\unitlength * \real{\svgscale}}%
    \fi%
  \else%
    \setlength{\unitlength}{\svgwidth}%
  \fi%
  \global\let\svgwidth\undefined%
  \global\let\svgscale\undefined%
  \makeatother%
  \begin{picture}(1,0.37946306)%
    \lineheight{1}%
    \setlength\tabcolsep{0pt}%
    \put(0,0){\includegraphics[width=\unitlength,page=1]{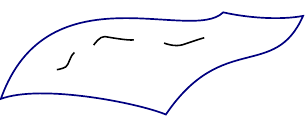}}%
    \put(0.26460113,0.15849965){\color[rgb]{1,0,0}\makebox(0,0)[lt]{\lineheight{1.25}\smash{\begin{tabular}[t]{l}$\mnfSub$\end{tabular}}}}%
    \put(0.29032945,0.34438394){\color[rgb]{0,0,0.50196078}\makebox(0,0)[lt]{\lineheight{1.25}\smash{\begin{tabular}[t]{l}$\mnf$\end{tabular}}}}%
    \put(0.50886509,0.15849965){\makebox(0,0)[lt]{\lineheight{1.25}\smash{\begin{tabular}[t]{l}$\solMnf$\end{tabular}}}}%
    \put(0,0){\includegraphics[width=\unitlength,page=2]{figures/differential_geometry/submanifold_w_solMnf.pdf}}%
  \end{picture}%
\endgroup%

	\caption{Schematic illustration of the full manifold $\mnf$ (dark blue), the set of all solutions $\solMnf$ (black), and the approximating embedded submanifold $\mnfSub$ (red).
	The set of solutions is schematically depicted as three separate trajectories that may occur due to a possible discontinuous behavior in the parameter $\param$.}
	\label{fig:mnf_solMnf_mnfSub}
\end{figure}

\subsubsection{Reduction map and reduced-order model} \label{subsubsec:reduction_map_ROM}
Assume that we have identified an $\dimMnfRed$-dimensional embedded submanifold $\mnfSub\subseteq \mnf$ with smooth embedding $\emb \in \calC^{\infty}(\mnfRed, \mnf)$ and that \Cref{ass:mor} is satisfied. To find a \ROM, we want to replace $\evalFun[\pointTime]{\curve}$ in~\eqref{eq:FOM} with the approximation~$
\evalFun[{\evalFun[\pointTime]{\curveRed}}]{\emb}$ based on a reduced integral curve $\curveRed\in \calC^{\infty}(\mnfTime, \mnfRed)$. Note that even if we would have an exact reproduction, i.e., $\evalFun[\pointTime]{\curve} = \evalFun[{\evalFun[\pointTime]{\curveRed}}]{\emb}$ for all $\pointTime\in\mnfTime$, the initial value problem~\eqref{eq:FOM} in the reduced integral curve $\curveRed$ would be overdetermined, in the sense that we have (locally in each chart)~$\dimMnf$ equations for~$\dimMnfRed$ unknowns. Thus, we must also reduce the initial value problem and give the following definition.

\begin{definition}[Reduction map]
	\label{def:reductionMapping}
	A map $\redMap \in \calC^{\infty}(\T\mnf, \T\mnfRed)$ is called \emph{reduction map} for a smooth embedding~$\emb \in \calC^{\infty}(\mnfRed, \mnf)$ if it satisfies the \emph{projection property}
	\begin{align}\label{eq:projProperty}
		\redMap \circ \diff{\emb} = \id_{\T\mnfRed}.
	\end{align}
	As in \Cref{subsec:tangent_bundle_vector_field}, we split the reduction map 
\begin{align*}
	\redMap \in \calC^{\infty}(\T\mnf, \T\mnfRed),\qquad (\point, \velocity) \mapsto \left(
            \evalFun[\point]{\redMapPoint},
            \evalFun[\velocity]{\redMapTan{\point}}
        \right)
\end{align*}
with $\redMapPoint \in \calC^{\infty}(\mnf, \mnfRed)$ and $\redMapTan{\point} \in \calC^{\infty} ( \TpMnf[\point], \TpMnfRed[\redMapPoint(m)])$ for $\point\in\mnf$.
We refer to $\redMapPoint$ as a \emph{point reduction} for $\emb$ and to $\redMapTan{\point}$ as a \emph{tangent reduction} for $\emb$. 
\end{definition}

Note that~\eqref{eq:projProperty} immediately implies that $\diff{\emb} \circ \redMap \in \calC^{\infty} ( \T\mnf, \T\mnfSub )$ is idempotent and thus a projection. Moreover,~\eqref{eq:projProperty} implies that a point reduction and a tangent reduction for $\emb$ satisfy
\begin{subequations}
	\label{eq:projProperty:split}
	\begin{align}
		\label{eq:projProperty:pointProjection}\redMapPoint \circ \emb &= \id_{\mnfRed},\\
		\label{eq:projProperty:tangentProjection}\redMapTan{\pointSub} \circ \evalField[\pointRed]{\diff{\emb}} &= \id_{\TpMnfRed} \qquad \text{for all } \pointRed \in \mnfRed,
	\end{align}
\end{subequations}
which we refer to as the \emph{point projection property} and the \emph{tangent projection property}, respectively. The relation between the embedding $\emb$ and the reduction map $\redMap$ is illustrated in \Cref{fig:redMapping}.

\begin{figure}[ht]
	\centering
	\begin{subfigure}{.85\linewidth}
		\centering
		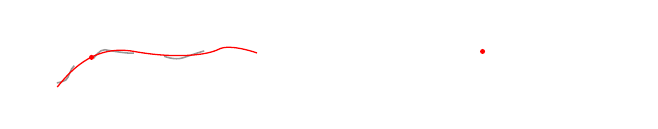
		\caption{Schematic illustration of the relation of the embedding $\emb$ and the point reduction $\redMapPoint$.}
		\label{fig:red_emb}
	\end{subfigure}\\[.3em]
	\begin{subfigure}{.85\linewidth}
		\centering
		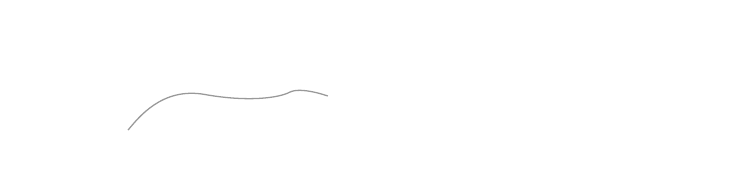
		\caption{Schematic illustration of the relation of the tangent spaces involved in \MOR on manifolds. The reduced tangent space $\TpMnfRed$ is displayed orthogonally to $\mnfRed$ for a better visualization.}
		\label{fig:submnf_tan_space}
	\end{subfigure}\\[-.7em]
	\caption{Schematic illustration of the relation between the embedding $\emb$ and the reduction map $\evalFun[\point, \velocity]{\redMap}= \left(\evalFun[\point]{\redMapPoint},\evalFun[\velocity]{\redMapTan{\point}}\right)$ with $\point\in\mnf$.}
	\label{fig:redMapping}
\end{figure}

\begin{example}[Linear-subspace MOR]\label{rem:subspace_MOR}
	Projection-based linear-subspace \MOR with a reduced-basis matrix $\fV \in \R^{\dimMnf \times \dimMnfRed}$ and a projection matrix $\fW \in \R^{\dimMnf \times \dimMnfRed}$ is contained as a special case of the presented formulation with $\mnf = \chartdomain = \R^{\dimMnf}$, $\mnfRed = \chartdomainRed = \R^{\dimMnfRed}$, $\chartmap = \id_{\R^{\dimMnf}}$, $\chartmapRed = \id_{\R^{\dimMnfRed}}$ and
	\begin{align*}
		&\evalFun[\pointCoord]{\redCoord}
\vcentcolon= \rT\fW \pointCoord,&
		&\evalField[\pointCoord]{\redMapCoord}(\velocityCoord)
\vcentcolon=  \rT\fW \fv,
		&{\pointSubCoord} \vcentcolon= \fV \pointRedCoord.
	\end{align*}
	This exactly covers the case where $\embCoord$ and $\redCoord$ are linear.
	The projection property \eqref{eq:projProperty} then relates to the \emph{biorthogonality} of $\fW$ and $\fV$
	\begin{align*}
		\redCoord \circ \embCoord \equiv&\; \id_{\R^{\dimMnfRed}}&
		&\iff&
&\rT\fW \fV = \tname{\fI}{\dimMnfRed} \in \R^{\dimMnfRed \times \dimMnfRed},\\
		\evalField[{{\evalFun[\pointRedCoord]{\embCoord}}}]{\redMapCoord}
\circ \evalField[\pointRedCoord]{\diff{\embCoord}} \equiv&\; \id_{\R^{\dimMnfRed}}&
&\iff&
&\rT\fW \fV = \tname{\fI}{\dimMnfRed} \in \R^{\dimMnfRed \times \dimMnfRed},
	\end{align*}
	which is often assumed in linear-subspace \MOR.
\end{example}

\begin{definition}[Reduced-order model]
	Consider the \FOM~\eqref{eq:FOM}, a smooth embedding $\emb \in \calC^{\infty}(\mnfRed, \mnf)$, and a reduction map $\redMap \in \calC^{\infty}(\T\mnf, \T\mnfRed)$ for $\emb$ with point and tangent reduction for $\emb$ given by $\evalFun[\point, \velocity]{\redMap}= \left(\evalFun[\point]{\redMapPoint},\evalFun[\velocity]{\redMapTan{\point}}\right)$.
	We define the \emph{\ROM vector field} as
	$\vfRed\colon \paramSet \to \smoothVfsRed$ via
	\begin{align*}
		\evalField[\pointRed]{\vfRed(\param)}
\vcentcolon= \evalFun[{\evalField[\pointSub]{\vf(\param)}}]{\redMapTan{\pointSub}}\in \TpMnfRed.
	\end{align*}
	Then, for $\param\in\paramSet$, we call the initial value problem on $\mnfRed$
	\begin{equation}
		\label{eq:rom}
		\left\{\quad\begin{aligned}
			\evalField[\pointTime; \param]{\ddt \curveRed} 
			&= \evalField[{
				\evalFun[\pointTime; \param]{\curveRed}
			}]{\vfRed(\param)} \in \TpMnfRed[{{\evalFun[\pointTime; \param]{\curveRed}}}]\\
		\evalFun[\pointTimeInit; \param]{\curveRed}
		&= \reduce{\curve}_0(\param) \vcentcolon= \evalFun[{\evalFun[\param]{\pointInit}}]{\redMapPoint} \in \mnfRed
		\end{aligned}\right.
	\end{equation}
	the \emph{\ROM} for~\eqref{eq:FOM} under the reduction map $\redMap$ with solution $\curveRed(\cdot; \param) \in \calC^\infty (\mnfTime, \mnfRed)$.
\end{definition}

We emphasize that both, the point and the tangent reduction, are relevant for the \ROM, since the point reduction is used to map the initial value $\pointInit$, while the tangent reduction maps the \FOM vector field to the tangent space of the reduced manifold $\mnfRed$.
Moreover, we see that it is not sufficient to define $\redMapPoint$ and $\smash{\redMapTan{\pointSub}}$ only in the image of $\emb$ and $\smash{\evalField[\pointRed]{\diff{\emb}}}$, respectively, since the initial value and the evaluated \FOM vector field may be elements of $\mnf\setminus\evalFun[\smash{\mnfRed}]{\emb}$ and~$\TpMnf[\pointSub] \setminus \TpMnfSub$, respectively.

\subsubsection{MOR workflow}
\label{subsubsec:MORworkflow}

With \Cref{ass:mor} at hand, \MOR (in the scope of this work) can be summarized in three steps:
\begin{enumerate}
	\item \stepApproximation: Given the \FOM~\eqref{eq:FOM}, find a reduced manifold $\mnfRed$ and a smooth embedding $\emb \in \calC^{\infty} (\mnfRed, \mnf)$ such that
$\metricMnfSymbol(\emb(\mnfRed),\solMnf)$ is small.
	\item \stepReduction: Identify a reduction map $\redMap \in \calC^{\infty}(\T\mnf, \T\mnfRed)$ for $\emb$ and construct the \ROM~\eqref{eq:rom}.
	\item \stepReconstruction: Solve the \ROM~\eqref{eq:rom} for ${\curveRed}$ and approximate the \FOM solution curve $\curve$ with
		\begin{equation}
			\label{eqn:solutionCurveApproximation}
			\evalFun[\pointTime; \param]{\curve}
\approx
\evalFun[{
	\evalFun[\pointTime; \param]{\curveRed}
}]{\emb}
\qquad
\text{for } (\pointTime, \param) \in \mnfTime \times \paramSet.
		\end{equation}
\end{enumerate}

In the remainder of the manuscript, we discuss all three steps, starting with the \stepReconstruction~step in the subsequent subsection. Possible constructions of the reduction map in the \stepReduction~step are discussed in \Cref{subsec:MPG,sec:structurePreservation}. The construction of the embedding $\emb$ in the \stepApproximation~step is analyzed in a data-driven framework in \Cref{sec:embedding_generation}.

\subsection{Exact reproduction}\label{subsec:exact_reproduction}

A desirable property in the \stepReconstruction~step is to answer the question when the approximation in~\eqref{eqn:solutionCurveApproximation} is exact, which we refer to as \emph{exact reproduction}. 
Clearly, if for a given parameter $\param\in\paramSet$, the \FOM solution $\curve$ evolves on $\emb(\mnfRed)$, i.e., $\evalFun[t; \param]{\curve}\in\emb(\mnfRed)$ for all $\pointTime \in \mnfTime$, then we can define the smooth curve
\begin{equation}
	\label{eqn:curveRedAlt}
	\curveRedAlt \vcentcolon= \emb^{-1}(\evalFun[\cdot;\param]{\curve}) \in \calC^\infty (\mnfTime,\mnfRed),
\end{equation}
since, by assumption, $\emb$ is a diffeomorphism onto its image. With this choice, we immediately obtain
\begin{align}\label{eq:exact_reproduction_tangent}
		\evalField[{
			\evalFun[{
				\evalFun[\pointTime; \param]{\curveRedAlt}
			}]{\emb}
		}]{\vf(\param)}
		= \evalField[{
	\evalFun[\pointTime; \param]{\curve}
}]{\vf(\param)}
		= \evalField[\pointTime; \param]{\ddt \curve}
		= \evalField[\pointTime; \param]{\ddt (\emb \circ \beta)}
		= \evalFun[{
	\evalField[\pointTime; \param]{\ddt \curveRedAlt}
}]{
	\evalField[{
		\evalFun[\pointTime; \param]{\curveRedAlt}
	}]{\diff\emb}
},
\end{align}
where the last equality follows from the chain rule~\eqref{eq:chain_rule}. 
It remains to prove that the \ROM~\eqref{eq:rom} is able to recover the reduced curve $\curveRedAlt$, which we show in the following.

\begin{theorem}[Exact reproduction of a solution]\label{thm:exact_repro}
	Assume that the \FOM~\eqref{eq:FOM} is uniquely solvable 
	and consider a reduction map $\redMap \in \calC^{\infty}(\T\mnf, \T\mnfRed)$ for the smooth embedding $\emb \in \calC^{\infty} (\mnfRed, \mnf)$ and a parameter $\param\in\paramSet$.
	Assume that the \ROM~\eqref{eq:rom} is uniquely solvable and $\evalFun[\pointTime;\param]{\curve}\in\emb(\mnfRed)$ for all $\pointTime\in\mnfTime$. Then the \ROM solution $\evalFun[\cdot; \param]{\curveRed}$ exactly recovers the solution $\evalFun[\cdot; \param]{\curve}$ of the \FOM~\eqref{eq:FOM} for this parameter, i.e.,
	\begin{align}\label{eq:exact_repro}
		&\evalFun[{\evalFun[\pointTime; \param]{\curveRed}}]{\emb}
=  \evalFun[\pointTime; \param]{\curve}&
& \text{for all } \pointTime \in \mnfTime.
	\end{align}
\end{theorem}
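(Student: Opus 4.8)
The plan is to reuse the auxiliary reduced curve $\curveRedAlt = \emb^{-1}(\evalFun[\cdot;\param]{\curve})$ introduced in~\eqref{eqn:curveRedAlt}, which is well-defined and smooth precisely because $\evalFun[\pointTime;\param]{\curve}\in\emb(\mnfRed)$ for all $\pointTime\in\mnfTime$ and $\emb$ is a diffeomorphism onto its image. The idea is to show that $\curveRedAlt$ solves the \ROM~\eqref{eq:rom}; the assumed unique solvability of the \ROM then forces $\evalFun[\cdot;\param]{\curveRed}=\curveRedAlt$, and composing with $\emb$ yields $\emb\circ\curveRed = \emb\circ\curveRedAlt = \evalFun[\cdot;\param]{\curve}$, which is exactly~\eqref{eq:exact_repro}.

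First I would match the initial conditions. Evaluating at $\pointTimeInit$ gives $\evalFun[\pointTimeInit]{\curveRedAlt} = \emb^{-1}(\evalFun[\pointTimeInit;\param]{\curve}) = \emb^{-1}(\evalFun[\param]{\pointInit})$, so in particular $\evalFun[\param]{\pointInit} = \emb(\evalFun[\pointTimeInit]{\curveRedAlt})$ lies in $\emb(\mnfRed)$. The point projection property~\eqref{eq:projProperty:pointProjection} then gives $\evalFun[{\evalFun[\param]{\pointInit}}]{\redMapPoint} = \evalFun[{\emb(\evalFun[\pointTimeInit]{\curveRedAlt})}]{\redMapPoint} = \evalFun[\pointTimeInit]{\curveRedAlt}$, which is precisely the reduced initial value prescribed for the \ROM in~\eqref{eq:rom}.

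Next I would verify the reduced dynamics. The chain-rule computation in~\eqref{eq:exact_reproduction_tangent} already records, for every $\pointTime\in\mnfTime$, that $\evalField[{\evalFun[\pointTime;\param]{\curve}}]{\vf(\param)}$ equals $\evalFun[{\evalField[\pointTime;\param]{\ddt\curveRedAlt}}]{\evalField[{\evalFun[\pointTime;\param]{\curveRedAlt}}]{\diff\emb}}$. Applying the tangent reduction $\redMapTan{{\evalFun[\pointTime;\param]{\curve}}}$ to both ends of this identity and using $\evalFun[\pointTime;\param]{\curve}=\emb(\evalFun[\pointTime;\param]{\curveRedAlt})$, the leftmost expression becomes $\evalField[{\evalFun[\pointTime;\param]{\curveRedAlt}}]{\vfRed(\param)}$ by the definition of the \ROM vector field, whereas the rightmost expression collapses to $\evalField[\pointTime;\param]{\ddt\curveRedAlt}$ by the tangent projection property~\eqref{eq:projProperty:tangentProjection} applied at $\pointRed = \evalFun[\pointTime;\param]{\curveRedAlt}$. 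Hence $\evalField[\pointTime;\param]{\ddt\curveRedAlt} = \evalField[{\evalFun[\pointTime;\param]{\curveRedAlt}}]{\vfRed(\param)}$, so $\curveRedAlt$ satisfies the reduced differential equation in~\eqref{eq:rom}.

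Having shown that $\curveRedAlt$ satisfies both the reduced dynamics and the reduced initial condition of~\eqref{eq:rom}, I would invoke the assumed unique solvability of the \ROM to conclude $\evalFun[\cdot;\param]{\curveRed}=\curveRedAlt$ and then apply $\emb$ to obtain~\eqref{eq:exact_repro}. I expect the only point requiring care to be the bookkeeping of base points: the tangent reduction must be taken at $\emb(\evalFun[\pointTime;\param]{\curveRedAlt})=\evalFun[\pointTime;\param]{\curve}$, which is simultaneously the base point at which~\eqref{eq:projProperty:tangentProjection} trivializes $\redMapTan{\cdot}\circ\evalField[\pointRed]{\diff\emb}$ and the base point at which $\vfRed$ is defined. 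Beyond this alignment, the argument is an immediate consequence of the two halves of the projection property in~\eqref{eq:projProperty:split} together with the chain rule already established in~\eqref{eq:exact_reproduction_tangent}.
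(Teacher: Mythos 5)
Your proposal is correct and follows essentially the same route as the paper's proof: construct $\curveRedAlt = \inv{\emb}(\evalFun[\cdot;\param]{\curve})$, verify the reduced initial condition via the point projection property and the reduced dynamics via the tangent projection property applied to~\eqref{eq:exact_reproduction_tangent}, then conclude by unique solvability of the \ROM. The only (welcome) difference is that you make the final uniqueness step explicit, which the paper leaves implicit.
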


\begin{proof}
	Since $\evalFun[\pointTime;\param]{\curve}\in\emb(\mnfRed)$ for all $\pointTime\in\mnfTime$, we can construct $\curveRedAlt$ as in~\eqref{eqn:curveRedAlt}. It remains to show that $\curveRedAlt$ satisfies the \ROM~\eqref{eq:rom}. First, we obtain
	\begin{align*}
		 \reduce{\curve}_0(\param)
= \evalFun[\pointInit(\param)]{\redMapPoint}
= \evalFun[{\evalFun[\pointTimeInit; \param]{\curve}}]{\redMapPoint}
= \evalFun[{
	\evalFun[\pointTimeInit; \param]{\curveRedAlt}
}]{(\redMapPoint \circ \emb)}
= \evalFun[\pointTimeInit; \param]{\curveRedAlt},
	\end{align*}
	where the last equality is due to the projection property~\eqref{eq:projProperty:split} for the point reduction.
	Second, $\curveRedAlt$ suffices the initial value problem of the \ROM since the tangent projection property~\eqref{eq:projProperty:tangentProjection} implies with \eqref{eq:exact_reproduction_tangent}
	\begin{align*}
		\evalFun[{
	\evalField[{
		\evalFun[{
			\evalFun[\pointTime; \param]{\curveRedAlt}
		}]{\emb}
	}]{\vf(\param)}
		}]{\redMapTan{{
			{\evalFun[{
				\evalFun[\pointTime; \param]{\curveRedAlt}
			}]{\emb}}
		}}}
		= \evalFun[{
	\evalField[\pointTime; \param]{\ddt \curveRedAlt}
}]{\Big(\redMapTan{{\evalFun[{
			\evalFun[\pointTime; \param]{\curveRedAlt}
		}]{\emb}}}
	\circ
	\evalField[{
		\evalFun[\pointTime; \param]{\curveRedAlt}
	}]{\diff{\emb}}
\Big)}
	= \evalField[\pointTime; \param]{\ddt \curveRedAlt}.\tag*{\qedhere}
	\end{align*}
\end{proof}

In the following, we give an example of for which the exact reproduction can be achieved for a specific choice of $\mnfRed$ and $\emb$.

\begin{corollary}[Canonical form]\label{cor:exact_repro_emb}
	For a given \FOM~\eqref{eq:FOM} on $\mnf$,
	assume that $\mnfRed = \mnfTime \times \paramSet$ is an $(\paramDim + 1)$-dimensional smooth manifold,
	that the \FOM is uniquely solvable,
	that the \FOM solution $\curve\colon \mnfTime \times \paramSet =\vcentcolon \mnfRed \to \mnf$ is a smooth embedding,
	and that there exists a reduction map $\redMap \in \calC^{\infty}(\T\mnf, \T\mnfRed)$ for the smooth embedding $\emb\equiv\curve$.
	Then, the \ROM~\eqref{eq:rom} reproduces the \FOM solution exactly with the reduced integral curve $\evalFun[\pointTime; \param]{\curveRed} = (\pointTime, \param)$ such that the flow of the \ROM is $\flowRedAt[\pointTimeAlt](\pointTime, \param) = (\pointTime + \pointTimeAlt, \param)$.
	Moreover, the \ROM in bold notation reads
	\begin{align*}
		\evalField[\pointTime; \param]{\ddt \curveRedCoord}	&= \fe_1 \in \R^{\paramDim + 1}, &
		 \curveRedCoord_0(\param) &= (\pointTimeInit, \param),
	\end{align*}
	where $\fe_1 \in \R^{\paramDim + 1}$ denotes the first unit vector.
\end{corollary}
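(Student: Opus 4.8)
The plan is to apply \Cref{thm:exact_repro} to the canonical choice of embedding $\emb \equiv \curve$, viewing the solution map $\curve\colon \mnfRed = \mnfTime \times \paramSet \to \mnf$, and then to read off the reduced dynamics in the natural chart. First I would verify the hypotheses of \Cref{thm:exact_repro}: the \FOM is uniquely solvable and a reduction map $\redMap$ for $\emb$ is assumed to exist, while $\emb = \curve$ is a smooth embedding by assumption. Crucially, the geometric condition that the \FOM solution evolves on $\emb(\mnfRed)$ becomes automatic for this choice, since for every fixed $\param$ and all $\pointTime \in \mnfTime$ we have $\evalFun[\pointTime; \param]{\curve} = \emb(\pointTime, \param) \in \emb(\mnfRed)$, so the hypothesis that was a genuine restriction in \Cref{thm:exact_repro} is vacuous here.

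Next I would make the reduced curve $\curveRedAlt$ from~\eqref{eqn:curveRedAlt} explicit. As $\emb = \curve$ is injective (being an embedding) and $\evalFun[\pointTime; \param]{\curve} = \emb(\pointTime, \param)$, the preimage is $\curveRedAlt(\pointTime) = \emb^{-1}(\evalFun[\pointTime; \param]{\curve}) = (\pointTime, \param)$. Choosing the natural identity chart $\chartmapRed$ on $\mnfRed = \mnfTime \times \paramSet \subseteq \R^{\paramDim+1}$, the bold representation is $\curveRedCoord(\pointTime) = (\pointTime, \param) \in \R^{\paramDim + 1}$, so that $\evalField[\pointTime; \param]{\ddt \curveRedCoord} = \fe_1$. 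The initial value follows from the point projection property~\eqref{eq:projProperty:pointProjection}: since $\pointInit(\param) = \evalFun[\pointTimeInit; \param]{\curve} = \emb(\pointTimeInit, \param)$, we obtain $\reduce{\curve}_0(\param) = \evalFun[{\pointInit(\param)}]{\redMapPoint} = (\redMapPoint \circ \emb)(\pointTimeInit, \param) = (\pointTimeInit, \param)$, matching the claimed initial condition.

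To conclude exact reproduction I would check directly that $\curveRedAlt$ solves the \ROM~\eqref{eq:rom}, thereby also supplying the unique solvability of the \ROM that \Cref{thm:exact_repro} lists as a hypothesis but that is not assumed here. Along $\curveRedAlt$ the point $\emb(\curveRedAlt(\pointTime)) = \evalFun[\pointTime; \param]{\curve}$ carries the \FOM velocity $\evalField[\pointTime; \param]{\ddt \curve}$, which by~\eqref{eq:exact_reproduction_tangent} equals $\evalFun[{\evalField[\pointTime; \param]{\ddt \curveRedAlt}}]{\evalField[{\evalFun[\pointTime; \param]{\curveRedAlt}}]{\diff\emb}}$; applying the tangent reduction and the tangent projection property~\eqref{eq:projProperty:tangentProjection} returns $\evalField[\pointTime; \param]{\ddt \curveRedAlt}$. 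Hence $\curveRedAlt$ satisfies both the initial condition and the differential equation of~\eqref{eq:rom}, and since $\vfRed$ is a smooth vector field, standard uniqueness of integral curves makes this explicitly defined curve the unique \ROM solution on $\mnfTime$; equivalently, in bold notation the \ROM reads $\evalField[\pointTime; \param]{\ddt \curveRedCoord} = \fe_1$ with $\reduce{\curve}_0(\param) = (\pointTimeInit, \param)$. The exact reproduction identity $\evalFun[{\evalFun[\pointTime; \param]{\curveRed}}]{\emb} = \evalFun[\pointTime; \param]{\curve}$ then follows as in \Cref{thm:exact_repro}.

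Finally, for the flow I would repeat the vector-field computation at an arbitrary starting point $(\pointTime, \param) \in \mnfRed$: solving the \ROM for the parameter $\param$ read off the second coordinate, the integral curve through $(\pointTime, \param)$ is $\pointTimeAlt \mapsto (\pointTime + \pointTimeAlt, \param)$, giving $\flowRedAt[\pointTimeAlt](\pointTime, \param) = (\pointTime + \pointTimeAlt, \param)$. The step I would flag as the main subtlety is the bookkeeping underlying this last computation: the \ROM vector field is \emph{not} globally constant on $\mnfRed$, because the tangent reduction $\redMapTan{\pointSub}$ returns $\fe_1$ only along the locus where the parameter coordinate of the point agrees with the parameter $\param$ for which $\vf(\param)$ is evaluated. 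Consequently both the identity $\evalField[\pointTime; \param]{\ddt \curveRedCoord} = \fe_1$ and the flow formula must be read with the parameter taken from the current state, and confirming that this restriction is consistent along the entire solution (not merely infinitesimally) is where the argument requires the most care.
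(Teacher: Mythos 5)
Your proof is correct and takes essentially the same route as the paper: apply \Cref{thm:exact_repro} with $\emb \equiv \curve$, identify $\evalFun[\pointTime; \param]{\curveRedAlt} = (\pointTime, \param)$, and read off the componentwise derivative and the initial value via the point projection property. You are in fact slightly more careful than the paper's own proof, which asserts that all hypotheses of \Cref{thm:exact_repro} are met even though unique solvability of the \ROM is not listed among the corollary's assumptions, and which does not remark that $\vfRed(\param)$ equals $\fe_1$ only along the slice $\{(\pointTime,\param)\mid \pointTime\in\mnfTime\}$; your direct verification of the candidate curve together with uniqueness of integral curves for the smooth field $\vfRed(\param)$ settles both points.
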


\begin{proof}
	With the assumptions of \Cref{cor:exact_repro_emb}, the choice $\emb \equiv \curve$ guarantees that the assumptions of \Cref{thm:exact_repro} are fulfilled and $\evalFun[\pointTime; \param]{\curveRedAlt} = (\pointTime, \param)$ is a valid choice for the curve in~\eqref{eqn:curveRedAlt}, which was used in the proof of \Cref{thm:exact_repro} as the \ROM solution candidate. For the remaining statement, we observe
	\begin{align*}
		\evalField[\pointTime; \param]{\ddt \tidx{\curveRedAlt}{1}{}} &= 1, &
		\evalField[\pointTime; \param]{\ddt \tidx{\curveRedAlt}{i}{}} &= 0 \qquad \text{for } 1 < i \leq \paramDim + 1,
	\end{align*}
	and $ \reduce{\curve}_0(\param) = \evalFun[\pointTimeInit; \param]{\curveRedAlt} = (\pointTimeInit, \param)$, which completes the proof.
\end{proof}

\begin{example}
	A particular example describing the situation from \Cref{cor:exact_repro_emb} is given by the linear advection equation with constant coefficients and periodic boundary conditions, see for instance \cite[Ex.~5.12]{BlaSU20}.
\end{example}

\subsection{Manifold Petrov--Galerkin (MPG)}
\label{subsec:MPG}

Now we want to address one example of how to construct a reduction map \Crefpoint{def:reductionMapping}, i.e., how to do the \stepReduction~step from the general \MOR workflow described in \Cref{subsubsec:MORworkflow}. Note that this specific choice of reduction map has been independently developed in \cite{OttMR23}.
Moreover, we emphasize that the specific choice of the reduction map is crucial for the approximation quality of the \ROM; see, for instance, \cite{OttPR22}. Nevertheless, our goal here is not to present an optimal choice but rather an example of leveraging the smooth embedding $\emb$ to construct a reduction map using the previously introduced framework in \Cref{sec:gen_frame}.

Assume that we have completed the \stepApproximation~step from the general \MOR workflow, i.e., we have already identified a reduced manifold $\mnfRed$ together with a smooth embedding~$\emb$. Since $\emb$ is a homeomorphism onto its image, we know that $\inv{\emb}\colon \mnfSub \to \mnfRed$ exists. Under for \MOR reasonable assumptions, the extension lemma for smooth functions (see for instance \cite[Lem.~2.26]{Lee12}) guarantees that we can find a smooth extension $\redMapPoint$ of $\inv{\emb}$, which by construction satisfies the point projection property~\eqref{eq:projProperty:pointProjection}. 
We refer to \Cref{fig:red_emb} for an illustration of the relation between~$\inv{\emb}$ and~$\redMapPoint$. Differentiating the point projection property \eqref{eq:projProperty:pointProjection} with the chain rule \eqref{eq:chain_rule} implies
\begin{align}\label{eq:red_emb_duality}
	\evalField[{\evalFun[\pointRed]{\emb}}]{\diff{\redMapPoint}}\circ \evalField[\pointRed]{\diff{\emb}}
	= \evalField[\pointRed]{\diff{(\id_{\mnfRed})}}
	= \id_{\TpMnfRed}\colon \TpMnfRed \to \TpMnfRed,
\end{align}
i.e., $\evalField[{\evalFun[\pointRed]{\emb}}]{\diff{\redMapPoint}}$ is a left-inverse to $\evalField[\pointRed]{\diff{\emb}}$. In particular, we have proven the following duality result.

\begin{theorem}[MPG reduction map]
	\label{thm:red_emb_duality}
	Consider a smooth embedding $\emb$ and a point reduction~$\redMapPoint$ for $\emb$. Then, the differential of the point reduction $\redMapPoint$ is a left inverse to the differential of the embedding $\emb$.
	Consequently,
	\begin{equation}
		\label{eq:mpg_projection} 
		\redMapMPG \colon \T\mnf\to \T\mnfRed \qquad (\point, \velocity) \mapsto \left(\evalFun[\point]{\redMapPoint},\evalFun[\velocity]{\evalField[\point]{\diff{\redMapPoint}}}\right)
	\end{equation}
	is a smooth reduction map for $\emb$, which we call the \emph{\MPG reduction map} for $(\redMapPoint,\emb)$.
\end{theorem}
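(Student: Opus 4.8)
The plan is to recognize that the proposed map $\redMapMPG$ is precisely the differential $\diff{\redMapPoint}$ of the point reduction, viewed as a map of tangent bundles in the sense of~\eqref{eq:diff_tan_bundle}, and then to check the two defining requirements of \Cref{def:reductionMapping}: smoothness of $\redMapMPG \in \calC^{\infty}(\T\mnf, \T\mnfRed)$ and the projection property~\eqref{eq:projProperty}. The duality claim in the first sentence of the theorem---that $\evalField[\point]{\diff{\redMapPoint}}$ is a left inverse of $\evalField[\pointRed]{\diff{\emb}}$---has already been established pointwise in~\eqref{eq:red_emb_duality} by differentiating the point projection property~\eqref{eq:projProperty:pointProjection} with the chain rule, so only the second (``consequently'') sentence remains to be verified.

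First I would dispose of smoothness and the splitting structure. Since $\redMapPoint \in \calC^{\infty}(\mnf, \mnfRed)$, its differential on the tangent bundle is smooth by~\eqref{eq:diff_tan_bundle}, i.e.\ $\diff{\redMapPoint} \in \calC^{\infty}(\T\mnf, \T\mnfRed)$; comparing with the formula~\eqref{eq:mpg_projection} shows $\redMapMPG = \diff{\redMapPoint}$. The decomposition demanded by \Cref{def:reductionMapping} is then automatic: the point part is $\redMapPoint$ itself, and the tangent part at $\point$ is the linear map $\evalField[\point]{\diff{\redMapPoint}}$, which maps $\TpMnf[\point]$ into $\TpMnfRed[{{\evalFun[\point]{\redMapPoint}}}]$ and thus has exactly the prescribed domain and codomain.

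For the projection property~\eqref{eq:projProperty}, I would evaluate $\redMapMPG \circ \diff{\emb}$ on an arbitrary element $(\pointRed, \velocityRed) \in \T\mnfRed$. Unfolding both differentials via~\eqref{eq:diff_tan_bundle}, the point component becomes $\evalFun[\pointRed]{(\redMapPoint \circ \emb)} = \pointRed$ by the point projection property~\eqref{eq:projProperty:pointProjection}, while the tangent component becomes $\bigl(\evalField[{\evalFun[\pointRed]{\emb}}]{\diff{\redMapPoint}} \circ \evalField[\pointRed]{\diff{\emb}}\bigr)(\velocityRed) = \velocityRed$ by the left-inverse relation~\eqref{eq:red_emb_duality}. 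Hence $\redMapMPG \circ \diff{\emb} = \id_{\T\mnfRed}$, which is exactly~\eqref{eq:projProperty}. Equivalently and more compactly, one may invoke the tangent-bundle form of the chain rule to write $\diff{\redMapPoint} \circ \diff{\emb} = \diff{(\redMapPoint \circ \emb)} = \diff{\id_{\mnfRed}} = \id_{\T\mnfRed}$.

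I do not anticipate a genuine obstacle: the statement is a verification rather than a construction, and every ingredient has been prepared in the lines preceding the theorem. The only place warranting mild care is the passage from the pointwise chain rule~\eqref{eq:chain_rule} to its tangent-bundle form, but this is pure bookkeeping that follows immediately from the definition~\eqref{eq:diff_tan_bundle} of the differential on the tangent bundle once one applies it to $(\pointRed,\velocityRed)$ as above.
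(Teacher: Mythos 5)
Your proposal is correct and follows essentially the same route as the paper: the paper proves the theorem by differentiating the point projection property~\eqref{eq:projProperty:pointProjection} with the chain rule to obtain the left-inverse relation~\eqref{eq:red_emb_duality}, and then states the theorem as an immediate consequence. Your additional unfolding of $\redMapMPG \circ \diff{\emb}$ into its point and tangent components is just an explicit spelling-out of what the paper leaves implicit, and is sound.
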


We refer to the \ROM~\eqref{eq:rom} obtained with the \MPG reduction map from~\Cref{thm:red_emb_duality} as the \emph{\MPG-\ROM} for $(\redMapPoint,\emb)$. In index and bold notation, the tangent projection property~\eqref{eq:red_emb_duality} reads
	\begin{align}\label{eq:red_emb_duality_coord}
		&\evalField[{\evalFun[\pointRed]{\emb}}]{\fracdiff{
			\tidx{\redMapPoint}{i}{}
		}{
			\tidx{\chartmap}{k}{}
		}}
		\evalField[\pointRed]{\fracdiff{
			\tidx{\emb}{k}{}
		}{
			\tidx{\chartmapRed}{j}{}
		}}
		= \tidx{\delta}{i}{j},&
		& \evalField[{\pointSubCoord}]{\D\redCoord}
		\evalField[\pointRedCoord]{\D\embCoord} = \tname{\fI}{\dimMnfRed} \in \R^{\dimMnfRed \times \dimMnfRed}.
	\end{align}
	It can be interpreted as that the columns of $\evalField[\pointRedCoord]{\D\embCoord}$ span an $\dimMnfRed$-dimensional reduced vector space that changes with the reduced coordinates $\pointRedCoord \in \R^\dimMnfRed$,
	whereas the rows of $\evalField[{\pointSubCoord}]{\D\redCoord}$ span an $\dimMnfRed$-dimensional vector space dual to the reduced vector space.

\begin{example}[Linear-subspace MOR]
	\label{rem:lin_mor_mpg}
	If $\emb$ and $\redMapPoint$ are linear as in \Cref{rem:subspace_MOR}, then the \MPG-\ROM~\eqref{eq:rom} with the \MPG reduction map from \Cref{thm:red_emb_duality} is the \ROM obtained in classical linear-subspace \MOR via Petrov--Galerkin projection
	\begin{align*}
		&\evalField[\pointSubCoord]{\mpgReductionCoordSymbol}
= \evalField[\pointSubCoord]{\fD \redCoord}
= \rT\fW,&
		&\evalField[\pointTime]{\ddt \curveRedCoord}
= \rT\fW \evalField[{
	\evalFun[\pointTime]{\curveRedCoord}
}]{\vfCoord},
	\end{align*}
	which is the motivation for the terminology \MPG.
\end{example}

\section{Manifolds with structure}
\label{sec:DifGeoPartTwo}

As a next step, we want to discuss structure-preserving \MOR on manifolds \Crefpoint{sec:structurePreservation}.
Beforehand, we specify the relevant structures on the \FOM level in the present section.
The idea is to equip the underlying full manifold $\mnf$ with additional structure to formulate a \FOM vector field $\vf$, which guarantees physical properties, e.g., that the \FOM solutions preserve energy over time.
We introduce additional structure on~$\mnf$ \Crefpoint{sec:structure_diff_geo}, which allows us to formulate Lagrangian systems \Crefpoint{subsec:lagrange} and Hamiltonian systems~\Crefpoint{subsec:hamiltonian} on manifolds.
Both systems admit a \FOM vector field, which guarantees that the \FOM solutions preserve the corresponding energy over time.

\subsection{Additional structure on \texorpdfstring{$\mnf$}{the manifold}} \label{sec:structure_diff_geo}

To keep this work self-contained, we proceed by detailing more concepts of differential geometry. We discuss
the cotangent space and covectors \Crefpoint{subsec:cotangent_space}, tensors \Crefpoint{subsec:tensors}, tensor fields \Crefpoint{subsec:tensor_fields}, structured tensor fields \Crefpoint{subsec:StructuredTensorFields}, and pullbacks of covectors, tensor fields, and functions \Crefpoint{subsec:pullback}. 

\subsubsection{Cotangent space, covectors, and cotangent bundle}
\label{subsec:cotangent_space}
The dual of the tangent space at~$\point \in \mnf$ \eqref{eq:tangent_space} is the \emph{cotangent space at $\point \in \mnf$}
\begin{align*}
	\coTp\mnf \vcentcolon= \{ \covec \mid \covec\colon \Tp\mnf \to \R \text{ linear}\},
\end{align*}
which is again an $\dimMnf$-dimensional vector space.
Elements in the cotangent space are called \emph{cotangent vectors} or simply \emph{covectors}.
Covectors can be constructed from scalar-valued functions:
For each scalar-valued function $f \in \calC^{\infty}(\mnf, \R)$,
its differential at $m$, $\evalField{\diff{f}} \in \calC^{\infty}(\TpMnf, \Tp[\evalFun{f}]{ \R})$, defines a linear functional on $\TpMnf$ if we identify $\Tp[\evalFun{f}]{ \R}$ with~$\R$.
Thus, the differential at $m$ of a scalar-valued function is a covector $\evalField{\diff{f}} \in \coTpMnf$.
For a given chart $\chart$ of $\mnf$,
this construction can be used to define a basis of $\coTpMnf$:
For each $i \in \{1, \dots, \dimMnf\}$,
the $i$-th component function of the chart mapping $\tidx{\chartmap}{i}{} \in \calC^{\infty}(\chartdomain, \R)$ is a scalar-valued function and thus $\evalField{\diff{\tidx{\chartmap}{i}{}}} \in \coTpMnf$.
Moreover, with \eqref{eq:diff_chart_map}, \eqref{eq:diff}, and identifying $\Tp[{{\evalFun[\point]{\chartmap}}}]{\R}
\cong \R$, it holds for all basis vectors of the tangent space $\basisTanAt{j} \in \TpMnf$, $1 \leq j \leq \dimMnf$ the dual relationship
\begin{align*}
	\evalFun[
		\basisTanAt{j}
	]{
		\evalField{\diff{\tidx{\chartmap}{i}{}}}
	}
	=
	\evalField{
		\fracdiff{
			\tidx{\chartmap}{i}{}
		}{
			\tidx{\chartmap}{j}{}
		}
	}
	= \tidx{\delta}{i}{j}
	\in \R \cong \Tp[{{\evalFun[\point]{\chartmap}}}]{\R}
\end{align*}
The differentials $\{ \evalField{\diff{\tidx{\chartmap}{i}{}}} \}_{1 \leq i \leq \dimMnf}$ define a basis of $\coTpMnf$ and we can represent each covector $\covec \in \coTp\mnf$ as
\begin{align*}
	&\covec = \tidx{\covec}{}{i}\; \evalField{\basisCoTan{i}} \in \coTp\mnf,
\end{align*}
with \emph{(covector) components} $\tidx{\covec}{}{i} \in \R$,
where the right-hand side sums over $1 \leq i \leq \dimMnf$ by Einstein summation convention \eqref{eq:einstein_summation_convention}.
By the duality of the bases of $\Tp\mnf$ and $\coTp\mnf$,
it holds for each covector $\covec \in \coTpMnf$ and vector $\velocity \in \TpMnf$ that
\begin{align*}
	\evalFun[\velocity]{\covec}
= \evalFun[
	\tidx{\velocity}{i}{}
	\basisTanAt{i}
]{
	\left(
		\tidx{\covec}{}{j}\, \diff{\tidx{\chartmap}{j}{}}
	\right)
}
=\tidx{\covec}{}{j}\, 
\tidx{\velocity}{i}{}\,
\evalFun[\basisTanAt{i}]{\basisCoTanAt{j}}
= \tidx{\covec}{}{i}\,
\tidx{\velocity}{i}{}\,
\in \R.
\end{align*}
Analogously to the tangent bundle \eqref{eq:tangent_bundle}, a \emph{cotangent bundle $\coT\calM$} can be formulated as the disjoint union of $\coTpMnf$, which can be shown to be a smooth manifold of dimension $2\dimMnf$.

\subsubsection{Tensors}
\label{subsec:tensors}
A generalization of vectors and covectors are the so-called \emph{tensors}.
For a vector space $\vectorspace$ and its dual $\vectorspace^\ast$, 
the \emph{space of $(r,s)$-tensors} given by
\begin{align*}
	\Trs\vectorspace \vcentcolon=
\underbrace{\vectorspace \otimes \cdots \otimes \vectorspace}_{r\text{ times}} \otimes
\underbrace{\vectorspace^\ast \otimes \cdots \otimes \vectorspace^\ast}_{s\text{ times}}.
\end{align*}
In the present work we consider tensors on the tangent and cotangent space, i.e., $\vectorspace = \TpMnf$ and $\vectorspace^\ast = \coTpMnf$.
Special cases are $\Trs[1,0]{\Tp\mnf} = \Tp\mnf$ and $\Trs[0,1]{\Tp\mnf} = \coTp\mnf$.
An element $\tensor \in \Trs{\Tp\mnf}$ of a general $(r,s)$-tensor space is called an \emph{$r$-times contravariant $s$-times covariant tensor}. This element can be represented by
\begin{align*}
	\tensor = \tidx{\tensor}{\tname{i}{1} \dots \tname{i}{r}}{\tname{j}{1} \dots \tname{j}{s}}\;\;
\evalField{\basisTan{\tname{i}{1}}} \otimes \cdots \otimes \evalField{\basisTan{\tname{i}{r}}}
\otimes
\evalField{\basisCoTan{\tname{j}{1}}} \otimes \cdots \otimes \evalField{\basisCoTan{\tname{j}{s}}}
\end{align*}
with \emph{components} $\tidx{\tensor}{\tname{i}{1} \dots \tname{i}{r}}{\tname{j}{1} \dots \tname{j}{s}} \in \R$ for $1 \leq \tname{i}{1}, \dots, \tname{i}{r}, \tname{j}{1}, \dots, \tname{j}{s} \leq \dimMnf$, where the right-hand side sums over each index $1\leq \tname{i}{1}, \dots, \tname{i}{r}, \tname{j}{1}, \dots, \tname{j}{s} \leq \dimMnf$ by the Einstein summation convention~\eqref{eq:einstein_summation_convention}.
The position of the index (upper or lower index) indicates which type (co- or contravariant) the respective index belongs to.
To extend the bold notation from \Cref{subsec:bold_notation} for tensors,
we stack the components with
\begin{align*}
	\tensorCoord
\vcentcolon=
\stack{
	\tidx{\tensor}
		{\tname{i}{1} \dots \tname{i}{r}}
		{\tname{j}{1} \dots \tname{j}{s}}
}{
	1\leq \tname{i}{1}, \dots, \tname{i}{r},
	\tname{j}{1}, \dots, \tname{j}{s} \leq \dimMnf
}
\in \R^{
	\underbrace{\scriptstyle{\dimMnf \times \dimMnf \times \cdots \times \dimMnf}}_{{r + s} \text{ times}}
}.
\end{align*}

\subsubsection{Tensor field and bundle of $(r,s)$-tensors}
\label{subsec:tensor_fields}
A so-called \emph{tensor field} is a mapping which assigns each point $\point \in \mnf$ a tensor in the corresponding $(r,s)$-tensor space $\Trs{\Tp\mnf}$ analogous to the smooth vector field introduced in \Cref{subsec:tangent_bundle_vector_field}.
To this end we define the \emph{bundle of $(r,s)$-tensors} as the disjoint union of all $(r,s)$-tensor spaces
\begin{align*}
	\Trs{\T\mnf} \vcentcolon= \disjointUnion_{\point \in \mnf} \Trs{\Tp\mnf}
	= \{ (\point, \tensor) \;\big|\; \point \in \mnf, \; \tensor \in \Trs{\Tp\mnf} \}.
\end{align*}
Similarly as before, we obtain the special cases $\Trs[1,0]{\T\mnf} = \T\mnf$ and $\Trs[0,1]{\T\mnf} = \coT\mnf$.
An \emph{$(r,s)$-tensor field} is defined as a map
\begin{align*}
	&\tf\colon \mnf \to \Trs{\T\mnf}, \;\point \mapsto (\point, \evalField{\tf})&
	&\text{such that } \evalField{\tf} \in \Trs{\Tp\mnf}.
\end{align*}
For a given chart $\chart$ of $\mnf$, we denote the $((r+s) \cdot \dimMnf)$ functions $\smash
{\tidx{\tf}{\tname{i}{1} \dots \tname{i}{r}}{\tname{j}{1} \dots \tname{j}{s}}\colon \chartdomain \to \R}$,
$1\leq \tname{i}{1}, \dots, \tname{i}{r}, \tname{j}{1}, \dots, \tname{j}{s} \leq \dimMnf$, with 
$\evalFun{\tidx{\tf}{\tname{i}{1} \dots \tname{i}{r}}{\tname{j}{1} \dots \tname{j}{s}}}
\vcentcolon=
\tidx{\left( \evalField{\tf} \right)}{\tname{i}{1} \dots \tname{i}{r}}{\tname{j}{1} \dots \tname{j}{s}}$
as the \emph{component functions} to stress the dependence on the point $\point$.
To extend the bold notation from \Cref{subsec:bold_notation} for tensor fields,
we stack the component functions with
\begin{align*}
	\tfCoord
\vcentcolon= \stack{ \tidx{\tf}{\tname{i}{1} \dots \tname{i}{r}}{\tname{j}{1} \dots \tname{j}{s}} \circ \inv\chartmap }{1\leq \tname{i}{1}, \dots, \tname{i}{r}, \tname{j}{1}, \dots, \tname{j}{s} \leq \dimMnf}:
\R^{\dimMnf} \supseteq \evalFun[\chartdomain]{\chartmap} \to \R^{
	\underbrace{\scriptstyle{\dimMnf \times \dimMnf \times \cdots \times \dimMnf}}_{{r + s} \text{ times}}
}.
\end{align*}

An $(r,s)$-tensor field $\tf$ is called \emph{smooth} if all of its component functions are smooth, i.e., $
\tidx{
	\tf
}{
	\tname{i}{1} \dots \tname{i}{r}
}{
	\tname{j}{1} \dots \tname{j}{s}
} \in \calC^{\infty}(\chartdomain, \R)$.
The set of all smooth $(r,s)$-tensor fields is the so-called \emph{smooth section of the $(r,s)$-tensor bundle} $\TrsFieldMnf$.
A special case are the smooth vector fields $\smoothVfs = \TrsFieldMnf[1,0]$.

\subsubsection{Structured tensor fields and musical isomorphisms}
\label{subsec:StructuredTensorFields}
Tensor fields may possess additional properties, which we refer to as \emph{structure}. In the following, we introduce two important examples of tensor fields with special structures, namely Riemannian metrics and symplectic forms.

For a smooth $(0,2)$-tensor field $\tf \in \TrsFieldMnf[0,2]$ with its component functions $\tidx{\tf}{}{ij} \in \calC^{\infty} (\chartdomain, \R)$ for $1 \leq i,j \leq \dimMnf$ in a given chart $\chart$,
the tensor field $\tf$ is called
\begin{itemize}
	\item \emph{symmetric}, if $\tidx{(\evalField{\tf})}{}{ij} = \tidx{(\evalField{\tf})}{}{ji}$ for each $\point \in \chartdomain$ and for all $1 \leq i,j \leq \dimMnf$;
	\item \emph{skew-symmetric} or \emph{$2$-form}, if $\tidx{(\evalField{\tf})}{}{ij} = -\tidx{(\evalField{\tf})}{}{ji}$ for each $\point \in \chartdomain$ and for all $1 \leq i,j \leq \dimMnf$;
	\item \emph{nondegenerate}, if $\stack{\smash{\tidx{(\evalField{\tf})}{}{ij}}}{1\leq i,j \leq \dimMnf} \in \R^{\dimMnf \times \dimMnf}$ is nondegenerate for each $\point \in \chartdomain$;\vspace*{.2em}
	\item \emph{positive definite}, if $\stack{\smash{\tidx{(\evalField{\tf})}{}{ij}}}{1\leq i,j \leq \dimMnf} \in \R^{\dimMnf \times \dimMnf}$ is positive definite for all $\point \in \chartdomain$;\vspace*{.2em}
	\item a \emph{closed $2$-form}, if $\tf$ is a $2$-form and for each $\point \in \chartdomain$
	\begin{align}\label{eq:closedness}
		\evalField{\fracdiff{ \tidx{\tf}{}{jk} }{ \tidx{\chartmap}{i}{} }}
+ \evalField{\fracdiff{ \tidx{\tf}{}{ki} }{ \tidx{\chartmap}{j}{} }}
+ \evalField{\fracdiff{ \tidx{\tf}{}{ij} }{ \tidx{\chartmap}{k}{} }}
= 0
\qquad \text{ for all } 1 \leq i \leq j \leq k \leq \dimMnf.
	\end{align}
\end{itemize}

Combining some of the previous properties, we obtain the following concepts. A smooth $(0,2)$-tensor field $\tf \in \TrsField{0,2}{\mnf}$ on $\mnf$ is called 
\begin{itemize}
	\item a \emph{Riemannian metric on $\mnf$} if $\tf$ is symmetric and positive definite;
	\item a \emph{symplectic form on $\mnf$} if $\tf$ is skew-symmetric, nondegenerate, and closed.
\end{itemize}
If $\tf, \symplForm \in \TrsFieldMnf[0,2]$ are a Riemannian metric and a symplectic form on $\mnf$, respectively, then we call $(\mnf,\tf)$ and $(\mnf,\symplForm)$ a \emph{Riemannian manifold} and \emph{symplectic manifold}, respectively. Note that the nondegeneracy of a symplectic form implies that a symplectic manifold has even dimension.

Both the Riemannian metric and the symplectic form are nondegenerate tensor fields.
This allows to formulate the inverse $(2,0)$-tensor field $\inv\tf \in \TrsFieldMnf[2,0]$ such that ${\tidx{(\inv{\evalField{\tf}})}{ik}{} \tidx{(\evalField{\tf})}{}{kj} = \tidx{\delta}{i}{j}}$,
where the components are typically denoted with \smash{$\tidx{({\evalField{\tf}})}{ik}{} \vcentcolon= \tidx{(\inv{\evalField{\tf}})}{ik}{}$} for the sake of brevity.
Moreover, the nondegeneracy allows to formulate an isomorphism between the tangent and the cotangent bundle.
Loosely speaking, this means that the indices in the index notation can be switched from covariant (superindices) to contravariant (subindices) and vice versa.
This is typically referred to as \emph{musical isomorphisms}
\begin{align}\label{eq:isoFlat}
	&\flat_{\tf} \in \calC^{\infty} (\T\mnf, \cotangentT\mnf),
\quad
\left(	
	\point,\,
	\tidx{\velocity}{i}{} \, \basisTanAt{i}
\right)
\mapsto
\left(
	\point,\,
	\tidx{(\evalField{\tf})}{}{ij} \; \tidx{\velocity}{j}{} \; \basisCoTanAt{i}
\right),\\
	\label{eq:isoSharp}
	&\sharp_{\tf} \in \calC^{\infty} (\cotangentT\mnf, \T\mnf),
\quad
\left(
	\point,\,
	\tidx{\covec}{}{i} \, \basisCoTanAt{i}
\right)
\mapsto
\left(
	\point,\,
	\tidx{(\evalField{\tf})}{ij}{} \; \tidx{\covec}{}{j} \; \basisTanAt{i}
\right).
\end{align}
Due to the nondegeneracy of $\tf$,
the two mappings are inverses of each other, i.e.,
\begin{align}\label{eq:iso_sharp_inv}
	\sharp_\tf \circ \flat_\tf
	\equiv \id_{\T\mnf}.
\end{align}
By a slight abuse of notation, we use the same symbols from \eqref{eq:isoFlat} and \eqref{eq:isoSharp} also to map between (co)tangent spaces $\flat_{\tf}\colon \TpMnf \to \coTpMnf$ and $\sharp_{\tf}\colon \coTpMnf \to \TpMnf$ (instead of the respective bundles).

\subsubsection{Pullback of covectors, tensor fields, and functions}
\label{subsec:pullback}
Consider two smooth manifolds $\mnf$, $\mnfAlt$
and a smooth map $F \in \calC^{\infty}(\mnf, \mnfAlt)$.
Let $\chart$ and $\chartMnfAlt$ be charts of $\mnf$ and $\mnfAlt$ respectively
such that $\point \in \chartdomain$ and $\evalFun{F} \in \chartdomainMnfAlt$.
The differential \eqref{eq:diff} of $F$ can be used to define the \emph{pointwise pullback (of covectors) by $F$ at $\point$} via
\begin{align}
	\label{def:diff_pullback}
	\evalField{\dualdiff{F}} \in \calC^{\infty}\left(\coTpMnfAlt[{\evalFun{F}}],  \coTpMnf \right),
	\quad
	\tidx{\covec}{}{i}
	\;
	\evalField[{\evalFun{F}}]{\basisCoTanMnfAlt{i}}
	\mapsto
	\evalField{\fracdiff{\tidx{F}{i}{}}{\tidx{x}{j}{}}}
	\tidx{\covec}{}{i}
	\;
	\evalField{\basisCoTan{j}}.
\end{align}

For a smooth $(0,s)$-tensor field $\tf \in \TrsFieldMnfAlt[0,s]$,
the \emph{pullback of $\tf$ by $F$}, denoted by $\pullback{F}{\tf} \in \TrsFieldMnf[0,s]$, is a smooth tensor field (see \cite[Prop.~11.26]{Lee12})
with component functions%
\footnote{The pullbacks from \eqref{def:diff_pullback} and \eqref{eq:pullback} can be related in the case of smooth covector fields $\covf \in \TrsFieldMnfAlt[0,1]$, i.e., $s=1$, with $\evalField{\left(\pullback{F}{\covf}\right)} = \evalField{\dualdiff{F}}{\evalField[{\evalFun{F}}]{\covf}} \in \coTpMnf$.}
\begin{equation}\label{eq:pullback}
\begin{aligned}
	\tidx{\left(
		\evalField{\pullback{F}{\tf}}
	\right)}{}{\tname{j}{1} \dots \tname{j}{s}}
\vcentcolon= \tidx{(
	\evalField[{\evalFun{F}}]{\tf}
)}{}{\tname{\ell}{1}\dots \tname{\ell}{s}}
\cdot
\trafo{F}{\point}{\tname{\ell}{1}}{\chartmap}{\tname{j}{1}}
\cdots
\trafo{F}{\point}{\tname{\ell}{s}}{\chartmap}{\tname{j}{s}}.
\end{aligned}
\end{equation}
A scalar-valued smooth function $h \in \calC^{\infty} (\mnfAlt, \R)$ can be interpreted as a $(0,0)$-tensor field.
Then, as a special case of \eqref{eq:pullback}, the \emph{pullback of (a function) $h$ by $F$} is a smooth function $\pullback{F}h \in \calC^{\infty} (\mnf, \R)$ with
\begin{align}\label{eq:pullback_function}
	\evalFun{(\pullback{F}h)} = \evalFun[{\evalFun{F}}]{h} = \evalFun{(h \circ F)}.
\end{align}

By \Cref{subsec:cotangent_space},
the differential of a smooth scalar-valued function $G \in \calC^{\infty}(\mnfAlt, \R)$ defines a covector $\evalField[{\evalFun[\point]{F}}]{\diff{G}} \in \coTpMnfAlt$.
Then an analogue to the chain rule \eqref{eq:chain_rule} is
\begin{align}\label{eq:chain_rule_cotan}
	&\evalField[\point]{\diff{(\pullback{F} G)}}
	= \evalField[\point]{\dualdiff{F}}
\evalField[{\evalFun[\point]{F}}]{\diff{G}}
	\in \coTpMnf,
\end{align}
which uses the pullback of a function \eqref{eq:pullback_function} on the left-hand side and applies the pointwise pullback $\evalField[\point]{\dualdiff{F}} \in \calC^{\infty}(\coTpMnfAlt, \coTpMnf)$ to the covector $\evalField[{\evalFun[\point]{F}}]{\diff{G}} \in \coTpMnfAlt$ on the right-hand side of the equation.

\subsection{Lagrangian systems}
\label{subsec:lagrange}
This subsection defines Lagrangian systems formulated on a manifold and additionally introduces further structure required for the \MOR part discussed in the forthcoming \Cref{subsec:MORLagrangian}.
Consider a $\dimMnfQ$-dimensional smooth manifold $\mnfQ$ with chart $\chartMnfQ$.
As mentioned in \Cref{subsec:tangent_bundle_vector_field}, the tangent bundle $\T\mnfQ$ is a $2\dimMnfQ$-dimensional smooth manifold
and the differential $\diff{\chartmapMnfQ} \in \calC^{\infty} (\chartdomainTanSpaceMnfQ, \R^{2\dimMnfQ})$ defines a natural chart \eqref{eq:chartmapTanSpace}.
We abbreviate this chart with $\chartmapTanSpaceMnfQ \vcentcolon= \diff{\chartmapMnfQ}$ for brevity.
By \eqref{eq:chartmapTanSpace}, it holds
\begin{align*}
	\chartmapTanSpaceMnfQ\colon
\chartdomainTanSpaceMnfQ \to \R^{2\dimMnfQ},\;
\left(
	\pointQ,
	\tidx{\velocity}{i}{}\; \evalField[\pointQ]{\basisMnfQTan{i}}
\right)
\mapsto \left(
	\evalFun[\pointQ]{\chartmapMnfQ},
	\stack{\tidx{\velocity}{i}{}}{1 \leq i \leq \dimMnfQ}
\right).
\end{align*}
It will be relevant to differentiate between the first $\dimMnfQ$ and the latter $\dimMnfQ$ entries of $\chartmapTanSpaceMnfQ$ for a point $\pointMnfQTanSpace = (\pointQ, \velocity)\in \T\mnfQ$, which will be denoted with
\begin{align*}
	&\evalFun[\pointMnfQTanSpace]{\tidx{\chartmapTanSpaceMnfQ}{i}{}}
= \evalFun[\pointQ]{\tidx{\chartmapMnfQ}{i}{}},&
	&\evalFun[\pointMnfQTanSpace]{\tidx{\chartmapTanSpaceMnfQ}{\dimMnfQ + i}{}}
= \tidx{\velocity}{i}{},&
	&\text{for } 1 \leq i \leq \dimMnfQ.
\end{align*}
To lift a smooth curve $\curveMnfQ \in \calC^{\infty}(\mnfTime, \mnfQ)$ to its tangent bundle, we define
\begin{align*}
\lift{\curveMnfQ} \in \calC^{\infty} (\mnfTime, \T\mnfQ),\;
\pointTime \mapsto \left(
	\evalFun[\pointTime]{\curveMnfQ},
	\evalField[\pointTime]{\ddt \curveMnfQ}
\right).
\end{align*}

We denote a \emph{Lagrangian system} as the tuple $(\mnfQ, \Lag)$ of a smooth manifold $\mnfQ$ and a smooth function $\Lag \in \calC^\infty(\T\mnfQ, \R)$,
which we refer to as the \emph{Lagrangian function}.
The associated second-order differential equation on the manifold is given by the \emph{Euler--Lagrange equation}
\begin{align}\label{eq:euler_lagrange}
	&\evalField[{{{\evalFun[\pointTime]{\lift{\curveMnfQ}}}}}]{
		\fracdiff{\Lag}
			{\chartmapMnfQPosi{i}}
	}
- \evalField[\pointTime]{
	\ddt \left(
	\evalField[
		{{\evalFun[\cdot]{\lift{{\curveMnfQ}}}}}
	]{
		\fracdiff{\Lag}{\tidx{\chartmapTanSpaceMnfQ}{\dimMnfQ+i}{}}}
	\right)
}
= 0&
	&\text{for } 1\leq i \leq \dimMnfQ,&
	&{\evalFun[\pointTimeInit]{\lift\curveMnfQ}}
= \begin{pmatrix}
	\pointQInit\\
	\velocityInit
\end{pmatrix},
\end{align}
with \emph{initial value} $(\pointQInit, \velocityInit) \in \T\mnfQ$, which has to be solved for $\curveMnfQ \in \calC^\infty(\mnfTime, \mnfQ)$.
In bold notation, the equation in coordinates reads for the Lagrangian $\LagCoord\vcentcolon= \Lag \circ \inv\chartmapTanSpaceMnfQ\colon \R^{2 \dimMnfQ} \supseteq \chartmapTanSpaceMnfQ(\chartdomainTanSpaceMnfQ) \to \R$, $(\pointQCoord, \velocityCoord) \mapsto \LagCoord(\pointQCoord, \velocityCoord)$
\begin{align*}
	\evalField[\left({{\evalFun[\pointTime]{\curveMnfQCoord}}}, \ddt {{\evalFun[\pointTime]{\curveMnfQCoord}}}\right)]{\D_{\pointQCoord} \LagCoord}
	- \ddt
\evalField[\pointTime]{
	\left(
		\evalField[
			\left(
				{{\evalFun[\cdot]{\curveMnfQCoord}}},
				\ddt {{\evalFun[\cdot]{\curveMnfQCoord}}}
			\right)
		]{\D_{\velocityCoord} \LagCoord}
	\right)
}
= \fzero
 \in \R^{\dimMnfQ},
\end{align*}
where $\D_{\pointQCoord}(\cdot)$ denotes the derivative with respect to the first $\dimMnfQ$ coordinates (named $\pointQCoord$ here) and $\D_{\velocityCoord}(\cdot)$ the derivative for the last $\dimMnfQ$ coordinates (named $\velocityCoord$ here).

Since the Euler--Lagrange equations are obtained from a variation of an action functional, it is well-known that the solution curve is guaranteed to conserve a scalar-valued function (see, e.g., \cite[Sec.~3.5]{Abraham1987} and \cite[Prop.~7.3.1]{MarR99}):

\begin{theorem}
	\label{thm:euler_lagrange_energy}
	The \emph{energy}
	\begin{align*}
		\energy\colon \T\mnfQ \to \R,\qquad
\pointMnfQTanSpace = \left( \pointQ, \tidx{\velocity}{i}{} \basisMnfQTanAt{i} \right)
\mapsto
\tidx{\velocity}{j}{}
\evalField[\pointMnfQTanSpace]{
	\fracdiff{\Lag}
		{\chartmapMnfQVeli{j}}
}
- \evalFun[\pointMnfQTanSpace]{\Lag}
	\end{align*}
	is conserved along the lift of the solution curve $\curveMnfQ$ of the Euler--Lagrange equations, i.e.,
	\begin{align*}
\evalField[\pointTime]{
	\ddt \evalFun[{
		\evalFun[\cdot]{\lift{\curveMnfQ}}
	}]{\energy}
} = 0
\qquad
\text{for all } \pointTime \in \mnfTime.
	\end{align*}
\end{theorem}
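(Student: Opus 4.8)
The plan is to translate the statement into the coordinate (bold) form of the Euler--Lagrange equation \eqref{eq:euler_lagrange} and then differentiate the energy directly in time. Because the time-derivative of a scalar-valued function along a smooth curve is a chart-independent quantity, it suffices to fix $\pointTime \in \mnfTime$, choose a chart $\chartMnfQ$ with $\evalFun[\pointTime]{\curveMnfQ} \in \chartdomainMnfQ$, and work in the induced natural chart $\chartmapTanSpaceMnfQ = \diff{\chartmapMnfQ}$ on $\T\mnfQ$. Writing $\curveMnfQCoord \vcentcolon= \chartmapMnfQ \circ \curveMnfQ$ for the coordinate curve and $\LagCoord = \Lag \circ \inv{\chartmapTanSpaceMnfQ}$, the lift $\lift{\curveMnfQ}$ is represented by $(\curveMnfQCoord, \ddt \curveMnfQCoord)$, so that the energy along the lifted solution becomes the scalar function $t \mapsto \rTb{\ddt\curveMnfQCoord}\,\D_{\velocityCoord}\LagCoord - \LagCoord$, with both summands evaluated at $(\curveMnfQCoord, \ddt\curveMnfQCoord)$. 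This matches the defining formula for $\energy$ once the velocity coordinate $\chartmapMnfQVeli{j}$ is identified with the $j$-th component of $\ddt\curveMnfQCoord$.

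Next I would differentiate this scalar function in $\pointTime$. The product rule applied to the first summand yields $\rTb{\sddt\curveMnfQCoord}\,\D_{\velocityCoord}\LagCoord + \rTb{\ddt\curveMnfQCoord}\,\ddt(\D_{\velocityCoord}\LagCoord)$, while the chain rule applied to $\ddt\LagCoord(\curveMnfQCoord, \ddt\curveMnfQCoord)$ gives $\D_{\pointQCoord}\LagCoord \cdot \ddt\curveMnfQCoord + \D_{\velocityCoord}\LagCoord \cdot \sddt\curveMnfQCoord$. The key bookkeeping step is that the two contributions carrying the second time-derivative $\sddt\curveMnfQCoord$ are equal scalars of opposite sign and therefore cancel, leaving
\begin{align*}
	\evalField[\pointTime]{\ddt \evalFun[{\evalFun[\cdot]{\lift{\curveMnfQ}}}]{\energy}}
	= \rTb{\evalField[\pointTime]{\ddt\curveMnfQCoord}} \Bigl( \ddt(\D_{\velocityCoord}\LagCoord) - \D_{\pointQCoord}\LagCoord \Bigr).
\end{align*}

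Finally, the bracket is exactly the negative of the left-hand side of the coordinate form of the Euler--Lagrange equation \eqref{eq:euler_lagrange}, which vanishes along the solution curve $\curveMnfQ$; hence the derivative of the energy is zero at $\pointTime$, and since $\pointTime \in \mnfTime$ was arbitrary, $\energy$ is conserved. I expect the only genuine subtlety to be the term bookkeeping in the cancellation of the $\sddt\curveMnfQCoord$ contributions (equivalently, relabeling the summation index in the index-notation term $\tidx{\velocity}{j}{}\,\fracdiff{\Lag}{\chartmapMnfQVeli{j}}$), together with the conceptual remark that this chart-local computation yields a global statement because the differentiated quantity is an intrinsically defined scalar. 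Smoothness of $\curveMnfQ$ guarantees that $\sddt\curveMnfQCoord$ exists, so no regularity issues arise.
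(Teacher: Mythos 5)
Your proof is correct. The paper does not actually prove this statement --- it only cites \cite[Sec.~3.5]{Abraham1987} and \cite[Prop.~7.3.1]{MarR99} --- but your direct coordinate computation (product rule on $\tidx{\velocity}{j}{}\,\fracdiff{\Lag}{\chartmapMnfQVeli{j}}$, chain rule on $\Lag$ along the lift, cancellation of the two $\sddt\curveMnfQCoord$ terms, and substitution of the Euler--Lagrange equation into the remaining bracket) is exactly the standard argument those references give, and your remark that chart-locality suffices because the differentiated quantity is an intrinsically defined scalar correctly closes the one gap such a computation could leave.
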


The Lagrangian is called \emph{regular} if the smooth $(0,2)$-tensor field defined by the second-order derivative of the Lagrangian w.r.t.\ the velocity 
\begin{align}\label{eq:def_metricV}
	\evalField[\pointMnfQTanSpace]{\metricV}
\vcentcolon= \evalField[\pointMnfQTanSpace]{
	\sfracdiff{\Lag}
		{\chartmapMnfQVeli{i}}
		{\chartmapMnfQVeli{j}\,}
}
\basisTMnfQCoTanVelAt{i}
\otimes
\basisTMnfQCoTanVelAt{j}
\end{align}
at each point $\pointMnfQTanSpace \in \T\mnfQ$ is nondegenerate \Crefpoint{subsec:StructuredTensorFields}.
In this case, we can formulate the \emph{Euler--Lagrangian vector field} $\vfLag \in \smoothVfs[\T\mnfQ]$ such that at a point $\pointMnfQTanSpace = \left( \pointQ, \tidx{\velocity}{i}{} \basisTanAt[\pointQ]{i} \right) \in \T\mnfQ$, it holds
\begin{align}\label{eq:def_vfLag}
	\evalField[\pointMnfQTanSpace]{\vfLag}
\vcentcolon= 
\tidx{\velocity}{i}{}
\basisTMnfQTanPosAt{i}
+
\tidx{{
	(\evalField[\pointMnfQTanSpace]{\metricV})
}}{(\dimMnfQ + i)(\dimMnfQ + j)}{}
\left(
	\evalField[\pointMnfQTanSpace]{
		\fracdiff{\Lag}
			{\chartmapMnfQPosi{j}}
	}
	-
	\evalField[\pointMnfQTanSpace]{
		\sfracdiff{\Lag}
			{\chartmapMnfQVeli{j}}
			{\chartmapMnfQPosi{k}\,}
	}
	\tidx{\velocity}{k}{}
\right)
\basisTMnfQTanVelAt{i},
\end{align}
where we use the convention from \Cref{subsec:StructuredTensorFields} to use upper indices to denote the corresponding inverse tensor field.
This vector field can be used to formulate the Lagrangian system:
Let $\curve \in \calC^\infty(\mnfTime, \T\mnfQ)$ be an integral curve of $\vfLag$ with starting point $(\pointQInit, \velocityInit) \in \T\mnfQ$.
Then, solving the Euler--Lagrange equations \eqref{eq:euler_lagrange} for $\curveMnfQ$ is equivalent to finding the integral curve $\curve$ of $\vfLag$
with
$\evalFun[\pointTime]{\curve} = \evalFun[\pointTime]{\lift{\curveMnfQ}}$.
In bold notation, the system for $\curve$ reads
\begin{align}\label{eq:lag_fo_coord}
	\evalField[\pointTime]{\ddt \curveCoord}
=
\begin{pmatrix}
	\evalFun[\pointTime]{\curveVCoord}\\
	\inv{
		\evalField[
			{\evalFun[\pointTime]{\curveCoord}}
		]{\metricVCoord}
	} \left(
		\evalField[{
			\evalFun[\pointTime]{\curveCoord}
		}]{\fD_{\pointQCoord} \LagCoord}
		- 
		{\evalField[{
			\evalFun[\pointTime]{\curveCoord}
		}]{\fD^2_{\velocityCoord\pointQCoord} \LagCoord}}
		\evalFun[\pointTime]{\curveVCoord}
	\right)
\end{pmatrix}
\in \evalFun[\chartdomainTanSpaceMnfQ]{\chartmapTanSpaceMnfQ} \subseteq \R^{2\dimMnfQ}.
\end{align}
Here we denote by $\evalField[\pointMnfQTanSpaceCoord]{\fD^2_{\velocityCoord\pointQCoord} \LagCoord} \in \R^{\dimMnfQ \times \dimMnfQ}$ the mixed derivative w.r.t.~$\velocityCoord$ and $\pointQCoord$
and the solution curve is split
$\evalFun[\pointTime]{\curveCoord} = (
	\evalFun[\pointTime]{\curveQCoord},
	\evalFun[\pointTime]{\curveVCoord}
)  \in \evalFun[\chartdomainTanSpaceMnfQ]{\chartmapTanSpaceMnfQ} \subseteq \R^{2 \dimMnfQ}$
in a part for $\pointQCoord$ and a part for $\velocityCoord$.
The system~\eqref{eq:lag_fo_coord} is typically referred to as the \emph{first-order formulation} for the Lagrangian system.

\subsection{Hamiltonian systems} \label{subsec:hamiltonian}
In this subsection, we derive a formulation of Hamiltonian systems on a manifold,\footnote{Hamiltonian systems may result from Lagrangian systems via a Legendre transformation, but this is not the subject of the current work, so we refer to \cite[Sec.~3.6]{Abraham1987}.} providing the structure to perform \MOR in the forthcoming \Cref{subsec:MORHamiltonian}.
Let us recall from \Cref{subsec:cotangent_space}
that the differential of a smooth function $G \in \calC^{\infty}(\mnf, \R)$ at a point $\point \in \mnf$ defines a covector $\evalField{\diff{G}} \in \coTpMnf$.
Extending this idea, the differential $\diff{G} \in \calC^{\infty} (\T\mnf, \T\R)$ defines a smooth covector field $\diff{G} \in \TrsFieldMnf[0,1]$
with component functions $\tidx{(\evalField{\diff{G}})}{}{i} = \evalField{\fracdiff{G}{\tidx{\chartmap}{i}{}}}$.

For a given symplectic manifold $(\mnf, \symplForm)$ and a smooth function $\Ham \in \calC^\infty(\mnf, \R)$ referred to as the \emph{Hamiltonian (function)},
the \emph{Hamiltonian vector field}
\begin{align*}
	&\vfHam
\vcentcolon= \isoSharp[\symplForm]{\diff\Ham} \in \TrsFieldMnf[1,0],&
	&\text{or in index notation: }
	\tidx{(\evalField{\vfHam})}{i}{}
= \tidx{(\evalField{\symplForm})}{ij}{}\;
\tidx{(\evalField{\diff\Ham})}{}{j}
\end{align*}
is uniquely defined due to the nondegeneracy of $\symplForm$.
A \emph{Hamiltonian system} $(\mnf, \symplForm, \Ham)$ is an initial value problem 
\eqref{eq:integral_curve}
with an integral curve $\curve \in \calC^\infty(\mnfTime, \mnf)$ of $\vfHam$ with starting point $\pointInit \in \mnf$, i.e.,
\begin{align}\label{eq:hamiltonian_system}
	&\evalField[\pointTime]{\ddt \curve}
= \evalField[{{\evalFun[\pointTime]{\curve}}}]{\vfHam}
\in \TpMnf[{{\evalFun[\pointTime]{\curve}}}]&
&\text{and}&
&\evalFun[\pointTimeInit]{\curve} = \pointInit \in \mnf.
\end{align}
We denote a Hamiltonian system in bold notation%
\footnote{As the Jacobian $\evalField[\pointCoord]{\fD\HamCoord} \in \R^{1 \times \dimMnf}$ is a row vector,
we need to transpose it for the multiplication to match dimensions.}
with
\begin{align}\label{eq:hamiltonian_system_coord}
	&\evalField[\pointTime]{\ddt \curveCoord}
= \invb{
	\evalField[{{
		\evalFun[\pointTime]{\curveCoord}
	}}]\symplFormCoord}
\rT{\evalField[{{
	\evalFun[\pointTime]{\curveCoord}}
}]{\D\HamCoord}} \in \R^{\dimMnf},&
	&\evalFun[\pointTimeInit]{\curveCoord} = \pointInitCoord \in \R^{\dimMnf}.
\end{align}
This special construction of the vector field guarantees that the Hamiltonian is conserved along the solution curve, since
\begin{align*}
	\evalField[\pointTime]{\ddt (\Ham \circ \curve)}
\stackrel{\eqref{eq:chain_rule}}{=} \tidx{\left(
	\evalField[{{\evalFun[\pointTime]{\curve}}}]{\diff \Ham}
\right)}{}{i}
\tidx{\left(
	\evalField[\pointTime]{\ddt \curve}
\right)}{i}{}
\stackrel{\eqref{eq:hamiltonian_system}}{=}
\tidx{\left(
	\evalField[{{\evalFun[\pointTime]{\curve}}}]{\diff \Ham}
\right)}{}{i}
\tidx{\left(
	\evalField[{{\evalFun[\pointTime]{\curve}}}]{\symplForm}
\right)}{ij}{}
\tidx{\left(
	\evalField[{{\evalFun[\pointTime]{\curve}}}]{\diff \Ham}
\right)}{}{j}
= 0,
\end{align*}
where the last step uses that for skew-symmetric tensors $\tensor \in \Trs[2,0]{\TpMnf}$, it holds $\tidx{\covec}{}{i}\, \tidx{\tensor}{ij}{}\, \tidx{\covec}{}{j} = - \tidx{\covec}{}{i}\, \tidx{\tensor}{ij}{}\, \tidx{\covec}{}{j} = 0$ for all covectors $\covec \in \coTpMnf$.

For two given symplectic manifolds $(\mnf, \symplForm)$ and $(\mnfAlt, \symplFormAlt)$,
we call a smooth diffeomorphism $F \in \calC^{\infty} (\mnfAlt, \mnf)$ a \emph{symplectomorphism} if $\pullback{F}\symplForm = \symplFormAlt$.
It can be shown that the flow of a Hamiltonian system $\flowAt: \mnf \to \mnf$ is a symplectomorphism.

The theorem of Darboux (see e.g. \cite[Thm.~3.2.2]{Abraham1987}) guarantees that for each point $\point \in \mnf$,
there exists a chart $\chart$ with $\point \in \chartdomain$ which is \emph{canonical},
i.e., the symplectic form in these coordinates can be represented with $\symplFormCoord \equiv \rT\JtN$ by the \emph{canonical Poisson tensor}%
\footnote{Note that in contrast to existing work in the field of structure-preserving \MOR of Hamiltonian systems,
we speak of the symplectic form $\symplFormCoord = \rT\JtN$ instead of $\JtN$. This yields the same Hamiltonian vector field $\evalField[\pointCoord]{\vfHamCoord}
= \JtN \rT{\evalField[\pointCoord]{\D \HamCoord}}$
and does not change the reduction formulas later, but it helps to understand the more general case of noncanonical coordinates $\symplFormCoord \neq \rT\JtN$.}
\begin{align}\label{eq:poisson_tensor}
	&\JtN = \begin{pmatrix}
		\tname{\fzero}{\dimMnf} & \tname{\fI}{\dimMnf}\\
		 -\tname{\fI}{\dimMnf} & \tname{\fzero}{\dimMnf}
	\end{pmatrix} \in \R^{2\dimMnf \times 2\dimMnf}&
	&\text{for which}&
	&\rT\JtN = -\JtN = \inv\JtN,
\end{align}
where $\tname{\fI}{\dimMnf}, \tname{\fzero}{\dimMnf} \in \R^{\dimMnf \times \dimMnf}$ are the identity matrix and matrix of all zeros, respectively.
In the case of $\mnf = \R^{2\dimMnf}$ with $\evalField[\pointCoord]{\symplFormCoord} = \rT\JtN$ for all $\point \in \mnf$, we call $(\R^{2\dimMnf}, \rT\JtN, \HamCoord)$ a \emph{canonical Hamiltonian system}.

\section{Structure-preserving MOR on manifolds}
\label{sec:structurePreservation}

With the general model reduction framework presented in \Cref{sec:mor_on_mnf} at hand, we now discuss how the general framework can be specialized to preserve important features of the initial value problem on the manifold. In more detail, we first introduce the \emph{generalized manifold Galerkin} (\GMG) reduction map in \Cref{subsec:GMG} and then use it to discuss the structure-preserving \MOR of
\begin{itemize}
	\item Lagrangian systems in \Cref{subsec:MORLagrangian}, and 
	\item Hamiltonian systems in \Cref{subsec:MORHamiltonian}.
\end{itemize}

\subsection{Generalized manifold Galerkin}
\label{subsec:GMG}
Assume that the manifold $\mnf$ of dimension $\dimMnf$ is endowed with a nondegenerate $(0,2)$-tensor field $\tf \in \TrsFieldMnf[0,2]$, as defined in \Cref{subsec:StructuredTensorFields}. As in \Cref{subsec:MPG}, we assume that we have already constructed an embedded submanifold $\mnfSub \subseteq \mnf$ defined by a smooth embedding $\emb \in \calC^{\infty}(\mnfRed, \mnf)$, i.e., we have completed the \stepApproximation~step from the general \MOR workflow in \Cref{subsubsec:MORworkflow}. The straightforward way to define a reduced tensor field is to use the pullback from \Cref{subsec:pullback}. Hence, we make the following assumption.

\begin{assumption}
	\label{ass:reducedPseudoRiemannian}
	Given the nondegenerate $(0,2)$-tensor field $\tf \in \TrsFieldMnf[0,2]$, the smooth embedding $\emb \in \calC^{\infty}(\mnfRed, \mnf)$ is such that the reduced tensor field
	\begin{equation*}
		\tfRed \vcentcolon= \pullback{\emb}{\tf} \in \TrsFieldMnfRed[0,2],
	\end{equation*}
	is nondegenerate.
\end{assumption}

Note that the reduced tensor field in bold notation reads
\begin{align}
	\label{eq:pmetric_red_coord}
	\evalField[\pointRedCoord]{\tfRedCoord}
= 
\rT{\evalField[\pointRedCoord]{\D\embCoord}}\,
\evalField[{\pointSubCoord}]{\tfCoord}
\evalField[\pointRedCoord]{\D\embCoord}
\in \R^{\dimMnfRed \times \dimMnfRed},
\end{align}
which immediately illustrates that \Cref{ass:reducedPseudoRiemannian} may not be satisfied, in general. For instance, if we take $\mnf = \R^2$ and $\mnfRed = \R$, the tensor field to be a constant skew-symmetric matrix and a linear embedding, then \Cref{ass:reducedPseudoRiemannian} is violated. See also the forthcoming \Cref{ex:degenerateTensorField}. On the other hand, if the tensor field is a Riemannian metric on $\mnf$, i.e., symmetric and positive definite, then the reduced tensor field is also a Riemannian metric.

We immediately obtain the following relation between the full and reduced musical isomorphisms discussed in \Cref{subsec:StructuredTensorFields}.
\begin{lemma}
	\label{lem:gmg_pmetricRed}
	Under \Cref{ass:reducedPseudoRiemannian}, it holds
	\begin{align}
		\label{eq:lem_gmg_pmetricRed}
		\evalField[\pointRed]{\dualdiff{\emb}}
		\circ
		\flat_{\tf}
		\circ \evalField[\pointRed]{\diff{\emb}}
		=
		\flat_{\tfRed} \in \calC^{\infty} (\TpMnfRed, \coTpMnfRed).
	\end{align}
\end{lemma}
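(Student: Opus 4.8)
The statement is an equality of two $\R$-linear maps $\TpMnfRed \to \coTpMnfRed$, so the plan is to fix $\pointRed \in \mnfRed$, choose a chart $\chartRed$ on $\mnfRed$ around $\pointRed$ and a chart $\chart$ on $\mnf$ around $\evalFun[\pointRed]{\emb}$, and to verify that both sides send an arbitrary tangent vector $\velocity = \tidx{\velocity}{j}{}\,\basisTanRedAt{j} \in \TpMnfRed$ to the same covector. Since a covector in $\coTpMnfRed$ is determined by its components in the dual basis $\{\basisCoTanRedAt{\ell}\}_\ell$, it suffices to match these components; this reduces the claim to a single index identity, and the whole argument is then bookkeeping with the definitions already recorded in \Cref{subsec:StructuredTensorFields,subsec:pullback}.

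First I would push $\velocity$ through the left-hand composition one map at a time. The differential \eqref{eq:diff} gives $\evalField[\pointRed]{\diff{\emb}}(\velocity) = \tidx{\velocity}{j}{}\,\trafoEmb{k}{j}\,\basisTanAt[{\evalFun[\pointRed]{\emb}}]{k}$; the flat map \eqref{eq:isoFlat} evaluated at $\evalFun[\pointRed]{\emb}$ then produces the covector whose component along $\basisCoTanAt[{\evalFun[\pointRed]{\emb}}]{i}$ equals $\tidx{(\evalField[{\evalFun[\pointRed]{\emb}}]{\tf})}{}{ik}\,\trafoEmb{k}{j}\,\tidx{\velocity}{j}{}$; and the pointwise pullback of covectors \eqref{def:diff_pullback} — read with source chart $\chartmapRed$ on $\mnfRed$ and target chart $\chartmap$ on $\mnf$, since $\emb$ maps $\mnfRed$ into $\mnf$ — contracts the remaining upper index $\tidx{\emb}{i}{}$ against this covector, so that
\begin{equation*}
	\evalFun[\velocity]{\left(\evalField[\pointRed]{\dualdiff{\emb}} \circ \flat_{\tf} \circ \evalField[\pointRed]{\diff{\emb}}\right)}
	= \tidx{(\evalField[{\evalFun[\pointRed]{\emb}}]{\tf})}{}{ik}\,\trafoEmb{i}{\ell}\,\trafoEmb{k}{j}\,\tidx{\velocity}{j}{}\,\basisCoTanRedAt{\ell}.
\end{equation*}

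For the right-hand side I would simply insert the component formula for the pullback tensor field \eqref{eq:pullback} into the definition of the flat map \eqref{eq:isoFlat}. With $F=\emb$ evaluated at $\pointRed$, \eqref{eq:pullback} reads $\tidx{(\evalField[\pointRed]{\tfRed})}{}{\ell j} = \tidx{(\evalField[{\evalFun[\pointRed]{\emb}}]{\tf})}{}{ik}\,\trafoEmb{i}{\ell}\,\trafoEmb{k}{j}$, whence $\evalFun[\velocity]{\flat_{\tfRed}} = \tidx{(\evalField[\pointRed]{\tfRed})}{}{\ell j}\,\tidx{\velocity}{j}{}\,\basisCoTanRedAt{\ell}$ coincides termwise with the displayed expression. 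As $\pointRed$ and $\velocity$ were arbitrary and every map in the composition is smooth, the two sides agree as elements of $\calC^{\infty}(\TpMnfRed, \coTpMnfRed)$, which is \eqref{eq:lem_gmg_pmetricRed}. The only place care is needed — and essentially the single genuine obstacle — is keeping the chart roles straight in \eqref{def:diff_pullback} and \eqref{eq:pullback}: because $\emb$ goes from the \emph{reduced} manifold into the full one, it is the entries $\trafoEmb{i}{\ell}$ that appear and they are contracted on the covariant slots, whereas an incorrect source/target assignment would contract the wrong indices. I note that \Cref{ass:reducedPseudoRiemannian} is not actually used in verifying the identity, which is purely algebraic; it serves only to guarantee that $\tfRed$ is nondegenerate, so that $\flat_{\tfRed}$ is the genuine musical isomorphism of a reduced metric or symplectic form whose inverse $\sharp_{\tfRed}$ exists. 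Alternatively, the same identity follows coordinate-freely from the adjointness $\langle \evalField[\pointRed]{\dualdiff{\emb}}\covec, \velocity\rangle = \langle \covec, \evalField[\pointRed]{\diff{\emb}}\velocity\rangle$, the characterization $(\pullback{\emb}{\tf})(\cdot,\cdot) = \tf(\evalField[\pointRed]{\diff{\emb}}\cdot,\,\evalField[\pointRed]{\diff{\emb}}\cdot)$, and $\langle \flat_{\tf} w, u\rangle = \tf(u,w)$, by testing the covector identity against all $\velocity$.
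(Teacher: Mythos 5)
Your proposal is correct and follows essentially the same route as the paper's proof: both verify the identity componentwise in index notation by combining \eqref{eq:diff}, \eqref{eq:isoFlat}, \eqref{def:diff_pullback}, and \eqref{eq:pullback}, differing only in that you traverse the composition from left-hand side to right-hand side while the paper starts from $\flat_{\tfRed}$ and rearranges. Your added observations (that nondegeneracy is not needed for the identity itself, and the coordinate-free adjointness argument) are accurate but not part of the paper's argument.
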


\begin{proof}
	We prove the statement in index notation.
	Using~\eqref{eq:isoFlat}, \eqref{eq:pullback}, \eqref{eq:diff}, and \eqref{def:diff_pullback}, we obtain for all $\pointRed \in \mnfRed$, all $\velocityRed \in \TpMnfRed$ and all $1 \leq i \leq \dimMnfRed$
	\begin{align*}
		\tidx{(\evalFun[\velocityRed]{\flat_{\tfRed}})}{}{i}
		&= \tidx{(\evalField[\pointRed]{\tfRed})}{}{ij}\;
\tidx{\velocityRed}{j}{}
		= \tidx{
			(\evalField[{\evalFun[\pointRed]{\emb}}]{\tf})
		}{}{\tname{\ell}{1} \tname{\ell}{2}}\;
\trafo{\emb}{\pointRed}{\tname{\ell}{1}}{\chartmap}{i}
\trafo{\emb}{\pointRed}{\tname{\ell}{2}}{\chartmap}{j}
\tidx{\velocityRed}{j}{}\\
		&= \trafo{\emb}{\pointRed}{\tname{\ell}{1}}{\chartmap}{i}
\tidx{
	(\evalField[{\evalFun[\pointRed]{\emb}}]{\tf})
}{}{\tname{\ell}{1} \tname{\ell}{2}}\;
\trafo{\emb}{\pointRed}{\tname{\ell}{2}}{\chartmap}{j}
\tidx{\velocityRed}{j}{}
=
\tidx{\left(
	\evalFun[\velocityRed]{
		(
		\evalField[\pointRed]{\dualdiff{\emb}}
		\circ
		\flat_{\tf}
		\circ
		\evalField[\pointRed]{\diff{\emb}}
		)
	}
\right)}{}{i}.\qedhere
	\end{align*}
\end{proof}

The additional structure allows us to construct an alternative reduction mapping to the \MPG reduction map~\eqref{eq:mpg_projection}, which we refer to as the \emph{generalized manifold Galerkin} (\GMG)
\begin{equation}
	\label{eq:gmg_reduction}
	\gmgReduction\colon \T\mnf \supseteq \vecbunSub \to \T\mnfRed,\qquad
(\point,\velocity)
\mapsto
\left(
	\redMapPoint(\point),\evalFun[{\velocity}]{\left(
		\sharp_{\tfRed}
		\circ
		\evalField[{{\evalFun{\redMapPoint}}}]{\dualdiff{\emb}}
		\circ
		\flat_{\tf}
	\right)}
\right),
\end{equation}
which is defined on the vector bundle
\begin{align*}
	\vecbunSub \vcentcolon= \dot{\bigcup}_{\point \in \mnfSub} \TpMnf.
\end{align*}
The domain $\vecbunSub \subseteq \T\mnf$ of the \GMG reduction map is in general smaller than in the original definition of a reduction map \Crefpoint{def:reductionMapping}. Nevertheless, all previous results are valid for reduction maps $\redMap: \vecbunSub \to \T\mnfRed$ as the reduction map is only used in the \ROM to project $\evalField[\pointSub]{\vf} \in \TpMnf[\pointSub]$ which is part of $\vecbunSub$.
We avoided introducing $\vecbunSub$ earlier for a better readability.
The restriction of the domain for the \GMG is necessary as $\evalField[\pointRed]{\dualdiff{\emb}}: \coTpMnf[{{\evalFun[\pointRed]{\emb}}}] \to \coTpMnfRed$ is defined on $\coTpMnf[{{\evalFun[\pointRed]{\emb}}}]$ only.

By construction, \eqref{eq:projProperty:pointProjection}, \Cref{lem:gmg_pmetricRed}, and \eqref{eq:iso_sharp_inv}, we obtain
\begin{align*}
	\evalField[\pointSub]{\gmgReduction} \circ \evalField[\pointRed]{\diff{\emb}}
	&=
	\sharp_{\tfRed}
	\circ
	\evalField[(\red \circ \emb)(\pointRed)]{\dualdiff{\emb}}
	\circ
	\flat_{\tf}
	\circ
	\evalField[\pointRed]{\diff{\emb}}
	= \sharp_{\tfRed} \circ \flat_{\tfRed}
	= \id_{\TpMnfRed},
\end{align*}
which proves the following result.

\begin{theorem}\label{thm:gmg}
	The \GMG reduction~\eqref{eq:gmg_reduction} is a reduction map for $\emb$.
\end{theorem}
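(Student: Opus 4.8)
The plan is to verify directly that the map $\gmgReduction$ from \eqref{eq:gmg_reduction} meets the two requirements in \Cref{def:reductionMapping}: that it is smooth as a map $\vecbunSub \to \T\mnfRed$, and that it satisfies the projection property \eqref{eq:projProperty}, equivalently the split conditions \eqref{eq:projProperty:split}. As the excerpt already observes, every earlier result remains valid for reduction maps defined only on the restricted domain $\vecbunSub$, so it is enough to check the projection property on this bundle.

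First I would settle smoothness. The tangent part of $\gmgReduction$ is the composition of the point reduction $\redMapPoint$ (smooth by construction, exactly as in the \MPG setting), the pointwise pullback $\dualdiff{\emb}$, and the two musical isomorphisms $\flat_{\tf}$ and $\sharp_{\tfRed}$. Each factor is smooth: $\flat_{\tf}$ and $\sharp_{\tfRed}$ are smooth by their coordinate expressions \eqref{eq:isoFlat}--\eqref{eq:isoSharp}, and $\sharp_{\tfRed}$ is even well-defined only because \Cref{ass:reducedPseudoRiemannian} guarantees that the reduced tensor field $\tfRed$ is nondegenerate. Hence the composition, and therefore $\gmgReduction$, is smooth.

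Next I would check the point projection property \eqref{eq:projProperty:pointProjection}: the first component of $\gmgReduction$ is $\redMapPoint$, which by construction extends $\inv{\emb}$ and thus satisfies $\redMapPoint \circ \emb = \id_{\mnfRed}$. For the tangent projection property \eqref{eq:projProperty:tangentProjection}, I would evaluate the tangent part of $\gmgReduction$ at the base point $\pointSub$ in the image of $\emb$. The point projection property then forces $\evalFun[\pointSub]{\redMapPoint} = \pointRed$, so the pullback appearing in the tangent reduction is $\evalField[\pointRed]{\dualdiff{\emb}}$ based at $\evalFun[\pointRed]{\emb}$. Composing with $\evalField[\pointRed]{\diff{\emb}}$ and invoking \Cref{lem:gmg_pmetricRed} collapses the middle three factors via $\evalField[\pointRed]{\dualdiff{\emb}} \circ \flat_{\tf} \circ \evalField[\pointRed]{\diff{\emb}} = \flat_{\tfRed}$, after which \eqref{eq:iso_sharp_inv} yields $\sharp_{\tfRed} \circ \flat_{\tfRed} = \id_{\TpMnfRed}$. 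This is precisely the chain of equalities displayed immediately before the theorem.

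The main obstacle is careful bookkeeping of base points rather than any analytic difficulty: one must ensure that $\redMapPoint$ restricted to $\mnfSub$ returns exactly $\pointRed$, so that \Cref{lem:gmg_pmetricRed} --- stated for the pullback $\dualdiff{\emb}$ based at $\evalFun[\pointRed]{\emb}$ --- applies verbatim. Relatedly, restricting to $\vecbunSub$ is what makes the composition meaningful in the first place, since $\evalField[\pointRed]{\dualdiff{\emb}}$ is defined only on $\coTpMnf[{\evalFun[\pointRed]{\emb}}]$; and the nondegeneracy from \Cref{ass:reducedPseudoRiemannian} is indispensable for $\sharp_{\tfRed}$ to exist at all.
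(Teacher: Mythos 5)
Your proposal is correct and follows essentially the same route as the paper: the point projection property identifies the base point, \Cref{lem:gmg_pmetricRed} collapses $\evalField[\pointRed]{\dualdiff{\emb}} \circ \flat_{\tf} \circ \evalField[\pointRed]{\diff{\emb}}$ to $\flat_{\tfRed}$, and \eqref{eq:iso_sharp_inv} finishes the tangent projection property. The additional remarks on smoothness and on why the domain must be restricted to $\vecbunSub$ are consistent with the discussion the paper places around the definition of \GMG.
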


The corresponding \ROM~\eqref{eq:rom} obtained with the \GMG reduction map is called \emph{\GMG-\ROM}. In bold notation, the associated reduced vector field for the \FOM vector field $\vf \in \smoothVfs$ reads with \eqref{eq:projProperty:pointProjection}
\begin{align}
	\evalField[\pointSubCoord]{
		\gmgReductionCoord
	}
	\left( \evalField[\pointSubCoord]{\vfCoord} \right)
	&=
	\invb{
		\rT{\evalField[(\redMapPointCoord \circ \embCoord)(\pointRedCoord)]{\D\embCoord}}\,
		\evalField[{\pointSubCoord}]{\tfCoord}
		\evalField[(\redMapPointCoord \circ \embCoord)(\pointRedCoord)]{\D\embCoord}
	}
	\rT{\evalField[(\redMapPointCoord \circ \embCoord)(\pointRedCoord)]{\D\embCoord}}\,
	\evalField[{\pointSubCoord}]{\tfCoord}
	\evalField[{\pointSubCoord}]{\vfCoord}\nonumber\\
	\label{eq:gmg_projection_coord}
	&=
	\invb{
		\rT{\evalField[\pointRedCoord]{\D\embCoord}}\,
		\evalField[{\pointSubCoord}]{\tfCoord}
		\evalField[\pointRedCoord]{\D\embCoord}
	}
	\rT{\evalField[\pointRedCoord]{\D\embCoord}}\,
	\evalField[{\pointSubCoord}]{\tfCoord}
	\evalField[{\pointSubCoord}]{\vfCoord}
	\in \R^{\dimMnfRed}.
\end{align}

To motivate the name \GMG, we consider the special case that $\mnf = \R^{\dimMnf}$, $\mnfRed = \R^{\dimMnfRed}$ are vector spaces over $\R$ (with identity charts $\chartmap \equiv \id_{\R^{\dimMnf}}$, $\chartmapRed \equiv \id_{\R^{\dimMnfRed}}$) and the nondegenerate tensor field $\tf$ is a Riemannian metric that is constant in coordinates, i.e., $\evalField[\pointCoord]{\tfCoord} = \tfCoord = \mathrm{const}$. We then obtain with \eqref{eq:gmg_projection_coord}
\begin{align*}
		\evalField[\pointSubCoord]{
			\gmgReductionCoord
		}
		\left( \evalField[\pointSubCoord]{\vfCoord} \right)
		&=
		\invb{
			\left(
				\rT{\evalField[\pointRedCoord]{\D\embCoord}}
				\sq{\tfCoord}
			\right)
			\left(
				\sq{\tfCoord}
				\evalField[\pointRedCoord]{\D\embCoord}
			\right)
		}
		\left(
			\rT{\evalField[\pointRedCoord]{\D\embCoord}}
			\sq{\tfCoord}
		\right)
		\sq{\tfCoord}
		\evalField[{\pointSubCoord}]{\vfCoord}\\
		&= 
		\left(
			\sq{\tfCoord}
			\evalField[\pointRedCoord]{\D\embCoord}
		\right)^\dagger
		\sq{\tfCoord}
		\evalField[{\pointSubCoord}]{\vfCoord},
\end{align*}
where $(\cdot)^\dagger$ denotes the Moore--Penrose pseudoinverse. In particular, we recover the \emph{manifold Galerkin projection} introduced in \cite[Rem~3.4]{Lee20}, which allows interpreting the reduced vector field as the optimal projection w.r.t.~the Riemannian metric $\tf$; see \cite[Sec.~3.2]{Lee20}.

\begin{example}[Special case: linear-subspace MOR]
	\label{ex:lin_mor_gmg}
	In the case of $\red,$ $\emb$ being linear as in \Cref{rem:subspace_MOR} with $\tfCoord \equiv \mathrm{const}$ and $\rT\fV \tfCoord \fV = \tname{\fI}{\dimMnfRed}$,
	the \GMG reduction is exactly the \ROM obtained via standard Galerkin projection
	\begin{align*}
		&\evalField[\pointSubCoord]{\gmgPullbackCoordSymbol}(\velocityCoord)
= \invb{\rT\fV \tfCoord \fV} \rT\fV \tfCoord \velocityCoord
= \rT\fV \tfCoord \velocityCoord,
		&\evalField[\pointTime]{\ddt \curveRedCoord}
= \rT\fV \tfCoord \evalField[{
	\evalFun[\pointTime]{\curveRedCoord}
}]{\vfCoord}.
	\end{align*}
\end{example}

As discussed in \Cref{sec:DifGeoPartTwo},
the \FOM vector field may possess additional structure in specific applications, such as Lagrangian or Hamiltonian dynamics.
In the following, we show that the \GMG reduction can be used to formulate structure-preserving \MOR (on manifolds) for Lagrangian and Hamiltonian systems
by choosing a specific nondegenerate tensor field.

\subsection{MOR on manifolds for Lagrangian systems}
\label{subsec:MORLagrangian}
As in \Cref{subsec:lagrange}, consider a $\dimMnfQ$-dimensional smooth manifold $\mnfQ$ with a chart $\chartMnfQ$ and the corresponding chart $\chartTanSpaceMnfQ$ of the tangent bundle $\T\mnfQ$ \Crefpoint{subsec:lagrange}.
The manifold to be reduced in the context of Lagrangian systems is the tangent bundle $\T\mnfQ$. To be consistent with the notation introduced before, we thus set $\mnf \vcentcolon= \T\mnfQ$ with even dimension $\dimMnf \vcentcolon= \dim(\mnf) \vcentcolon= 2 \dimMnfQ$. Instead of working directly on $\mnf$, we still aim for a construction on $\mnfQ$ by employing that the differential of a smooth map \Crefpoint{subsec:tangent_bundle_vector_field} is a mapping between the associated tangent spaces.

\begin{definition}[Lifted embedding and lifted point reduction]\label{def:lifted_emb}
	Consider an embedded submanifold $\mnfQSub\subseteq\mnfQ$ defined by a $\dimMnfQRed$-dimensional manifold $\mnfQRed$ and a smooth embedding $\embQ\colon \mnfQRed \to \mnfQSub$. Then, we call
	\begin{align*}
		\embTanSpace \vcentcolon= \diff{\embQ} \colon \T\mnfQRed \to \T{\left( \mnfQSub \right)},\qquad
 \left( \pointQRed, \velocityRed \right) \mapsto \left(
	\evalFun[\pointQRed]{\embQ},
	\evalField[\pointQRed]{\diff{\embQ}} \left( \velocityRed \right)
\right)
	\end{align*}
	the \emph{lifted embedding} for $\embQ$.
	Analogously, for a point reduction $\redMnfQ\colon \mnfQ \to \mnfQRed$,
	we define the \emph{lifted point reduction}
	\begin{align*}
		\redTanSpace \vcentcolon= \diff{\redMnfQ} \colon
\T\mnfQ \to \T\mnfQRed,\qquad
(\pointQ, \velocity) \mapsto
\left( \evalFun[\pointQ]{\redMnfQ}, \evalField[{\pointQ}]{\diff{\redMnfQ}} (\velocity) \right).
	\end{align*}
\end{definition}

Let us emphasize that $\redTanSpace$ is indeed a point reduction on $\mnf = \T\mnfQ$ for the lifted embedding~$\emb$, which is a straightforward consequence of \Cref{thm:red_emb_duality}.
For $\smash{\Big( \pointQRed, \tidx{\velocityRed}{k}{} \basisMnfQRedTanAt{k} \Big)} \in \T\mnfQRed$ with a chart $\chartMnfQRed$ for $\mnfQRed$ and $\chartTanSpaceMnfQRed$ for $\T\mnfQRed$, we immediately obtain
\begin{align*}
		\evalField[\left( \pointQRed, \tidx{\velocityRed}{k}{} \basisMnfQRedTanAt{k} \right)]{
			\fracdiff{\tidx{\embTanSpace}{i}{}}{\tidx{\chartmapTanSpaceMnfQRed}{j}{}}
		}
	= \begin{cases}
		\evalField[{\pointQRed}]{
			\fracdiff{\tidx{\embQ}{i}{}}
				{\tidx{\chartmapMnfQRed}{j}{}}
		},
		&1\leq i\leq \dimMnfQ,\; 1\leq j\leq \dimMnfQRed,\\
		0,
		&1\leq i\leq \dimMnfQ,\; 1\leq j-\dimMnfQRed \leq \dimMnfQRed,\\
		\evalField[{\pointQRed}]{
			\sfracdiff{\tidx{\embQ}{i - \dimMnfQ}{}}
				{\tidx{\chartmapMnfQRed}{k}{}}
				{\tidx{\chartmapMnfQRed}{j}{}\,}
		} \tidx{\velocityRed}{k}{},
		&1\leq i - \dimMnfQ \leq \dimMnfQ,\; 1\leq j \leq \dimMnfQRed,\\
		\evalField[{\pointQRed}]{
			\fracdiff{\tidx{\embQ}{i-\dimMnfQ}{}}
				{\tidx{\chartmapMnfQRed}{j-\dimMnfQRed}{}}
		},
		&1\leq i - \dimMnfQ \leq \dimMnfQ,\; 1\leq j-\dimMnfQRed \leq \dimMnfQRed,\\
	\end{cases}
\end{align*}
which reads in bold notation
\begin{align*}
		&\evalFun[{\pointQRedCoord, \velocityRedCoord}]{\embTanSpaceCoord}
= \begin{pmatrix}
	\evalFun[\pointQRedCoord]{\embQCoord}\\
	\evalField[\pointQRedCoord]{\fD \embQCoord}\velocityRedCoord
\end{pmatrix}
\in \R^{2 \dimMnfQ},&
		&\evalField[(\pointQRedCoord, \velocityRedCoord)]{\fD \embTanSpaceCoord}
= \begin{pmatrix}
	\evalField[\pointQRedCoord]{\fD \embQCoord} &
\fzero \\
	\evalField[\pointQRedCoord]{\fD \left(
		\evalField[(\cdot)]{\fD \embQCoord} \velocityRedCoord \right)} &
\evalField[\pointQRedCoord]{\fD \embQCoord}
\end{pmatrix}
\in \R^{2 \dimMnfQ \times 2 \dimMnfQRed}.
\end{align*}

\begin{example}
	\label{ex:linear_embTanSpaceCoord}
	For a linear embedding $\evalFun[\pointQRedCoord]{\embQCoord} = \fV \pointQRedCoord$ as in \Cref{rem:subspace_MOR},
	the lifted embedding from \Cref{def:lifted_emb} is described by a block-diagonal basis matrix
	\begin{align*}
		\evalFun[\pointQRedCoord, \velocityRedCoord]{\embTanSpaceCoord}
= \begin{pmatrix}
	\fV & \fzero\\
	\fzero & \fV
\end{pmatrix}
\begin{pmatrix}
	\pointQRedCoord\\
	\velocityRedCoord
\end{pmatrix},
	\end{align*}
	which is frequently used in \MOR for second-order systems (see, e.g., \cite{Ruiner2012}).
\end{example}

With these preparations, let us now assume that we have a Lagrangian system $(\mnfQ, \Lag)$ with initial value $(\pointQInit, \velocityInit) \in \T\mnfQ$ together with embedded submanifold $\mnfQSub \subseteq \mnfQ$ with the embedding $\embQ$ and a point reduction $\redMnfQ$ available. Let $\embTanSpace$ and $\redTanSpace$ denote the corresponding lifted embedding and lifted point reduction as in \Cref{def:lifted_emb}. To preserve the Lagrangian system structure in the \ROM, we do not aim for a projection of the Euler--Lagrange equations~\eqref{eq:euler_lagrange} but rather start by constructing a reduced Lagrangian via
\begin{equation}
	\label{eqn:Lagrangian_red}
	\LagRed \vcentcolon= \pullback{\embTanSpace}{\Lag} = \Lag \circ \embTanSpace \in \calC^{\infty}(\T\mnfQRed)
\end{equation}
and immediately obtain the reduced Lagrangian system $(\mnfQRed,\LagRed)$ with reduced initial value~$(\pointQInitRed, \velocityInitRed) \vcentcolon= \evalFun[\pointQInit, \velocityInit]{\redTanSpace} \in \T\mnfQRed =\vcentcolon \mnfRed$. Note that with this strategy, we immediately obtain the \ROM that itself is a Lagrangian system, which is not automatically guaranteed if we reduce the vector field~\eqref{eq:def_vfLag}. Straightforward calculations (see \Cref{appx:proof_red_euler_lagrange}) show that the Euler-Lagrange equations of the reduced Lagrangian system read
\begin{equation}
	\label{eq:red_euler_lagrange}
	\begin{aligned}
		0 &=
		\evalField[{\evalFun[\pointTime]{\curveRed}}]{
			\fracdiff{\tidx{\embQ}{j}{}}
				{\tidx{\chartmapMnfQRed}{i}{}}
		}
		\Bigg(
		\evalField[
			{\evalFun[
				{\evalFun[\pointTime]{\lift{\curveRed}}}
			]{\embTanSpace}}
		]{
			\fracdiff{\Lag}{\tidx{\chartmapTanSpaceMnfQ}{j}{}}
		}
		-
		\evalField[
			{\evalFun[
				{\evalFun[\pointTime]{\lift{\curveMnfQRed}}}
			]{\embTanSpace}}
		]{
			\sfracdiff{\Lag}
				{\tidx{\chartmapTanSpaceMnfQ}{\dimMnfQ + j}{}}
				{\tidx{\chartmapTanSpaceMnfQ}{k}{}\,}
		}
		\evalField[{\evalFun[\pointTime]{\curveMnfQRed}}]{
			\fracdiff{\tidx{\embQ}{k}{}}
				{\tidx{\chartmapMnfQRed}{\ell}{}}
		}
		\evalField[\pointTime]{\ddt\tidx{\curveMnfQRed}{\ell}{}}\\
			&\phantom{\evalField[{\evalFun[\pointTime]{\curveMnfQRed}}]{
				\fracdiff{\tidx{\embQ}{j}{}}
					{\tidx{\chartmapMnfQRed}{i}{}}
			}
			\Bigg(}
		-
		\evalField[
			{\evalFun[
				{\evalFun[\pointTime]{\lift{\curveMnfQRed}}}
			]{\embTanSpace}}
		]{
			\sfracdiff{\Lag}
				{\tidx{\chartmapTanSpaceMnfQ}{\dimMnfQ + j}{}}
				{\tidx{\chartmapTanSpaceMnfQ}{\dimMnfQ + k}{}\,}
		}
		\evalField[{\evalField[\pointTime]{{\curveMnfQRed}}}]{
			\sfracdiff{\tidx{\embQ}{k}{}}
				{\tidx{\chartmapMnfQRed}{p}{}}
				{\tidx{\chartmapMnfQRed}{\ell}{}\,}
		}
		\evalField[\pointTime]{\ddt\tidx{\curveMnfQRed}{p}{}}
		\evalField[\pointTime]{\ddt\tidx{\curveMnfQRed}{\ell}{}}\\
		&\phantom{\evalField[{\evalFun[\pointTime]{\curveMnfQRed}}]{
			\fracdiff{\tidx{\embQ}{j}{}}
				{\tidx{\chartmapMnfQRed}{i}{}}
		}
		\Bigg(}
		-
		\evalField[
			{\evalFun[
				{\evalFun[\pointTime]{\lift{\curveMnfQRed}}}
			]{\embTanSpace}}
		]{
			\sfracdiff{\Lag}
				{\tidx{\chartmapTanSpaceMnfQ}{\dimMnfQ + j}{}}
				{\tidx{\chartmapTanSpaceMnfQ}{\dimMnfQ + k}{}\,}
		}
		\evalField[{\evalFun[\pointTime]{\curveMnfQRed}}]{
			\fracdiff{\tidx{\embQ}{k}{}}
				{\tidx{\chartmapMnfQRed}{\ell}{}}
		}
		\evalField[\pointTime]{\sddt\tidx{\curveMnfQRed}{\ell}{}}
		\Bigg)
	\end{aligned}
	\end{equation}
	for $1\leq i \leq \dimMnfRed$ where the right-hand side sums over $1 \leq j,k \leq \dimMnfQ$ and $1 \leq \ell,p \leq \dimMnfQRed$ by the Einstein summation convention \eqref{eq:einstein_summation_convention}. In bold notation, the reduced Euler--Lagrange equations read
	\begin{equation}\label{eq:red_euler_lagrange_coord}
	\begin{aligned}
		\fzero =&\; \rT{\evalField[
			{\evalFun[\pointTime]{\curveMnfQRedCoord}}
		]{
			\fD \embQCoord
		}}
		\Bigg(
			\evalField[{\evalFun[
				{\evalFun[\pointTime]{\curveMnfQRedCoord}},
				{\evalField[\pointTime]{\ddt \curveMnfQRedCoord}}
			]{\embTanSpaceCoord}}]{
				\fD_{\pointQCoord} \LagCoord
			}
			-
			\evalField[{\evalFun[
				{\evalFun[\pointTime]{\curveMnfQRedCoord}},
				{\evalField[\pointTime]{\ddt \curveMnfQRedCoord}}
			]{\embTanSpaceCoord}}]{
				\fD^2_{\velocityCoord\pointQCoord} \LagCoord
			}
			\evalField[
				{\evalFun[\pointTime]{\curveMnfQRedCoord}}
			]{
				\fD \embQCoord
			}
			{\evalField[\pointTime]{\ddt \curveMnfQRedCoord}}\\
			&\;\phantom{
				\rT{\evalField[
					{\evalFun[\pointTime]{\curveMnfQRedCoord}}
				]{
					\fD \embQCoord
				}}
				\Bigg(
			}
			-
			\evalField[{\evalFun[
				{\evalFun[\pointTime]{\curveMnfQRedCoord}},
				{\evalField[\pointTime]{\ddt \curveMnfQRedCoord}}
			]{\embTanSpaceCoord}}]{
				\fD^2_{\velocityCoord\velocityCoord} \LagCoord
			}
			\evalField[{
				{\evalFun[\pointTime]{\curveMnfQRedCoord}}
			}]{
				\fD^2 \embQCoord
			}
			\left(
			\evalField[\pointTime]{\ddt \curveMnfQRedCoord}
			\otimes
			\evalField[\pointTime]{\ddt \curveMnfQRedCoord}
			\right)
			\\
			&\;\phantom{
				\rT{\evalField[
					{\evalFun[\pointTime]{\curveMnfQRedCoord}}
				]{
					\fD \embQCoord
				}}
				\Bigg(
			}
			-
			\evalField[{\evalFun[
				{\evalFun[\pointTime]{\curveMnfQRedCoord}},
				{\evalField[\pointTime]{\ddt \curveMnfQRedCoord}}
			]{\embTanSpaceCoord}}]{
				\fD^2_{\velocityCoord\velocityCoord} \LagCoord
			}
			\evalField[
				{\evalFun[\pointTime]{\curveMnfQRedCoord}}
			]{
				\fD \embQCoord
			}
			{\evalField[\pointTime]{\sddt \curveMnfQRedCoord}}
		\Bigg)
		\in \R^{\dimMnfQRed}.
	\end{aligned}
\end{equation}
By construction, the reduced Lagrangian system fulfills the Euler--Lagrange equations for the reduced Lagrangian $\LagRed$. Thus, \Cref{thm:euler_lagrange_energy} guarantees that along the lift of the solution curve $\curveMnfQRed$, the \emph{reduced energy}
	\begin{align*}
		\energyRed\colon \T\mnfQRed \to \R,\qquad
	\pointMnfQTanSpaceRed = \left(\pointQRed, \tidx{\velocityRed}{i}{} \basisMnfQRedTanAt{i}\right)
	\mapsto
	\tidx{\velocityRed}{j}{}
	\evalField[\pointMnfQTanSpaceRed]{
		\fracdiff{\LagRed}
			{\chartmapMnfQRedVeli{j}}
	}
	- \evalFun[\pointMnfQTanSpaceRed]{\LagRed}
	\end{align*}
	is preserved. Moreover, it holds $\energyRed \equiv \energy \circ \embTanSpace$.

Following the construction in \Cref{subsec:lagrange} and assuming that the reduced Lagrangian~$\LagRed$ is regular, we can formulate a reduced vector field $\vfLagRed \in \TrsFieldMnfRed[1,0]$ for the reduced Euler--Lagrange equations \eqref{eq:red_euler_lagrange_coord}. Indeed, we obtain
for a point $\pointMnfQTanSpaceRed = \left(\pointQRed, \tidx{\velocityRed}{i}{} \basisMnfQRedTanAt[\pointQRed]{i}\right) \in \T\mnfQRed$ as
\begin{equation}\label{eq:red_vf_euler_lagrange}
	\begin{aligned}
		\evalField[\pointMnfQTanSpaceRed]{\vfLagRed}
		&=\vcentcolon
\tidx{\velocityRed}{i}{}
\basisTMnfQRedTanPosAt{i} +
\tidx{(\evalField[\pointMnfQTanSpaceRed]{\sigma})}{i}{\, \ell}
\Bigg(
\tidx{\left(
\evalField[
	{\evalFun[
		\pointMnfQTanSpaceRed
	]{\embTanSpace}}
]{\vfLag}\right)}{\dimMnfQ + \ell}{}
-
\evalField[{\pointQRed}]{
	\sfracdiff{\tidx{\embQ}{\ell}{}}
		{\tidx{\chartmapMnfQRed}{r}{}}
		{\tidx{\chartmapMnfQRed}{p}{}\,}
}
{\tidx{\velocityRed}{p}{}}\,
{\tidx{\velocityRed}{r}{}}
\Bigg)
\basisTMnfQRedTanVelAt{i}
		\end{aligned}
	\end{equation}
 with indices $1\leq k,\ell \leq\dimMnfQ$ and $1\leq i,j,p,r\leq \dimMnfQRed$ and
\begin{displaymath}
	\tidx{(\evalField[\pointMnfQTanSpaceRed]{\sigma})}{i}{\, \ell} \vcentcolon=
\tidx{(\evalField[
	\pointMnfQTanSpaceRed
]{\metricVRed})}{ij}{}\;
\evalField[{\pointQRed}]{
	\fracdiff{\tidx{\embQ}{k}{}}
		{\tidx{\chartmapMnfQRed}{j}{}}
}
\tidx{\left(\evalField[
	{\evalFun[
		\pointMnfQTanSpaceRed
	]{\embTanSpace}}
]{\metricV}\right)}{}{k\ell}
.
\end{displaymath}

In order to relate this reduction to our framework, we show in the following that
the reduced Euler--Lagrangian vector field~\eqref{eq:red_vf_euler_lagrange} can be interpreted as a \GMG reduction \eqref{eq:gmg_reduction} of the Euler--Lagrangian vector field~\eqref{eq:def_vfLag} if an appropriate tensor field is selected.
We refer to this as the \emph{Lagrangian manifold Galerkin} (\LMG).
With the nondegenerate tensor field~$\metricV$ from \eqref{eq:def_metricV},
we define a tensor field~$\metricLMG \in \TrsFieldMnf[0,2]$ on $\mnf = \T\mnfQ$ with
\begin{equation}
	\label{eqn:LMGtensorField}
	\evalField[\pointMnfQTanSpace]{\metricLMG}
\vcentcolon= \tidx{\left(
	\evalField[\pointMnfQTanSpace]{\metricQ}
 \right)}{}{ij}\;
\basisTMnfQCoTanVelAt{i}
\otimes
\basisTMnfQCoTanPosAt{j}
+
\tidx{\left(
	\evalField[\pointMnfQTanSpace]{\metricV}
\right)}{}{ij}\;
\basisTMnfQCoTanPosAt{i}
\otimes
\basisTMnfQCoTanVelAt{j},
\end{equation}
where $\tidx{(
	\evalField[\pointMnfQTanSpace]{\metricQ}
)}{}{ij}$ are additional components.
A typical choice could be $
\tidx{(
	\evalField[\pointMnfQTanSpace]{\metricQ}
)}{}{ij}
= \tidx{(
	\evalField[\pointMnfQTanSpace]{\metricV}
)}{}{ij}$.
In bold notation, the tensor field reads
\begin{align}\label{eq:lmg_metric_coord}
		\evalField[\pointMnfQTanSpaceCoord]{\metricLMGCoord} = \begin{pmatrix}
			\fzero & \evalField[\pointMnfQTanSpaceCoord]{\metricVCoord}\\
			\evalField[\pointMnfQTanSpaceCoord]{\metricQCoord} & \fzero
		\end{pmatrix}.
\end{align}

The associated reduced tensor field is denoted with $\metricLMGRed$ (as in \Cref{subsec:GMG}).
Assuming that $\metricLMGRed$ is nondegenerate,
we define the \LMG reduction map
\begin{equation}
	\label{eqn:lmg_reduction}
	\lmgReduction\colon \T\mnf \supseteq \vecbunSub \to\T\mnfRed, \qquad (\point,\velocity)
\mapsto
\left(
	\redMapPoint(\point),
	\evalFun[{\velocity}]{\left(
		\sharp_{\metricLMGRed}
		\circ
		\evalField[{{\evalFun{\redMapPoint}}}]{\dualdiff{\emb}}
		\circ
		\flat_{\metricLMG}
	\right)}
\right).
\end{equation}
The \LMG reduction map~\eqref{eqn:lmg_reduction} is a particular case of a \GMG reduction map, and thus, we immediately obtain from \Cref{thm:gmg} that $\lmgReduction$ is a reduction map for the lifted embedding~$\emb$.

\begin{theorem}\label{thm:equivalence_solve_lmg}
	Consider the \ROM obtained by reducing the Euler--Lagrange vector field with $\lmgReduction$. Then solving this \ROM for $\curveRed$ is equivalent to solving the reduced Euler--Lagrange equations \eqref{eq:red_euler_lagrange} for $\curveMnfQRed$ with $\evalFun[\pointTime]{\curveRed} = \evalFun[\pointTime]{\lift{\curveMnfQRed}}$.
\end{theorem}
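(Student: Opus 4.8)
The plan is to prove the theorem by establishing a single pointwise identity: the \ROM vector field obtained by reducing the Euler--Lagrange vector field $\vfLag$ with $\lmgReduction$ coincides, chart by chart, with the reduced Euler--Lagrange vector field $\vfLagRed$ of \eqref{eq:red_vf_euler_lagrange}. Granting this identity, the equivalence is immediate: $\vfLagRed$ was constructed precisely so that its integral curves are the lifts $\lift{\curveMnfQRed}$ of solutions $\curveMnfQRed$ of the reduced Euler--Lagrange equations \eqref{eq:red_euler_lagrange} for the reduced Lagrangian $\LagRed = \pullback{\embTanSpace}{\Lag}$, this being exactly the first-order formulation of \Cref{subsec:lagrange} applied to $(\mnfQRed,\LagRed)$. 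The initial data match directly, since the \ROM initial value $\redTanSpace(\pointQInit,\velocityInit) = (\pointQInitRed,\velocityInitRed)$ is the reduced initial value fixed after \eqref{eqn:Lagrangian_red}. Thus everything reduces to the vector-field identity.

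To prove that identity I would compute in bold notation, inserting the \LMG data into the \GMG projection formula \eqref{eq:gmg_projection_coord}. Abbreviating $\fV \vcentcolon= \evalField[\pointQRedCoord]{\fD \embQCoord}$ and $\fB \vcentcolon= \evalField[\pointQRedCoord]{\fD(\evalField[(\cdot)]{\fD \embQCoord}\velocityRedCoord)}$, the lifted embedding has Jacobian $\evalField[(\pointQRedCoord,\velocityRedCoord)]{\fD \embTanSpaceCoord} = \begin{smallbmatrix}\fV & \fzero\\ \fB & \fV\end{smallbmatrix}$ by the display following \Cref{def:lifted_emb}, while $\metricLMGCoord = \begin{smallbmatrix}\fzero & \metricVCoord\\ \metricQCoord & \fzero\end{smallbmatrix}$ by \eqref{eq:lmg_metric_coord}, with $\metricVCoord,\metricQCoord$ evaluated at the embedded point. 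Block multiplication via \eqref{eq:pmetric_red_coord} then yields the reduced tensor
\begin{equation*}
	\rTb{\evalField[\pointRedCoord]{\fD\embTanSpaceCoord}}\,\metricLMGCoord\,\evalField[\pointRedCoord]{\fD\embTanSpaceCoord}
	= \begin{pmatrix} \rT\fV\,\metricVCoord\,\fB + \rT\fB\,\metricQCoord\,\fV & \metricVRedCoord\\ \metricQRedCoord & \fzero \end{pmatrix},
\end{equation*}
with $\metricVRedCoord = \rT\fV\,\metricVCoord\,\fV$ and $\metricQRedCoord = \rT\fV\,\metricQCoord\,\fV$. Hence the nondegeneracy of $\metricLMGRed$ required for \eqref{eqn:lmg_reduction} is equivalent to invertibility of both anti-diagonal blocks $\metricVRedCoord$ and $\metricQRedCoord$, and a short chain-rule computation identifies $\metricVRedCoord = \fD^2_{\velocityRedCoord\velocityRedCoord}\LagRedCoord$, so regularity of $\LagRed$ supplies invertibility of $\metricVRedCoord$. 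The inverse of this anti-block-triangular matrix is explicit, which lets me assemble the projection.

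The crux is the final simplification. Since the velocity block of $\vfLag$ at the embedded point equals $\fV\velocityRedCoord$ by construction of $\embTanSpace$, forming $\rTb{\evalField[\pointRedCoord]{\fD\embTanSpaceCoord}}\,\metricLMGCoord\,\vfLag$ and multiplying by the block inverse makes the $\metricQ$-row collapse to $\metricQRedCoord^{-1}\metricQRedCoord\velocityRedCoord = \velocityRedCoord$, reproducing the position part of $\vfLagRed$. In the $\metricV$-row, the terms carrying $\fB$ that come from the off-diagonal block of the inverse cancel against those coming from $\vfLag$, and all $\metricQ$-dependence drops out (confirming that \eqref{eq:red_vf_euler_lagrange} contains no $\metricQ$), leaving exactly $\metricVRedCoord^{-1}\rT\fV\,\metricVCoord(\fa - \fB\velocityRedCoord)$, where $\fa$ is the acceleration block of \eqref{eq:lag_fo_coord}. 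Recognizing $\metricVRedCoord^{-1}\rT\fV\,\metricVCoord$ as the tensor $\sigma$ of \eqref{eq:red_vf_euler_lagrange} and $\fB\velocityRedCoord$ as its second-derivative contraction $\sfracdiff{\tidx{\embQ}{\ell}{}}{\tidx{\chartmapMnfQRed}{r}{}}{\tidx{\chartmapMnfQRed}{p}{}}\tidx{\velocityRed}{p}{}\tidx{\velocityRed}{r}{}$, this is precisely the velocity block of $\vfLagRed$. I expect this cancellation—verifying that the anti-diagonal structure of $\metricLMG$ in \eqref{eqn:LMGtensorField} forces both the auxiliary block $\rT\fV\,\metricVCoord\,\fB + \rT\fB\,\metricQCoord\,\fV$ and every $\metricQ$-term to disappear from the final expression—to be the main obstacle; the remaining steps are bookkeeping in the Einstein convention.
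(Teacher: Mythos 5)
Your proposal is correct and follows essentially the same route as the paper's proof in \Cref{appx:proof_equivalence_solve_lmg}: reduce the theorem to the pointwise identity $\lmgReduction(\vfLag) = \vfLagRed$, compute the pullback tensor $\metricLMGRed$ (your block matrix with $\metricQVRedCoord = \rT\fV\metricVCoord\fB + \rT\fB\metricQCoord\fV$ in the corner matches the paper's index-notation result exactly), exploit its anti-block-triangular inverse, and verify that the $\metricQ$-terms cancel, leaving $\inv{\metricVRedCoord}\rT\fV\metricVCoord(\fa - \fB\velocityRedCoord)$. The only difference is presentational — you work in bold/block-matrix notation where the paper carries out the same computation in Einstein index notation — plus your added (correct) observation that regularity of $\LagRed$ supplies invertibility of the block $\metricVRedCoord = \fD^2_{\velocityRedCoord\velocityRedCoord}\LagRedCoord$.
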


\begin{proof}
	 \Crefpoint{appx:proof_equivalence_solve_lmg}.
\end{proof}

We conclude this section with three remarks.

\begin{remark}
	In the special case of a classical \MOR $\mnfQ = \R^{\dimMnfQ}$, $\mnfQRed=\R^{\dimMnfQRed}$ with a linear embedding $\embQCoord(\pointQRedCoord) = \fV \pointQRedCoord$ as in \Cref{ex:linear_embTanSpaceCoord}, a linear point reduction $\redMnfQCoord(\pointQCoord) = \rT\fV \pointQCoord$, and a quadratic Lagrangian $\LagCoord$, the reduced Euler--Lagrange equations \eqref{eq:red_euler_lagrange_coord} recover the \ROM from \cite{Lall2003}.
	In our framework that relates to the choice $\mnf = \R^{2\dimMnfQ}$, $\mnfRed = \R^{2\dimMnfQRed}$, $\embTanSpace$ as in~\Cref{ex:linear_embTanSpaceCoord}, and $\smash{\redMapLMGCoord (\velocityCoord_{\pointQCoord}, \velocityCoord_{\velocityCoord}) = \rT{\big(
		\rT{\velocityCoord_{\pointQCoord}} \rT{\fV},\;
		\rT{\velocityCoord_{\velocityCoord}} \rT{\fV}
	\big)}}$.
\end{remark}

\begin{remark}
	In \cite{Lall2003}, the authors argue that the reduced Euler--Lagrange equations cannot be obtained from a projection with the embedding $\embQ$ of the first-order system (which is formulated with the Euler--Lagrange vector field \eqref{eq:def_vfLag} in the scope of this work).
	This is no contradiction to our work since we suggest a projection based on the lifted embedding $\embTanSpace$ from \Cref{def:lifted_emb} to obtain the reduced Euler--Lagrange equations via a reduction of the Euler--Lagrange vector field.
\end{remark}

\begin{remark}[Second-order derivatives of $\embQ$]
	The reduced Euler--Lagrange equations require the computation of second-order derivatives of $\embQ$, which might be computationally intensive. Notably, the formulation of the \ROM in structure-preserving \MOR for Hamiltonian systems presented in the following subsection is independent of second-order derivatives of the embedding $\embQ$.
\end{remark}

\subsection{MOR on manifolds for Hamiltonian systems}
\label{subsec:MORHamiltonian}
Lastly, we assume to be given a Hamiltonian system $(\mnf, \symplForm, \Ham)$ as \FOM and demonstrate how structure-preserving \MOR on manifolds can be formulated.
The procedure works analogously to the \GMG from \Cref{subsec:GMG},
while choosing the symplectic form $\symplForm$ as the nondegenerate tensor field $\tf = \symplForm$.
First, we assume that the \stepApproximation\ step is completed
and we are given a reduced manifold $\mnfRed$ and
a smooth embedding $\emb \in \calC^{\infty}(\mnfRed, \mnf)$ fulfilling \Cref{ass:reducedPseudoRiemannian}, i.e., $\pullback{\emb}{\symplForm}$ is nondegenerate.
We show at the end of this section \Crefpoint{lem:sympl_from_nondeg} that this assumption is enough that $\symplFormRed \vcentcolon= \pullback{\emb}{\symplForm}$ is a symplectic form and $(\mnfRed, \symplFormRed)$ is a symplectic manifold.
In this case, the embedding $\emb\colon (\mnfRed, \symplFormRed) \to (\mnfSub, \restrict{\symplForm}{\mnfSub})$ is a symplectomorphism.
Second, we use the reduction map
\begin{equation}
	\label{eq:smg_reduction}
	\smgReduction\colon \T\mnf \supseteq \vecbunSub \to \T\mnfRed,\qquad
(\point,\velocity)
\mapsto
\left(
	\redMapPoint(\point),
	\evalFun[{\velocity}]{\left(
		\sharp_{\symplFormRed}
		\circ
		\evalField[{{\evalFun{\redMapPoint}}}]{\dualdiff{\emb}}
		\circ
		\flat_{\symplForm}
	\right)}
\right),
\end{equation}
which we refer to as the \emph{symplectic manifold Galerkin} (\SMG) reduction map.
The \SMG reduction map is a special case of the \GMG reduction map \eqref{eq:gmg_reduction} with $\tf = \symplForm$ and $\tfRed = \symplFormRed$,
and, thus, we obtain from \Cref{thm:gmg} that $\smgReduction$ is a reduction map for $\emb$.
Hence, the \SMG reduction fits our \MOR framework from \Cref{sec:gen_frame} and it defines a \ROM by \eqref{eq:rom},
which we refer to as the \SMG-\ROM.
It remains to show that the \SMG-\ROM indeed is a Hamiltonian system,
which was the motivation for preserving the underlying structure.

\begin{theorem}\label{thm:smg_rom_ham_sys}
	The \SMG-\ROM is a Hamiltonian system $(\mnfRed, \symplFormRed, \HamRed)$ with the \emph{reduced Hamiltonian} $\HamRed \vcentcolon= \pullback{\emb} \Ham = \Ham \circ \emb$.
\end{theorem}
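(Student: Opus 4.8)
The plan is to verify the two defining ingredients of a Hamiltonian system separately: that $(\mnfRed, \symplFormRed)$ is a symplectic manifold, and that the \ROM vector field $\vfRed$ produced by the \SMG reduction coincides with the Hamiltonian vector field $\isoSharp[\symplFormRed]{\diff\HamRed}$ of the reduced Hamiltonian $\HamRed = \pullback{\emb}\Ham$. The first ingredient is exactly the content of the forthcoming \Cref{lem:sympl_from_nondeg}, which upgrades the nondegeneracy granted by \Cref{ass:reducedPseudoRiemannian} to a full symplectic structure: skew-symmetry of $\symplFormRed = \pullback{\emb}\symplForm$ is inherited from $\symplForm$ through the pullback \eqref{eq:pullback}, and closedness follows since the pullback commutes with the exterior derivative, so $\diff{\left( \pullback{\emb}\symplForm \right)} = \pullback{\emb}{\left(\diff\symplForm\right)} = 0$. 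I would simply invoke that lemma at this point rather than re-derive it.

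The heart of the proof is the second ingredient, which I would carry out pointwise. Fix $\pointRed \in \mnfRed$ and abbreviate $\pointSub = \emb(\pointRed)$. By the definition of the \ROM vector field together with the \SMG tangent reduction \eqref{eq:smg_reduction}, using the point projection property \eqref{eq:projProperty:pointProjection} to replace $\redMapPoint(\pointSub)$ by $\pointRed$, we obtain
\[
	\evalField[\pointRed]{\vfRed}
	= \left( \sharp_{\symplFormRed} \circ \evalField[\pointRed]{\dualdiff{\emb}} \circ \flat_{\symplForm} \right)\!\left( \evalField[\pointSub]{\vfHam} \right).
\]
Substituting the definition $\evalField[\pointSub]{\vfHam} = \isoSharp[\symplForm]{\evalField[\pointSub]{\diff\Ham}}$ and cancelling $\flat_{\symplForm} \circ \sharp_{\symplForm} = \id$ via the inverse relation \eqref{eq:iso_sharp_inv}, this telescopes to $\evalField[\pointRed]{\vfRed} = \sharp_{\symplFormRed}\big( \evalField[\pointRed]{\dualdiff{\emb}}(\evalField[\pointSub]{\diff\Ham}) \big)$.

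The last step is to identify the covector in the argument as the differential of the reduced Hamiltonian. Applying the cotangent chain rule \eqref{eq:chain_rule_cotan} with $F = \emb$ and $G = \Ham$ yields $\evalField[\pointRed]{\dualdiff{\emb}}(\evalField[\pointSub]{\diff\Ham}) = \evalField[\pointRed]{\diff{\left(\pullback{\emb}\Ham\right)}} = \evalField[\pointRed]{\diff\HamRed}$, so that $\evalField[\pointRed]{\vfRed} = \isoSharp[\symplFormRed]{\evalField[\pointRed]{\diff\HamRed}}$. Since $\pointRed$ was arbitrary, $\vfRed$ is precisely the Hamiltonian vector field of $\HamRed$ with respect to $\symplFormRed$, and combined with \Cref{lem:sympl_from_nondeg} this shows the \SMG-\ROM is the Hamiltonian system $(\mnfRed, \symplFormRed, \HamRed)$. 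I do not expect a genuine obstacle in the computation itself, as it is a clean composition of musical isomorphisms; the only care needed is the bookkeeping of evaluation points (ensuring $\redMapPoint(\pointSub) = \pointRed$ so that the pullback $\evalField[\pointRed]{\dualdiff{\emb}}$ is evaluated consistently), while the one nontrivial structural fact, commuting the pullback past the exterior derivative to obtain closedness of $\symplFormRed$, is isolated in the separate lemma.
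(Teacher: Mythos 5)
Your proposal is correct and follows essentially the same route as the paper: the displayed computation in the paper's proof is exactly your chain of substitutions (point projection property, $\sharp_{\symplForm}\circ\flat_{\symplForm}$ cancellation via \eqref{eq:iso_sharp_inv}, then the cotangent chain rule \eqref{eq:chain_rule_cotan} to identify $\evalField[\pointRed]{\diff{\HamRed}}$), and the symplectic-structure ingredient is likewise delegated to \Cref{lem:sympl_from_nondeg}. No gaps.
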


\begin{proof}
	The \ROM vector field with the \SMG reduction \eqref{eq:smg_reduction} reads
with (a) equations~\eqref{eq:projProperty:pointProjection} and~\eqref{eq:iso_sharp_inv},
and (b) equation~\eqref{eq:chain_rule_cotan}
	\begin{equation}\label{eq:smg_gmg_pullback}
	\begin{aligned}
		\evalField[\pointSub]{\smgReduction}(\evalField[\pointSub]{\vfHam})
=&\;
\evalFun[{
	\isoSharp[\symplForm]{
		\evalField[{\evalFun[\pointRed]{\emb}}]{\diff{\Ham}}
	}
}]{
\left(
	\sharp_{\symplFormRed}
	\circ
	\evalField[(\red \circ \emb)(\pointRed)]{\dualdiff{\emb}}
	\circ
	\flat_{\symplForm}
\right)
}\\
\stackrel{\text{(a)}}{=}&\; \isoSharp[\symplFormRed]{
	\evalField[\pointRed]{\dualdiff{\emb}} \left(
		\evalField[{\evalFun[\pointRed]{\emb}}]{\diff{\Ham}}
	\right)
}
\stackrel{\text{(b)}}{=} \isoSharp[\symplFormRed]{
	{\evalField[\pointRed]{\diff \HamRed}}
},
	\end{aligned}
	\end{equation}
	which is exactly the Hamiltonian vector field of the Hamiltonian system $(\mnfRed, \symplFormRed,\HamRed)$.
\end{proof}

Using \eqref{eq:gmg_projection_coord}, the reduced vector field in the \SMG-\ROM in bold notation reads 
\begin{align}\label{eq:smg_projection_coord}
	\evalField[\pointSubCoord]{\smgReductionCoord}\left(
		\evalField[\pointSubCoord]{\vfHamCoord}
	\right)
	&=
	\invb{
		\rT{\evalField[\pointRedCoord]{\D\embCoord}}
		\evalField[{\pointSubCoord}]{\symplFormCoord}
		\evalField[\pointRedCoord]{\D\embCoord}
	}
	\underbrace{
	\rT{\evalField[\pointRedCoord]{\D\embCoord}}
		\evalField[\pointSubCoord]{\symplFormCoord}
		\evalField[\pointSubCoord]{\vfHamCoord}
	}_{
		=
		\rT{\evalField[\pointRedCoord]{\D\embCoord}}
		\rT{\evalField[{\pointSubCoord}]{\D \HamCoord}}
		=
		\rT{\evalField[{\pointRedCoord}]{\D \HamRedCoord}}
	}
	\in \R^{\dimMnfRed}.
 \end{align}

For a canonical Hamiltonian system, our generalization of the \SMG-\ROM is consistent with the definitions existing in the literature, which is shown by the following lemma.

\begin{lemma}
	\label{thm:special_case_smg}
	For a canonical Hamiltonian system $(\R^{2\dimMnf}, \rT\JtN, \HamCoord)$ and reduced symplectic manifold~$(\mnfRed, \symplFormRed) = (\R^{2\dimMnfRed}, \rT\Jtn)$, it holds that
	\begin{enumerate}
		\item the \SMG reduction evaluated at the base point $\pointSubCoord$ equals the symplectic inverse
		\begin{equation*}
		\evalField[\pointSubCoord]{\smgReductionCoord}\left(
			\velocityCoord
		\right)
		=
		\symplInv{\evalField[\pointRedCoord]{\D\embCoord}} \velocityCoord
		\vcentcolon=
		\Jtn\, \rT{\evalField[\pointRedCoord]{\D\embCoord}}\, \rT\JtN \velocityCoord
		\qquad \text{for all } \velocityCoord \in \R^{2\dimMnf},
		\end{equation*}
		\item the \SMG-\ROM is consistent with \cite{BucGH21}, and
		\item if, moreover, the embedding $\embCoord$ is linear,
		the \SMG-\ROM equals the symplectic Galerkin \ROM introduced in \cite{PenM2016,AfkH17}.
	\end{enumerate}
\end{lemma}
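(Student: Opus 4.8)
The plan is to dispatch the three claims in order, concentrating the genuine computation in claim~(i); claims~(ii) and~(iii) then reduce to matching the resulting formula against the reduced systems of the cited references. For claim~(i), I would begin from the coordinate form of the reduction map. Since the \SMG reduction is the \GMG reduction for the choice $\tf = \symplForm$, and since the tangent reduction is linear on each fiber, formula~\eqref{eq:gmg_projection_coord} applies to an arbitrary tangent vector $\velocityCoord \in \R^{2\dimMnf}$ at the base point $\pointSubCoord$, giving $\evalField[\pointSubCoord]{\smgReductionCoord}(\velocityCoord) = \invb{\rT{\evalField[\pointRedCoord]{\D\embCoord}} \evalField[\pointSubCoord]{\symplFormCoord} \evalField[\pointRedCoord]{\D\embCoord}} \rT{\evalField[\pointRedCoord]{\D\embCoord}} \evalField[\pointSubCoord]{\symplFormCoord} \velocityCoord$. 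Substituting the canonical form $\evalField[\pointSubCoord]{\symplFormCoord} = \rT\JtN$, the inner factor $\rT{\evalField[\pointRedCoord]{\D\embCoord}} \rT\JtN \evalField[\pointRedCoord]{\D\embCoord}$ is, by~\eqref{eq:pmetric_red_coord} applied with $\tf = \symplForm$, exactly the coordinate pullback $\evalField[\pointRedCoord]{\symplFormRedCoord}$, which equals $\rT\Jtn$ under the standing hypothesis $(\mnfRed, \symplFormRed) = (\R^{2\dimMnfRed}, \rT\Jtn)$. Invoking $\invb{\rT\Jtn} = \Jtn$ from~\eqref{eq:poisson_tensor} then collapses the expression to $\Jtn\, \rT{\evalField[\pointRedCoord]{\D\embCoord}}\, \rT\JtN \velocityCoord = \symplInv{\evalField[\pointRedCoord]{\D\embCoord}} \velocityCoord$, which is the claimed identity. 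This step is a direct substitution and presents no genuine difficulty.

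For claim~(ii), I would combine claim~(i) with \Cref{thm:smg_rom_ham_sys}. Applying the reduction map to the Hamiltonian vector field and using the underbrace identity in~\eqref{eq:smg_projection_coord}, namely $\rT{\evalField[\pointRedCoord]{\D\embCoord}} \evalField[\pointSubCoord]{\symplFormCoord} \evalField[\pointSubCoord]{\vfHamCoord} = \rT{\evalField[\pointRedCoord]{\D \HamRedCoord}}$, the \SMG-\ROM vector field becomes $\Jtn\, \rT{\evalField[\pointRedCoord]{\D \HamRedCoord}}$, i.e.\ the canonical Hamiltonian vector field of the reduced Hamiltonian $\HamRed = \Ham \circ \emb$ on $(\R^{2\dimMnfRed}, \rT\Jtn)$. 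Since claim~(i) identifies the reduction with the symplectic inverse $\symplInv{\evalField[\pointRedCoord]{\D\embCoord}}$, the resulting system is precisely the reduced model constructed in \cite{BucGH21}; the verification amounts to matching this vector field with the definition given there rather than deriving anything new.

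For claim~(iii), I would specialize to a linear embedding $\evalFun[\pointRedCoord]{\embCoord} = \fV \pointRedCoord$, so that $\evalField[\pointRedCoord]{\D\embCoord} = \fV$ is constant and the pullback condition of~\eqref{eq:pmetric_red_coord} reads $\rT\fV \rT\JtN \fV = \rT\Jtn$. Claim~(i) then yields $\evalField[\pointSubCoord]{\smgReductionCoord} = \symplInv{\fV} = \Jtn \rT\fV \rT\JtN$, which is the symplectic inverse of \cite{PenM2016,AfkH17} (up to the sign convention $\symplFormCoord = \rT\JtN$ in place of $\JtN$ already discussed in the footnote to~\eqref{eq:poisson_tensor}), and the associated \ROM is exactly their symplectic Galerkin projection. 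The only recurring subtlety is the bookkeeping of these sign conventions; the main obstacle, such as it is, is that claims~(ii) and~(iii) rest on matching our formulas to the external definitions in \cite{BucGH21,PenM2016,AfkH17}, which I would make explicit in the write-up, since those constructions are not reproduced here.
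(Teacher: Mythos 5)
Your proposal is correct and follows essentially the same route as the paper: claim (i) by direct substitution of $\symplFormCoord=\rT\JtN$ into the coordinate formula \eqref{eq:smg_projection_coord}/\eqref{eq:gmg_projection_coord}, identifying the inner factor with $\symplFormRedCoord=\rT\Jtn$ and inverting, and claims (ii)--(iii) by matching the symplectomorphism condition $\rT{\evalField[\pointRedCoord]{\D\embCoord}}\rT\JtN\evalField[\pointRedCoord]{\D\embCoord}=\rT\Jtn$ and the resulting symplectic-inverse projection against the definitions in the cited references. The only cosmetic difference is that for (ii) the paper emphasizes the equivalence of the embedding assumptions (symplectomorphism versus the symplectic-embedding definition of \cite{BucGH21}, up to the sign convention) rather than the form of the reduced vector field, but both amount to the same matching exercise.
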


\begin{proof}
	By assumption, it holds $\mnf = \R^{2\dimMnf}$, $\symplFormCoord = \rT\JtN$, $\mnfRed = \R^{2\dimMnfRed}$, $\symplFormRedCoord = \rT\Jtn$.
	(i) Inserting the quantities in \eqref{eq:smg_projection_coord} yields the statement.
	(ii) For $\embCoord$ to be a symplectomorphism, i.e.,
	$
	\evalField[\pointRed]{(\pullback{\emb}\symplForm)}\,
	= \evalField[{
		\pointRed
	}]{\symplFormRed}
	$
	for all $\pointRed \in \mnfRed$,
	is with \eqref{eq:pmetric_red_coord} equivalent to
	\begin{align}\label{eq:embCoord_sympl}
		\rT{\evalField[{\pointRedCoord}]{\D\embCoord}}\;
\rT\JtN \evalField[{\pointRedCoord}]{\D\embCoord}
		= \rT\Jtn
		\qquad \text{for all }\pointRedCoord \in \R^{2\dimMnfRed}.
	\end{align}
	Considering $\rT\JtN = -\JtN$ and $\rT\Jtn = -\Jtn$ and multiplying the previous equation on both sides with $(-1)$, gives exactly the definition of a symplectic embedding from \cite[Def.~2]{BucGH21}. Thus, the assumptions on the embedding are equivalent (up to smoothness requirements).
	Moreover, the \SMG-\ROM in \cite{BucGH21} is projected with the symplectic inverse which (by point~(i)) is equivalent to the \SMG reduction map for the case assumed in the present lemma ($\mnf = \R^{2\dimMnf}$, $\symplFormCoord = \rT\JtN$, $\mnfRed = \R^{2\dimMnfRed}$, $\symplFormRedCoord = \rT\Jtn$).

	(iii) If the embedding is linear, then there exists $\fV \in \R^{2\dimMnf \times 2\dimMnfRed}$ such that $\pointSubCoord = \fV \pointRedCoord$. Then, the requirement of $\emb$ to be a symplectomorphism is equivalent to $\rT\fV \JtN \fV = \Jtn$, which is in \cite[Equation~3.2]{PenM2016} formulated as the condition that $\fV$ is a \emph{symplectic matrix}. Moreover, the symplectic inverse of $\fV$ is used to obtain the \ROM, which is, again, by point~(i), equivalent to our approach in this particular case.
\end{proof}

However, our approach extends the existing methods, as it also works (i) on general smooth manifolds (not just $\mnf = \R^{2\dimMnf}$) and (ii) even in the case $\mnf=\R^{2\dimMnf}$ for noncanonical symplectic forms $\symplFormCoord \neq \rT\JtN$. Structure-preserving \MOR for noncanonical Hamiltonian systems (for the particular case of a linear embedding) is discussed in \cite{MaboudiAfkham2018a}.
Compared to that approach, however, we use the noncanonical symplectic form prescribed by the \FOM, which generalizes the symplectic inverse straightforwardly.

It remains to show that assuming nondegeneracy of $\pullback{\emb} \symplForm$ is enough that $\pullback{\emb} \symplForm$ is a symplectic form,
which we show in the following.

\begin{lemma}\label{lem:sympl_from_nondeg}
	Consider a symplectic manifold $(\mnf, \symplForm)$,
	a smooth manifold $\mnfRed$, 
	and a smooth embedding $\emb \in \calC^{\infty} (\mnfRed, \mnf)$
	such that $\symplFormRed \vcentcolon= \pullback{\emb}{\symplForm}$ is nondegenerate.
	Then $\symplFormRed$ is a symplectic form,
	$(\mnfRed, \symplFormRed)$ is a symplectic manifold,
	and $\emb$ is a symplectomorphism.
\end{lemma}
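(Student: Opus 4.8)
The plan is to check that $\symplFormRed = \pullback{\emb}{\symplForm}$ satisfies the three defining properties of a symplectic form listed in \Cref{subsec:StructuredTensorFields}: it must be skew-symmetric, nondegenerate, and closed. Smoothness of $\symplFormRed \in \TrsFieldMnfRed[0,2]$ is automatic from the pullback construction \eqref{eq:pullback} (cf.\ \cite[Prop.~11.26]{Lee12}), and nondegeneracy is exactly the hypothesis of the lemma (a genuine assumption, since pullbacks of nondegenerate tensor fields need not be nondegenerate, cf.\ the discussion after \Cref{ass:reducedPseudoRiemannian}). Once the three properties are verified, $(\mnfRed, \symplFormRed)$ is a symplectic manifold by definition, and the symplectomorphism claim follows as explained at the end.

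First I would establish skew-symmetry. By the pullback formula \eqref{eq:pullback}, each component $\tidx{(\evalField[\pointRed]{\symplFormRed})}{}{j_1 j_2}$ equals $\tidx{(\evalField[{\evalFun[\pointRed]{\emb}}]{\symplForm})}{}{\ell_1 \ell_2}$ contracted with the two Jacobian factors of $\emb$. Interchanging the free indices $j_1 \leftrightarrow j_2$ amounts to swapping the two Jacobian factors; after relabelling the summation indices $\ell_1 \leftrightarrow \ell_2$ and invoking the skew-symmetry $\tidx{(\evalField{\symplForm})}{}{\ell_1 \ell_2} = -\tidx{(\evalField{\symplForm})}{}{\ell_2 \ell_1}$, one reads off $\tidx{(\evalField{\symplFormRed})}{}{j_1 j_2} = -\tidx{(\evalField{\symplFormRed})}{}{j_2 j_1}$, so $\symplFormRed$ is a $2$-form.

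The main obstacle is closedness. Conceptually the cleanest route uses that the pullback commutes with the exterior derivative (naturality of the exterior derivative, see \cite{Lee12}): since $\symplForm$ is closed, $\diff{(\pullback{\emb}{\symplForm})} = \pullback{\emb}{(\diff{\symplForm})} = 0$, and the coordinate form of $\diff{\symplFormRed} = 0$ for a $2$-form is precisely the condition \eqref{eq:closedness} for $\symplFormRed$. Since \Cref{subsec:StructuredTensorFields} formulates closedness through the coordinate identity \eqref{eq:closedness} rather than through an abstract exterior derivative, a self-contained argument instead inserts the pullback components from \eqref{eq:pullback} into the left-hand side of \eqref{eq:closedness} written for $\symplFormRed$ and differentiates. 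The product rule splits the result into terms that reassemble the (vanishing) pullback of the closedness expression for $\symplForm$, plus extra terms carrying second derivatives $\partial^2 \emb^{\ell}$ of the embedding. The technical heart is to show that these second-derivative terms cancel: each such factor $\partial^2 \emb^{\ell}$ is symmetric in its two lower chart indices, while it is contracted against $\symplForm$, which is skew-symmetric; pairing the terms and using the symmetry of mixed second partials then forces their sum to vanish. Carrying out this cancellation is the one nonroutine computation.

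Having verified skew-symmetry, nondegeneracy, and closedness, $\symplFormRed$ is a symplectic form and $(\mnfRed, \symplFormRed)$ is a symplectic manifold. Finally, since $\emb$ is a smooth diffeomorphism onto its image $\mnfSub$ \cite[Prop.~5.2]{Lee12}, the same three properties transfer along this diffeomorphism to show that $\restrict{\symplForm}{\mnfSub}$ is a symplectic form on $\mnfSub$, while $\pullback{\emb}{\symplForm} = \symplFormRed$ holds by construction; hence $\emb\colon (\mnfRed, \symplFormRed) \to (\mnfSub, \restrict{\symplForm}{\mnfSub})$ is a symplectomorphism in the sense of \Cref{subsec:hamiltonian}, completing the proof.
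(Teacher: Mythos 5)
Your proposal is correct and follows essentially the same route as the paper: skew-symmetry via the index swap in the pullback formula \eqref{eq:pullback}, and closedness via the fact that pullbacks commute with the exterior derivative (the paper simply cites the proof of \cite[Prop.~17.2]{Lee12} for this, whereas you additionally sketch the coordinate-level cancellation of the second-derivative terms). Your explicit treatment of the symplectomorphism claim is a harmless elaboration of what the paper leaves implicit.
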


\begin{proof}
	It is sufficient to show that $\symplFormRed = \pullback{\emb}{\symplForm}$ is a symplectic form, which in this case results in showing that $\symplFormRed$ is skew-symmetric and closed.
	The skew-symmetry is inherited for all points $\pointRed \in \mnfRed$ since with \eqref{eq:pullback}
	$$
	\tidx{(\evalField[\pointRed]{\symplFormRed})}{}{\tname{j}{1} \tname{j}{2}}
=
\tidx{(\evalField[\pointSub]{\symplForm})}{}{\tname{\ell}{1} \tname{\ell}{2}}\;
\trafoEmb{\tname{\ell}{1}}{\tname{j}{1}}
\trafoEmb{\tname{\ell}{2}}{\tname{j}{2}}
= -\tidx{(\evalField[\pointSub]{\symplForm})}{}{\tname{\ell}{2} \tname{\ell}{1}}\;
\trafoEmb{\tname{\ell}{2}}{\tname{j}{2}}
\trafoEmb{\tname{\ell}{1}}{\tname{j}{1}}
= -\tidx{(\evalField[\pointRed]{\symplFormRed})}{}{\tname{j}{2} \tname{j}{1}}.
	$$
	Closedness is inherited since the pullback of a closed form is closed again \cite[proof of Prop.~17.2]{Lee12}.
\end{proof}

Note that this is a central difference to reduced Riemannian metrics, which are automatically nondegenerate due to positive definiteness. The following example shows that the reduced tensor field can degenerate if arbitrary embeddings $\emb$ in combination with a symplectic form are considered.

\begin{example}[Example for degenerate $\pullback{\emb}\symplForm$]
	\label{ex:degenerateTensorField}
	For an arbitrary $\dimMnfRed$ with $2\dimMnfRed \leq \dimMnf$, consider $\mnf = \R^{2\dimMnf}$, $\symplFormCoord = \rT\JtN$, $\mnfRed = \R^{2\dimMnfRed}$, and the embedding
	\begin{align*}
		&\pointSubCoord = \fE \pointRedCoord&
		&\text{with}&
		\fE \vcentcolon= \begin{pmatrix}
			\fI_{2\dimMnfRed}\\
			\fzero_{2\dimMnf-2\dimMnfRed}
		\end{pmatrix}
		\in \R^{2\dimMnf \times 2\dimMnfRed}.
	\end{align*}
	In this case, it holds $\evalField[\pointRedCoord]{\D\embCoord} = \fE$ and the reduced tensor field is
	\begin{align*}
		\evalField[\pointRedCoord]{\symplFormRedCoord}
= \rT{\evalField[\pointRedCoord]{\D\embCoord}}\,
\evalField[{\pointSubCoord}]{\symplFormCoord}
\evalField[\pointRedCoord]{\D\embCoord}
= \begin{pmatrix}
	\fI_{2\dimMnfRed} & \fzero_{2\dimMnf-2\dimMnfRed}
\end{pmatrix}
\begin{pmatrix}
	\fzero_{\dimMnf} & \fI_{\dimMnf}\\
	-\fI_{\dimMnf} & \fzero_{\dimMnf}
\end{pmatrix}
\begin{pmatrix}
	\fI_{2\dimMnfRed}\\
	\fzero_{2\dimMnf-2\dimMnfRed}
\end{pmatrix}
= \fzero \in \R^{2\dimMnfRed \times 2\dimMnfRed},
	\end{align*}
	which is clearly degenerate.
\end{example}

\section{Snapshot-based generation of embedding and point reduction}
\label{sec:embedding_generation}
Another key task in \MOR is the choice of a particular embedding $\emb$ (the \stepApproximation~step in \Cref{subsubsec:MORworkflow}).
In this section, we thus consider the construction of the embedding in a data-driven setting, which is directly combined with the construction of a point reduction $\redMapPoint$.
We first introduce the data-driven setting \Crefpoint{subsec:snapshotGen} and then detail four techniques to generate an embedding and a corresponding point reduction. In \Cref{tab:methods_covered}, we present an overview of selected methods discussed in the literature and how they fit into our general framework.
Throughout the section, we assume
to be given the $N$-dimensional smooth manifold $\mnf$
and a metric $\metricMnfSymbol\colon \mnf \times \mnf \to \Rpos$.

\begin{table}
	\centering
	\caption{\MOR techniques from different references that are covered by \MPG~\eqref{eq:mpg_projection}, \GMG~\eqref{eq:gmg_reduction}, \LMG~\eqref{eqn:lmg_reduction}, and \SMG~\eqref{eq:smg_reduction} introduced in our work.}
	\label{tab:methods_covered}
	\small
	\begin{tabular}{rlll}
		\toprule
		\textbf{name} & \textbf{ref.} &  & \textbf{details}\\
		\midrule
		QPROM & \cite{Barnett2022,Geelen2022,Issan2023} & \MPG & $\redCoord$ linear, $\embCoord$ quadratic\\
		EncROM & \cite{OttMR23} & \MPG & $\redCoord, \embCoord$ autoencoders\\\midrule
		qmf & \cite{Benner2022} & \GMG & $\tfCoord \equiv \fI_{\dimMnf}$, $\redCoord$ linear, $\embCoord$ quadratic\\
		manifold Galerkin & \cite{Lee20} & \GMG & $\tfCoord$ independent of $\pointCoord$, symmetric, pos.~def.\\\midrule
		& \cite{Lall2003,Carlberg2015} & \LMG & $\tfCoord$ as in \eqref{eq:lmg_metric_coord}, $\redCoord, \embCoord$ linear\\\midrule
		symplectic Galerkin & \cite{PenM2016,AfkH17} & \SMG & $\tfCoord \equiv \rT\JtN$, $\redCoord$, $\embCoord$ linear\\
		SMG & \cite{BucGH21} & \SMG & $\tfCoord \equiv \rT\JtN$, $\redCoord, \embCoord$ autoencoders\\
		SMG-QMCL & \cite{Sharma2023} & \SMG & $\tfCoord \equiv \rT\JtN$, $\redCoord$ linear, $\embCoord$ from manifold cotangent lift\\
		\bottomrule
	\end{tabular}
\end{table}

\subsection{Snapshot-based generation}
\label{subsec:snapshotGen}
In the scope of the present work, we focus on \emph{snapshot-based generation of an embedding and a point reduction}.
Consider a finite subset $\solMnfTrain \subseteq \solMnf$ of the set of all solutions $\solMnf \subseteq \mnf$ from \eqref{eq:solution_manifold}, which is referred to as the \emph{(training-)set of snapshots} and its elements $\pointTrain \in \solMnfTrain$ as \emph{snapshots}.
Typically, the embedding and the point reduction are determined by searching in a given family of functions
$$\redEmbFamily \vcentcolon= \left\{
	(\emb, \red) \in \calC^\infty(\mnfRed, \mnf) \times \calC^\infty(\mnf, \mnfRed)
	\,\big|\,
	\red \text{ is a point reduction for } \emb
	\text{ \eqref{eq:projProperty:pointProjection}}
\right\}$$
by optimizing over a functional $\loss\colon \redEmbFamily \to \Rpos$ that measures the quality of approximation based on the snapshots $\pointTrain \in \solMnfTrain$, i.e.,
\begin{align}\label{eq:red_emb_optim}
	(\emb^\star, \red^\star)
\vcentcolon= \argmin_{(\emb, \red) \in \redEmbFamily}
\evalFun[\emb, \red]{\loss}.
\end{align}
We emphasize that \Cref{lem:emb_subspaces_from_point_red} guarantees that searching within $\redEmbFamily$ automatically yields that $\emb$ is a smooth embedding and $\mnfSub$ is an embedded submanifold. Note that for practical purposes, which we do not further consider, one might want to relax the smoothness assumptions in $\redEmbFamily$.

One well-established functional is the \emph{mean squared error} (\MSE)
\begin{align}\label{eq:lossMSE}
	\evalFun[\emb, \red]{\lossMSE} \vcentcolon=
	\frac{1}{\abs{\solMnfTrain}}
	\sum_{\pointTrain \in \solMnfTrain}
	\big(
		\metricMnf{	
			\pointTrain
		}{
			{{\evalFun[\pointTrain]{(\emb \circ \red)}}}
		}
	\big)^2
	\in \Rpos.
\end{align}
The motivation of minimizing the \MSE is that if $\evalFun[\emb, \red]{\lossMSE} = 0$, it is guaranteed that all snapshots $\pointTrain \in \solMnfTrain$ are in the image of the embedding $\emb$ and thus directly lay on the embedded submanifold, i.e., $\solMnfTrain \subseteq \mnfSub$.
In general, however, the \MSE is not equal to zero.
Nevertheless, then we know that for each addend of \eqref{eq:lossMSE} it holds that
\begin{align}\label{eq:msePointwise}
	\big(
		\metricMnf{	
			\pointTrain
		}{
			{{\evalFun[\pointTrain]{(\emb \circ \red)}}}
		}
	\big)^2
	\leq \abs{\solMnfTrain} \cdot \evalFun[\emb, \red]{\lossMSE}
\end{align}
for all snapshots $\pointTrain \in \solMnfTrain$ due to non-negativity of the respective addends.

In the following we present four examples for snapshot-based generation for the case where $\mnf=\R^{\dimMnf}$, $\mnfRed=\R^{\dimMnfRed}$, $\TpMnf = \R^{\dimMnf}$, $\TpMnfRed = \R^{\dimMnfRed}$ are Euclidean vector spaces with chart mappings $\chartmap \equiv \id_{\R^{\dimMnf}}$, $\chartmapRed \equiv \id_{\R^{\dimMnfRed}}$ and the metric $\metricMnfSymbol$ is defined by a symmetric, positive-definite matrix $\pmetricCoord \in \R^{\dimMnf \times \dimMnf}$ with
\begin{align*}
	&\pmetricNormCoord{\pointCoord}
\vcentcolon= \sqrt{
	\rT{\pointCoord} \fg  \pointCoord
},&
	&\metricMnfCoord{\pointCoord}{\pointAltCoord}
= \pmetricNormCoord{\pointCoord - \pointAltCoord},&
	&\text{for $\pointCoord, \pointAltCoord \in \R^{\dimMnf}$.}
\end{align*}
With this choice, the \MSE~\eqref{eq:lossMSE} in coordinates reads
\begin{align}\label{eq:mse_in_coord}
	\evalFun[\embCoord, \redCoord]{\lossMSECoord}
= \frac{1}{\abs{\solMnfTrain}}
\sum_{\pointTrainCoord \in \solMnfTrain}
\pmetricNormCoord{
	\pointTrainCoord - \evalFun[\pointTrainCoord]{(\embCoord \circ \redCoord)}
}^2.
\end{align}
For each of the four presented approaches, we
\begin{enumerate}
	\item formulate the respective family of functions as a subset of $\redEmbFamily$,
	\item describe how the \MSE functional \eqref{eq:lossMSE} is optimized,
	\item refer to existing work that uses the respective technique.
\end{enumerate}

\subsection{Linear subspaces}\label{subsec:classical_mor}
As discussed in \Cref{rem:subspace_MOR}, linear-subspace \MOR is included in our framework if the embedding $\embCoord$ and the point reduction $\redCoord$ are linear maps
\begin{align}\label{eq:mappings_classical}
	&\evalFun[\pointRedCoord]{\embClassicCoord}
\vcentcolon= \fV \pointRedCoord,&
	&\evalFun[\pointCoord]{\redClassicCoord}
	\vcentcolon=
	\rT\fW \pointCoord,
\end{align}
based on the matrices $\fV, \fW \in \R^{\dimMnf \times \dimMnfRed}$
with $\dimMnfRed \ll \dimMnf$. Due to the linearity of the mapping $\embClassicCoord$, we get that $\evalFun[\R^{\dimMnfRed}]{\embClassicCoord}$ is a linear subspace of $\mnf = \R^{\dimMnf}$, which is why we refer to this technique as \emph{linear-subspace \MOR}.
We formulate the respective family of functions by
\begin{align*}
	\redEmbClassicalFamily
\vcentcolon= \left\{
	(\embClassicCoord,
	\redClassicCoord)
	\text{ from \eqref{eq:mappings_classical}}
	\,\Big|\,
	\fV, \fW \in \R^{\dimMnf \times \dimMnfRed}
	\text{ such that }
	\rT\fW \fV = \tname{\fI}{\dimMnfRed}
\right\}.
\end{align*}

\begin{proposition}
	The family of functions $\redEmbClassicalFamily$ is a subset of $\redEmbFamily$.
\end{proposition}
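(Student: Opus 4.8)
The plan is to verify, for an arbitrary pair $(\embClassicCoord, \redClassicCoord) \in \redEmbClassicalFamily$, the two conditions defining membership in $\redEmbFamily$: that both maps are smooth, and that $\redClassicCoord$ is a point reduction for $\embClassicCoord$ in the sense of the point projection property~\eqref{eq:projProperty:pointProjection}.

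First I would address smoothness. By definition, $\embClassicCoord$ and $\redClassicCoord$ are the linear maps $\evalFun[\pointRedCoord]{\embClassicCoord} = \fV \pointRedCoord$ and $\evalFun[\pointCoord]{\redClassicCoord} = \rT\fW \pointCoord$ from~\eqref{eq:mappings_classical}. Since every component of a linear map between Euclidean spaces is a homogeneous degree-one polynomial in the input coordinates, both maps are of class $\calC^\infty$; that is, $\embClassicCoord \in \calC^\infty(\R^{\dimMnfRed}, \R^{\dimMnf})$ and $\redClassicCoord \in \calC^\infty(\R^{\dimMnf}, \R^{\dimMnfRed})$, as required.

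Next I would verify the point projection property. For every $\pointRedCoord \in \R^{\dimMnfRed}$ one computes
\[
\evalFun[\pointRedCoord]{(\redClassicCoord \circ \embClassicCoord)}
= \rT\fW \left( \fV \pointRedCoord \right)
= \left( \rT\fW \fV \right) \pointRedCoord
= \tname{\fI}{\dimMnfRed}\, \pointRedCoord
= \pointRedCoord,
\]
where the third equality uses exactly the biorthogonality constraint $\rT\fW \fV = \tname{\fI}{\dimMnfRed}$ built into the definition of $\redEmbClassicalFamily$. Hence $\redClassicCoord \circ \embClassicCoord = \id_{\R^{\dimMnfRed}}$, which is the point projection property~\eqref{eq:projProperty:pointProjection}, so $\redClassicCoord$ is a point reduction for $\embClassicCoord$.

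There is no genuine obstacle here: the only nontrivial ingredient, the identity $\redClassicCoord \circ \embClassicCoord = \id_{\R^{\dimMnfRed}}$, is precisely the constraint imposed in the definition of $\redEmbClassicalFamily$, so the inclusion is immediate. I note in passing that combining this with \Cref{lem:emb_subspaces_from_point_red} then guarantees that $\embClassicCoord$ is a smooth embedding and $\evalFun[\R^{\dimMnfRed}]{\embClassicCoord}$ is an embedded submanifold of $\R^{\dimMnf}$, consistent with the discussion in \Cref{rem:subspace_MOR}.
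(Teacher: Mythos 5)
Your proof is correct and follows essentially the same route as the paper's: the key step in both is the computation $\rT\fW \fV \pointRedCoord = \pointRedCoord$ from the biorthogonality constraint, establishing the point projection property~\eqref{eq:projProperty:pointProjection}. Your additional remarks on smoothness of linear maps are left implicit in the paper but are harmless and accurate.
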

\begin{proof}
	The assumption $\rT\fW \fV = \tname{\fI}{\dimMnfRed}$ implies the point projection property \eqref{eq:projProperty:pointProjection} with
	\begin{align*}
		&\evalFun[\pointRedCoord]{(\redClassicCoord \circ \embClassicCoord)}
		=
		\rT\fW \fV \pointRedCoord
		= \pointRedCoord.\qedhere
	\end{align*}
\end{proof}

In the case of a \emph{Galerkin projection}, i.e., $\rT\fW = \rT\fV \pmetricCoord$,
solving \eqref{eq:red_emb_optim} for $\redEmbClassicalFamily$ simplifies to determine the basis matrix
\begin{align}\label{eq:pod_functional}
	&\fV^\ast
	= \argmin_{\substack{
		\fV \in \R^{\dimMnf \times \dimMnfRed}\\
		\rT\fV \pmetricCoord \fV = \tname{\fI}{\dimMnfRed}
	}}
	\sum_{\pointTrainCoord \in \solMnfTrain}
	\pmetricNormCoord{
		(\tname{\fI}{\dimMnf} - \fV \rT\fV \pmetricCoord ) \pointTrainCoord
	}^2,
\end{align}
which is known as the \emph{proper orthogonal decomposition} (\POD).
An optimal solution can be computed with the truncated singular value decomposition (see, e.g., \cite{Volkwein2013}).

For structure-preserving linear-subspace \MOR techniques, $\redEmbClassicalFamily$ may have to be restricted to a class that preserves the respective structure. For example, for the structure-preserving \MOR for Hamiltonian systems, the \SMG is used, which assumes $\embCoord$ to be a symplectomorphism and $\fW = \JtN \fV \rT\Jtn$ is the so-called symplectic inverse. Such \MOR techniques are used, e.g., in \cite{PenM2016,AfkH17,Buchfink2019}.

\subsection{Quadratic manifolds} \label{subsec:MOR_quadratic_manifolds}
Recently, so-called \emph{\MOR on quadratic manifolds} has become an active field of research \cite{Barnett2022,Geelen2022,Benner2022,Issan2023,Sharma2023}.
In our terms,
the embedding and point reduction are set to
\begin{align}\label{eq:mappings_quad}
	&\evalFun[\pointRedCoord]{\embQuadCoord}
\vcentcolon= \tname{\fA}{2} \quadKron{\pointRedCoord} + \tname{\fA}{1} \pointRedCoord + \tname{\fA}{0},&
	\redQuadCoord(\pointCoord)
\vcentcolon= \rT{\tname{\fA}{1}} \left( \pointCoord - \tname{\fA}{0} \right),
\end{align}
with $\tname{\fA}{2} \in \R^{\dimMnf \times {\dimMnfRed(\dimMnfRed+1)/2}}$, $\tname{\fA}{1} \in \R^{\dimMnf \times \dimMnfRed}$, $\tname{\fA}{0} \in \R^{\dimMnf}$. By $\quadKron{(\cdot)}\colon \R^{\dimMnfRed} \to \R^{\dimMnfRed(\dimMnfRed+1)/2}$, $\pointRedCoord \mapsto \quadKron{\pointRedCoord}$, we denote the \emph{symmetric Kronecker product}, which produces all pairwise products of components $\unstack{\pointRedCoord}{i}{} \in \R$ of $\pointRedCoord$ for $1 \leq i \leq \dimMnfRed$ while neglecting redundant entries, i.e.,
\begin{equation*}
\quadKron{\pointRedCoord}
=
\rT{\stack{
	\unstack{\pointRedCoord}{1}{}
	\cdot \unstack{\pointRedCoord}{1}{},\;
	\unstack{\pointRedCoord}{1}{}
	\cdot \unstack{\pointRedCoord}{2}{},\;
	\unstack{\pointRedCoord}{2}{}
	\cdot \unstack{\pointRedCoord}{2}{},\;
	\dots,\;
	\unstack{\pointRedCoord}{\dimMnfRed}{}
	\cdot \unstack{\pointRedCoord}{\dimMnfRed}{}
}{}}
\in \R^{\dimMnfRed(\dimMnfRed+1)/2}.
\end{equation*}
The respective family of functions is
\begin{align}\label{eq:family_quad}
	\redEmbQuadFamily
\vcentcolon= \left\{
	(\embQuadCoord,
	\redQuadCoord)
	\text{ from \eqref{eq:mappings_quad}}
	\,\Big|\,
	\rT{\tname{\fA}{1}} \tname{\fA}{1} = \tname{\fI}{\dimMnfRed}
	\text{ and }
	\rT{\tname{\fA}{1}} \tname{\fA}{2} = \tname{\fzero}{\dimMnfRed \times \dimMnfRed(\dimMnfRed+1)/2}
\right\}.
\end{align}
\begin{proposition}
	The family $\redEmbQuadFamily$ is a subset of $\redEmbFamily$.
\end{proposition}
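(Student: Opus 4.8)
The plan is to verify the single defining condition for membership in $\redEmbFamily$, namely that $\redQuadCoord$ is a point reduction for $\embQuadCoord$ in the sense of the point projection property~\eqref{eq:projProperty:pointProjection}. Smoothness of both $\embQuadCoord$ and $\redQuadCoord$ is immediate, since both are polynomial (at most quadratic) maps between Euclidean vector spaces and hence $\calC^\infty$; so the only substantive requirement to check is $\redQuadCoord \circ \embQuadCoord = \id_{\R^{\dimMnfRed}}$.

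First I would insert the definitions~\eqref{eq:mappings_quad} and expand the composition. Because $\redQuadCoord$ subtracts the affine offset $\tname{\fA}{0}$ before applying $\rT{\tname{\fA}{1}}$, this offset cancels exactly against the $\tname{\fA}{0}$ term carried by $\embQuadCoord$, which leaves
\begin{align*}
	\evalFun[\pointRedCoord]{(\redQuadCoord \circ \embQuadCoord)}
	= \rT{\tname{\fA}{1}} \tname{\fA}{2} \quadKron{\pointRedCoord}
	+ \rT{\tname{\fA}{1}} \tname{\fA}{1} \pointRedCoord.
\end{align*}
The two constraints imposed in the definition~\eqref{eq:family_quad} of $\redEmbQuadFamily$ then close the argument directly: the condition $\rT{\tname{\fA}{1}} \tname{\fA}{2} = \tname{\fzero}{\dimMnfRed \times \dimMnfRed(\dimMnfRed+1)/2}$ annihilates the quadratic contribution, while $\rT{\tname{\fA}{1}} \tname{\fA}{1} = \tname{\fI}{\dimMnfRed}$ reduces the linear term to $\pointRedCoord$, whence $\redQuadCoord \circ \embQuadCoord = \id_{\R^{\dimMnfRed}}$.

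I do not anticipate any genuine obstacle: this is a direct algebraic verification that parallels the linear-subspace case treated in the preceding proposition for $\redEmbClassicalFamily$. The only new feature is that the biorthogonality-type normalization on $\tname{\fA}{1}$ is supplemented by the orthogonality condition $\rT{\tname{\fA}{1}} \tname{\fA}{2} = \fzero$, whose sole role is to eliminate the genuinely nonlinear term so that the affine-linear part still acts as a left inverse. Once the point projection property is established, \Cref{lem:emb_subspaces_from_point_red} moreover guarantees automatically that $\embQuadCoord$ is a smooth embedding and $\mnfSub$ an embedded submanifold, in accordance with the surrounding discussion.
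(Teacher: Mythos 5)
Your proposal is correct and follows essentially the same route as the paper: substitute the definitions from~\eqref{eq:mappings_quad}, observe that the affine offset $\tname{\fA}{0}$ cancels, and invoke the two constraints in~\eqref{eq:family_quad} to kill the quadratic term and normalize the linear one, yielding the point projection property~\eqref{eq:projProperty:pointProjection}. Your additional remarks on smoothness and on \Cref{lem:emb_subspaces_from_point_red} are consistent with the paper's surrounding discussion and add nothing that conflicts with its argument.
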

\begin{proof}
	The assumptions \eqref{eq:family_quad} on $\tname{\fA}{2}$ and $\tname{\fA}{1}$ imply the point projection property \eqref{eq:projProperty:pointProjection},
	\begin{align*}
		\evalFun[\pointRedCoord]{(\redQuadCoord \circ \embQuadCoord)}
	= \rT{\tname{\fA}{1}} \left(
		\tname{\fA}{2} \quadKron{\pointRedCoord}
		+ \tname{\fA}{1} \pointRedCoord
		+ \tname{\fA}{0}
		- \tname{\fA}{0}
	\right)
	\stackrel{\eqref{eq:family_quad}}{=} \pointRedCoord.\tag*{\qedhere}
	\end{align*}
\end{proof}

The matrices $\tname{\fA}{1}$ and $\tname{\fA}{2}$ are obtained in \cite{Barnett2022,Geelen2022,Benner2022,Issan2023,Sharma2023} from the \MSE functional \eqref{eq:mse_in_coord}.
In this setting, the assumptions in \eqref{eq:family_quad} allow to determine the matrices $\tname{\fA}{0}$, $\tname{\fA}{1}$ and $\tname{\fA}{2}$ sequentially:
First, $\tname{\fA}{0}$ is chosen, e.g., as the mean of $\solMnfTrain$.
Then,~\eqref{eq:mse_in_coord} is optimized for~$\tname{\fA}{1}$ (similarly to \eqref{eq:pod_functional}).
Finally,~\eqref{eq:mse_in_coord} is optimized for~$\tname{\fA}{2}$, which results in a (regularized) linear least squares problem.
The preceding papers use different tangent reductions to derive the \ROM,
which can be classified with the framework introduced in the present paper:
\cite{Barnett2022,Geelen2022,Issan2023} use the \MPG reduction map \eqref{eq:mpg_projection},
while \cite{Benner2022} relies on the \GMG reduction map \eqref{eq:gmg_reduction} (but neglects a few higher order terms).
The major difference between using \MPG or \GMG in that context is that the \MPG projects the \FOM vector field with the tangent reduction
$$
\redMapTanMPGCoord{{\evalFun[\pointRedCoord]{\embQuadCoord}}}
= \evalField[
	{{\evalFun[\pointRedCoord]{\embQuadCoord}}}
]{\D\redQuadCoord}
= \rT{\tname{\fA}{1}}
$$ which is constant,
while the \GMG uses the tangent reduction from \eqref{eq:gmg_projection_coord} with $\evalField[\pointCoord]{\tfCoord} = \fI_{\dimMnf}$ for all $\pointCoord \in \R^{\dimMnf}$
and $\evalField[\pointRedCoord]{\D\embCoord} = \tname{\fA}{2} \fB_2(\pointRedCoord) + \tname{\fA}{1}$ with a linear function $\fB_2\colon \R^{\dimMnfRed} \to \R^{\dimMnfRed(\dimMnfRed+1)/2 \times \dimMnfRed}$ describing the derivative of $\quadKron{(\cdot)}$, resulting in
\begin{align*}
	\redMapTanGMGCoord{
		{\evalFun[\pointRedCoord]{\embQuadCoord}}
	}
	&= \invb{
		\rT{\evalField[\pointRedCoord]{\D\embCoord}}\;
		\evalField[
			{\evalFun[\pointRedCoord]{\embQuadCoord}}
		]{\tfCoord}
		\evalField[\pointRedCoord]{\D\embCoord}
	}
	\rT{\evalField[\pointRedCoord]{\D\embCoord}}\;
	\evalField[
		{\evalFun[\pointRedCoord]{\embQuadCoord}}
	]{\tfCoord}\\
	&=
\invb{
	\fI_{\dimMnfRed}
	+ \rTb{\fB_2(\pointRedCoord)} \rT{\tname{\fA}{2}} \tname{\fA}{2} \fB_2(\pointRedCoord)
}
\rTb{
	\tname{\fA}{2} \fB_2(\pointRedCoord) + \tname{\fA}{1}
},
\end{align*}
which is typically nonlinear in $\pointRedCoord$, and, thus, so is the reduced vector field in general.

In \cite{Sharma2023}, structure-preserving \MOR of Hamiltonian systems on quadratic manifolds is investigated.
Two approaches are presented and compared:
(i) The \emph{blockwise quadratic} approach uses an embedding of a comparable structure as \eqref{eq:mappings_quad} in combination with the \MPG tangent reduction.
In contrast, (ii) the \emph{quadratic manifold cotangent lift}
uses the \SMG-ROM.
In order to construct a symplectomorphism from a quadratic embedding,
the so-called proper symplectic decomposition cotangent lift from \cite{PenM2016} (which generates a linear embedding $\embCoord$) is generalized to the case of nonlinear embeddings $\embCoord$ by introducing the so-called \emph{manifold cotangent lift}.
Based on this idea, the authors construct an embedding $\embCoord: \R^{2\dimMnfRed} \to \R^{2\dimMnf}$,
where the first $\dimMnf$ component functions are of the structure \eqref{eq:mappings_quad} and the last $\dimMnf$ component functions are rational functions.
The \SMG (as a special case of the \GMG) is then used for a structure-preserving tangent reduction of the Hamiltonian vector field.

\subsection{Nonlinear compressive approximation}
Following the idea of the previous subsection,
the embedding and the point reduction can be defined more generally with
\begin{align}\label{eq:mappings_NCA}
	&\evalFun[\pointRedCoord]{\embNCACoord}
\vcentcolon= \fA_2 \evalFun[\pointRedCoord]{\ff}
+ \fA_1 \pointRedCoord
+ \fA_0,&
	\evalFun[\pointCoord]{\redNCACoord}
\vcentcolon= \rT\fB \left( \pointCoord - \fA_0 \right),
\end{align}
where $\tname{\fA}{2} \in \R^{\dimMnf \times {\dimMnfIntermed}}$,
$\tname{\fA}{1} \in \R^{\dimMnf \times \dimMnfRed}$,
$\tname{\fA}{0} \in \R^{\dimMnf}$,
$\fB \in \R^{\dimMnf \times \dimMnfRed}$
are matrices,
and $\ff \in \calC^\infty (\R^{\dimMnfRed}, \R^{\dimMnfIntermed})$ is a smooth nonlinear mapping for a given $\dimMnfIntermed \in \N$.
Following \cite{Cohen2023},
we refer to this approach as \emph{nonlinear compressive approximation} (\NCA).
The respective family of functions is
\begin{align}\label{eq:family_NCA}
	\redEmbNCAFamily
\vcentcolon= \left\{
	(\embNCACoord,
	\redNCACoord)
	\text{ from \eqref{eq:mappings_NCA}}
	\,\Big|\,
	\rT\fB \fA_1 = \fI_{\dimMnfRed},\;
	\rT\fB \fA_2 = \tname{\fzero}{\dimMnfRed \times \dimMnfIntermed}
\right\}.
\end{align}

\begin{proposition}
	The family $\redEmbNCAFamily$ is a subset of $\redEmbFamily$.
\end{proposition}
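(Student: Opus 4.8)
The plan is to follow verbatim the template established by the two preceding propositions for $\redEmbClassicalFamily$ and $\redEmbQuadFamily$. Membership in $\redEmbFamily$ requires three things: that $\embNCACoord \in \calC^\infty(\mnfRed, \mnf)$, that $\redNCACoord \in \calC^\infty(\mnf, \mnfRed)$, and that $\redNCACoord$ is a point reduction for $\embNCACoord$ in the sense of the point projection property~\eqref{eq:projProperty:pointProjection}. Smoothness is immediate, and the only genuine content is the algebraic verification of~\eqref{eq:projProperty:pointProjection}; by \Cref{lem:emb_subspaces_from_point_red} this single identity then automatically upgrades the pair to a bona fide smooth embedding together with an associated point reduction, so nothing further need be checked.

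First I would record the smoothness: the map $\redNCACoord$ from~\eqref{eq:mappings_NCA} is affine and hence smooth, while $\embNCACoord$ is the sum of an affine part and the composition of the linear map $\fA_2$ with the smooth feature map $\ff \in \calC^\infty(\R^{\dimMnfRed}, \R^{\dimMnfIntermed})$, so $\embNCACoord \in \calC^\infty(\mnfRed, \mnf)$. The key step is then the direct computation of $\redNCACoord \circ \embNCACoord$. Substituting the definitions~\eqref{eq:mappings_NCA}, the affine shift $\fA_0$ cancels against the inner $-\fA_0$, and invoking the two constraints $\rT\fB\fA_1 = \fI_{\dimMnfRed}$ and $\rT\fB\fA_2 = \tname{\fzero}{\dimMnfRed \times \dimMnfIntermed}$ from~\eqref{eq:family_NCA} yields
\begin{align*}
	\evalFun[\pointRedCoord]{(\redNCACoord \circ \embNCACoord)}
	= \rT\fB \fA_2\, \evalFun[\pointRedCoord]{\ff} + \rT\fB \fA_1 \pointRedCoord
	= \pointRedCoord,
\end{align*}
which is exactly~\eqref{eq:projProperty:pointProjection}. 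The nonlinear term is annihilated by the orthogonality condition $\rT\fB\fA_2 = \tname{\fzero}{\dimMnfRed \times \dimMnfIntermed}$, precisely as the quadratic term was killed by $\rT{\tname{\fA}{1}} \tname{\fA}{2} = \tname{\fzero}{}$ in the proof for $\redEmbQuadFamily$.

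There is essentially no obstacle here: the computation is routine and mirrors the two earlier proofs one-to-one. The only point I would flag is the structural observation that these two constraints are exactly what decouples the nonlinear feature map $\ff$ from the identity on the domain — the point reduction can be taken to be the single affine map $\redNCACoord$ regardless of the choice of $\ff$ or of the intermediate dimension $\dimMnfIntermed$, which is what makes the \NCA ansatz compatible with our framework for arbitrary smooth nonlinearities.
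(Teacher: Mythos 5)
Your proof is correct and is essentially identical to the paper's: both verify the point projection property~\eqref{eq:projProperty:pointProjection} by direct substitution, cancelling $\fA_0$ and using $\rT\fB\fA_1 = \fI_{\dimMnfRed}$ and $\rT\fB\fA_2 = \tname{\fzero}{\dimMnfRed \times \dimMnfIntermed}$ to annihilate the remaining terms. The additional remarks on smoothness and the appeal to \Cref{lem:emb_subspaces_from_point_red} are consistent with how the paper frames the general setting in \Cref{subsec:snapshotGen}.
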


\begin{proof}
	The assumptions on $\tname{\fA}{1}$, $\tname{\fA}{2}$, and $\fB$ in \eqref{eq:family_NCA} imply the point projection property \eqref{eq:projProperty:pointProjection} with
	\begin{align*}
		\evalFun[{\pointRedCoord}]{(\redNCACoord \circ \embNCACoord)}
	= \rT\fB \left(
		\fA_2 \evalFun[\pointRedCoord]{\ff}
		+ \fA_1 \pointRedCoord
		+ \fA_0
		- \fA_0
	\right)
	= \pointRedCoord.\tag*{\qedhere}
	\end{align*}
\end{proof}

The \MSE for this approach may be optimized sequentially as in the \MOR on quadratic manifolds discussed in the previous section using $\fB = \fA_1$.
This method is, e.g., used in \cite{Barnett2023}, where $\ff$ is a neural network.

Multiple works investigate \NCA:
First, the quadratic embedding \eqref{eq:mappings_quad} discussed in the previous subsection is a special case of the \NCA when choosing $\evalFun[\pointRedCoord]{\ff} = \quadKron{\pointRedCoord}$.
Similarly, $\ff$ can be chosen as a higher-order polynomial in $\pointRedCoord$ to obtain a more general approximation.
Second, in \cite{Barnett2023}, $\ff$ is learned with an artificial neural network, while a time-discrete setting is considered for the reduction, which is not covered by our methods.
Third, \cite{Cohen2023} analyzes the approximation of a set of traveling wave solutions with (and without) varying support on the \PDE level using
decision trees and random forests in their numerical experiments. Interestingly, the authors show that a linear point reduction is enough to reproduce the set of traveling wave solutions.
Fourth, in \cite{BucGH23}, it is shown that the \NCA has its limitations in terms of the Kolmogorov $(\dimMnfIntermed + \dimMnfRed)$-width since the solution is contained in an~$(\dimMnfIntermed + \dimMnfRed)$-dimensional linear subspace of $\R^{\dimMnf}$.

\subsection{Autoencoders}
\label{subsec:autoencoders}
Autoencoders are a well-known technique in nonlinear dimension reduction (see, e.g., \cite[Cha.~14]{Goodfellow2016}).
In the understanding of the present work, \MOR with autoencoders chooses
\begin{align}\label{eq:mappings_ae}
	&\embAeCoord \in \calC^\infty(\R^{\dimMnfRed}, \R^{\dimMnf}),&
	&\redAeCoord \in \calC^\infty(\R^{\dimMnf}, \R^{\dimMnfRed}),
\end{align}
where both functions are artificial neural networks (\ANNs) with network parameters $\networkParam \in \R^{\numNetworkParam}$ (like weights and biases).
Since $\redAeCoord\colon \R^{\dimMnf} \to \R^{\dimMnfRed}$ and $\embAeCoord\colon \R^{\dimMnfRed} \to \R^{\dimMnf}$,
the concatenation $\embAeCoord \circ \redAeCoord $ maps from $\R^\dimMnf$ over $\R^{\dimMnfRed}$ back to $\R^\dimMnf$.
The in-between compression to $\R^{\dimMnfRed}$ is typically referred to as the \emph{bottleneck}, $\redAeCoord$ as the \emph{encoder}, $\embAeCoord$ as the \emph{decoder}, and the concatenation $\embAeCoord \circ \redAeCoord $ as an \emph{autoencoder}. The respective family is
\begin{align*}
	\redEmbAeFamily
\vcentcolon= \left\{
	(\embAeCoord, \redAeCoord) \text{ from \eqref{eq:mappings_ae}}
	\mid
	\networkParam \in \R^{\numNetworkParam} \text{ network parameters}
\right\}.
\end{align*}

Without special assumptions about the architecture of the \ANNs,
it is generally impossible to show the point projection property \eqref{eq:projProperty:pointProjection}.
However, whenever the minimum of the cost functional~\eqref{eq:red_emb_optim} is small, then we show in the following that the point projection property \eqref{eq:projProperty:pointProjection} holds approximately.
We assume to be given a norm $\pmetricRedNormCoord{\cdot}\colon \R^{\dimMnfRed} \to \Rpos$ such that $\embAeCoord$ and $\redAeCoord$ are \emph{Lipschitz continuous}, i.e., there exists a constant $\embLipschitzConst \geq 0$ such that for all points $\pointRedCoord, \pointRedAltCoord \in \R^{\dimMnfRed}$
\begin{align}
	\label{eq:embLipschitz}
	\pmetricNormCoord{	
		{\evalFun[\pointRedCoord]{{\embAeCoord}}}
	-
		{\evalFun[\pointRedAltCoord]{{\embAeCoord}}}
	}
	\leq&\; \embLipschitzConst
	\pmetricRedNormCoord{	
		\pointRedCoord
	-
		\pointRedAltCoord
	}
\end{align}
and a constant $\redLipschitzConst \geq 0$ such that for all points $\pointCoord, \pointAltCoord \in \R^{\dimMnf}$
\begin{align}
	\label{eq:redLipschitz}
	\pmetricRedNormCoord{	
		{\evalFun[\pointCoord]{{\redAeCoord}}}
	-
		{\evalFun[\pointAltCoord]{{\redAeCoord}}}
	}
	\leq&\; \redLipschitzConst
	\pmetricNormCoord{	
		\pointCoord
	-
		\pointAltCoord
	}.
\end{align}

\begin{theorem}\label{lem:autoencoder_inverseProperty}
	For a given tuple $(\embAeCoord, \redAeCoord) \in \redEmbAeFamily$ from the family of functions for \MOR with autoencoders with an \MSE value of $\evalFun[\embAeCoord, \redAeCoord]{\lossMSE} \geq 0$,
	the point projection property \eqref{eq:projProperty:pointProjection} is fulfilled approximately in the sense that for each $\pointRedCoord \in \R^{\dimMnfRed}$
	\begin{align*}
		\pmetricRedNormCoord{
			{\evalFun[\pointRedCoord]{{(\redAeCoord \circ \embAeCoord)}}}
			-
			\pointRedCoord
		}
		&\leq
		\redLipschitzConst
		\sqrt{
			\abs{\solMnfTrain}\evalFun[\embAeCoord, \redAeCoord]{\lossMSE}
		}\\
		&\phantom{\leq} + (\redLipschitzConst \embLipschitzConst + 1)
		\min_{\pointTrainAltCoord \in \solMnfTrain} \pmetricRedNormCoord{
			\pointRedCoord - \evalFun[\pointTrainAltCoord]{{\redAeCoord}}
		}
	\end{align*}
\end{theorem}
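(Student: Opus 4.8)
The plan is to derive the estimate from a single application of the triangle inequality in the reduced space, routed through a pivot built from an arbitrary snapshot. First I would fix $\pointRedCoord \in \R^{\dimMnfRed}$ and an arbitrary snapshot $\pointTrainAltCoord \in \solMnfTrain$, and introduce the \emph{encoded snapshot} $\pointRedAltCoord \vcentcolon= \evalFun[\pointTrainAltCoord]{\redAeCoord} \in \R^{\dimMnfRed}$ as the intermediate point. The decomposition to work from is
\begin{align*}
\evalFun[\pointRedCoord]{(\redAeCoord \circ \embAeCoord)} - \pointRedCoord
&= \Big( \evalFun[\pointRedCoord]{(\redAeCoord \circ \embAeCoord)} - \evalFun[\pointRedAltCoord]{(\redAeCoord \circ \embAeCoord)} \Big)
+ \Big( \evalFun[\pointRedAltCoord]{(\redAeCoord \circ \embAeCoord)} - \pointRedAltCoord \Big)
+ \big( \pointRedAltCoord - \pointRedCoord \big),
\end{align*}
to which I apply the triangle inequality for $\pmetricRedNormCoord{\cdot}$.

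For the first summand I would chain the two Lipschitz estimates \eqref{eq:redLipschitz} and \eqref{eq:embLipschitz}, obtaining the bound $\redLipschitzConst \embLipschitzConst \pmetricRedNormCoord{\pointRedCoord - \pointRedAltCoord}$. The third summand equals $\pmetricRedNormCoord{\pointRedCoord - \pointRedAltCoord}$ directly. The decisive middle summand is where the training data enters: using $\pointRedAltCoord = \evalFun[\pointTrainAltCoord]{\redAeCoord}$ and the Lipschitz property \eqref{eq:redLipschitz} of $\redAeCoord$ gives
\begin{align*}
\pmetricRedNormCoord{\evalFun[\pointRedAltCoord]{(\redAeCoord \circ \embAeCoord)} - \pointRedAltCoord}
= \pmetricRedNormCoord{\evalFun[{\evalFun[\pointTrainAltCoord]{(\embAeCoord \circ \redAeCoord)}}]{\redAeCoord} - \evalFun[\pointTrainAltCoord]{\redAeCoord}}
\leq \redLipschitzConst \pmetricNormCoord{\evalFun[\pointTrainAltCoord]{(\embAeCoord \circ \redAeCoord)} - \pointTrainAltCoord},
\end{align*}
and the remaining full-space reconstruction error of the snapshot $\pointTrainAltCoord$ is controlled by the pointwise \MSE bound \eqref{eq:msePointwise} (recalling that in coordinates $\metricMnfCoord{\pointCoord}{\pointAltCoord} = \pmetricNormCoord{\pointCoord - \pointAltCoord}$), which after taking square roots yields $\pmetricNormCoord{\evalFun[\pointTrainAltCoord]{(\embAeCoord \circ \redAeCoord)} - \pointTrainAltCoord} \leq \sqrt{\abs{\solMnfTrain} \evalFun[\embAeCoord, \redAeCoord]{\lossMSE}}$.

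Combining the three contributions gives, for the fixed but arbitrary snapshot,
\begin{align*}
\pmetricRedNormCoord{\evalFun[\pointRedCoord]{(\redAeCoord \circ \embAeCoord)} - \pointRedCoord}
\leq \redLipschitzConst \sqrt{\abs{\solMnfTrain} \evalFun[\embAeCoord, \redAeCoord]{\lossMSE}}
+ (\redLipschitzConst \embLipschitzConst + 1) \pmetricRedNormCoord{\pointRedCoord - \evalFun[\pointTrainAltCoord]{\redAeCoord}}.
\end{align*}
Since the left-hand side is independent of $\pointTrainAltCoord$, minimizing the right-hand side over all $\pointTrainAltCoord \in \solMnfTrain$ produces exactly the claimed bound. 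I do not anticipate a genuine obstacle: the only non-mechanical step is recognizing that the pivot must be the encoded snapshot $\evalFun[\pointTrainAltCoord]{\redAeCoord}$ rather than a generic reduced point, since the reconstruction-error estimate \eqref{eq:msePointwise} is available only on the training set; once this pivot is in place, the remainder is the triangle inequality together with the two Lipschitz hypotheses.
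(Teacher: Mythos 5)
Your proposal is correct and follows essentially the same route as the paper's proof: the paper likewise first bounds the defect at the encoded snapshots $\evalFun[\pointTrainAltCoord]{\redAeCoord}$ via the Lipschitz continuity of $\redAeCoord$ and the pointwise \MSE estimate, and then transfers this to arbitrary $\pointRedCoord$ using the Lipschitz constant $\redLipschitzConst\embLipschitzConst+1$ of $\redAeCoord\circ\embAeCoord-\id$ before minimizing over the training set. Your three-term decomposition is just the explicit form of the paper's abstract Lipschitz-function lemma, so the two arguments coincide.
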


Thus, for a bounded set $\reduce{M} \subsetneq \R^{\dimMnfRed}$, a fine sampling in~$\solMnfTrain$ of $\reduce{M}$ and small values of the \MSE functional such that the term $\abs{\solMnfTrain} \evalFun[\embAeCoord, \redAeCoord]{\lossMSE}$ is small, the point projection property \eqref{eq:projProperty:pointProjection} holds approximately on $\reduce{M}$, i.e., $\restrict{\redAeCoord \circ \embAeCoord}{\reduce{M}} \approx \id_{\reduce{M}}$.

\begin{proof}
	The proof is split in two parts.
	In the first part, we show that the inequality holds in the encoded training points $\pointRedAltCoordTrain \vcentcolon= \evalFun[\pointTrainAltCoord]{\redAeCoord}$ with $\pointTrainAltCoord \in \solMnfTrain$.
	Then, we show that the inequality holds for general $\pointRedCoord \in \R^{\dimMnfRed}$ by applying Lipschitz continuity.

	Consider a fixed but arbitrary training point $\pointTrainAltCoord \in \solMnfTrain$. Using \eqref{eq:redLipschitz} and \eqref{eq:msePointwise}, it holds
	\begin{align*}
		\pmetricRedNormCoord{
			{\evalFun[\pointRedAltCoordTrain]{{(\redAeCoord \circ \embAeCoord)}}}
			-
			\pointRedAltCoordTrain
		}
		&=
		\pmetricRedNormCoord{
			{\evalFun[\pointTrainAltCoord]{{(\redAeCoord \circ \embAeCoord \circ \redAeCoord)}}}
			-
			{\evalFun[\pointTrainAltCoord]{{\redAeCoord}}}
		}\\
		&\leq
		\redLipschitzConst
		\pmetricNormCoord{
			{\evalFun[\pointTrainAltCoord]{{(\embAeCoord \circ \redAeCoord)}}}
			-
			\pointTrainAltCoord
			}
		\\
		&\leq
		\redLipschitzConst
		\sqrt{
			\abs{\solMnfTrain}\evalFun[\embAeCoord, \redAeCoord]{\lossMSE}
		}.
	\end{align*}

	This property can be generalized to points $\pointRedCoord \in \R^{\dimMnfRed} \setminus \evalFun[\solMnfTrain]{\redAeCoord}$.
	The idea is that for a general Lipschitz continuous function $\ff: \R^{\dimMnfRed} \to \R^{\dimMnfRed}$ with Lipschitz constant $C \geq 0$ and $\pmetricRedNormCoord{\evalFun[\pointRedAltCoordTrain]{\ff}} \leq C_{B}$ for some $C_{B} \geq 0$,
	it holds for all $\pointRedCoord \in \R^{\dimMnfRed}$ by adding a zero, triangle inequality, and Lipschitz continuity
	\begin{align*}
		\pmetricRedNormCoord{\evalFun[\pointRedCoord]{\ff}}
\leq \pmetricRedNormCoord{\evalFun[\pointRedAltCoordTrain]{\ff}}
+ \pmetricRedNormCoord{
	\evalFun[\pointRedCoord]{\ff} - \evalFun[\pointRedAltCoordTrain]{\ff}
}
\leq C_{B}
+ C \pmetricRedNormCoord{
	\pointRedCoord - \pointRedAltCoordTrain
}
.
	\end{align*}
	For our case, we use $\evalFun[\pointRedCoord]{\ff} = \evalFun[\pointRedCoord]{(\redAeCoord \circ \embAeCoord)} - \pointRedCoord$
	with Lipschitz constant $C = \redLipschitzConst \embLipschitzConst + 1$,
	bound $C_{B}=\redLipschitzConst\sqrt{\abs{\solMnfTrain}\evalFun[\embAeCoord, \redAeCoord]{\lossMSE}}$,
	and points $\pointRedAltCoordTrain = \evalFun[\pointTrainAltCoord]{\redAeCoord}$. Thus, it holds
	\begin{align*}
		\pmetricRedNormCoord{
			{\evalFun[\pointRedCoord]{{(\redAeCoord \circ \embAeCoord)}}}
			-
			\pointRedCoord
		}
		\leq&\;
		\redLipschitzConst
		\sqrt{
			\abs{\solMnfTrain}\evalFun[\embAeCoord, \redAeCoord]{\lossMSE}
		}\\
		&+ (\redLipschitzConst \embLipschitzConst + 1)
		\pmetricRedNormCoord{
			\pointRedCoord - \evalFun[\pointTrainAltCoord]{{\redAeCoord}}
		}
	\end{align*}
	Taking the minimum over all $\pointTrainAltCoord \in \solMnfTrain$ on the right-hand side yields the result.
\end{proof}

\begin{remark}[Constrained autoencoders]
	In \cite{OttMR23}, the authors introduce a novel autoencoder architecture, which aims at fulfilling the point projection property \eqref{eq:projProperty:pointProjection} exactly.
	The architecture of the encoder is chosen to invert the decoder layer-wise based on the assumption that the linear layers of the decoder and encoder are pairwise biorthogonal.
	This biorthogonality is penalized in the loss functional with an additional term. 
\end{remark}

One of the first works to combine autoencoders with projection-based \MOR is \cite{Lee20}.
As discussed in \Cref{subsec:GMG}, the time-continuous formulation in that work considers the \GMG reduction for a state-independent Riemannian metric. 
A structure-preserving formulation for Hamiltonian systems in combination with autoencoders is discussed in \cite{BucGH21}.
As shown in \Cref{thm:special_case_smg},
this work is based on the \SMG reduction.

\section{Conclusions}
\label{sec:conclusions}

This work proposed a differential geometric framework for \MOR on manifolds in order to analyze two important efforts in \MOR jointly:
(i) The use of nonlinear projections and (ii) structure preservation.
The key ingredient for our framework is an embedding for a low-dimensional submanifold and a compatible reduction map.
The joint abstraction allowed us to derive shared theoretical properties, such as the exact reproduction result.
As two possible reduction mappings, we discussed
(a) the \emph{manifold Petrov--Galerkin} (\MPG) using the differential of the point reduction and
(b) the \emph{generalized manifold Galerkin} (\GMG), which is based on a nondegenerate tensor field.
Moreover, we showed that structure-preserving \MOR on manifolds for Lagrangian and Hamiltonian systems can be accomplished by choosing specific nondegenerate tensor fields in the \GMG reduction map, 
which we refer to as the \emph{Lagrangian manifold Galerkin} (\LMG) and \emph{symplectic manifold Galerkin} (\SMG).
In order to connect our framework to existing work in the field,
we demonstrated how different techniques for data-driven construction of the embedding and point reduction map are reflected in our approach.
We discussed four approximation types (linear, quadratic, nonlinear compressive, and autoencoders) and linked each of these types to multiple existing works in the field.

We believe that our framework can be extended in several regards:
First, other structure-preserving \MOR techniques might be formulated in the framework, such as Poisson systems \cite{HesP18},
port-Hamiltonian (descriptor) systems \cite{SchJ14,MehU23},
or (port-)metriplectic systems \cite{Morrison1986,Gruber2023,Hernandez2023}.
Thereby, all the described nonlinear projections (quadratic, nonlinear compressive, autoencoders) become available for such structured systems.
Second, as the high-dimensional differential equations we consider as \FOM are often obtained from the semi-discretization of systems of partial differential equations (\PDEs), the framework should be extended to the \PDE level. Such a formulation would forward the structures formulated on the \PDE level to the \ODE level.

\textbf{Acknowledgements} PB, BH, and BU are funded by Deutsche Forschungsgemeinschaft (DFG, German Research Foundation) under Germany's Excellence Strategy - EXC 2075 - 390740016 and acknowledge the support by the Stuttgart Center for Simulation Science (SimTech). PB and BH acknowledge the funding of DFG Project No.~314733389. BU acknowledges funding by the BMBF (grant no.~05M22VSA).

\bibliographystyle{plain-doi}
\bibliography{journalAbbr,literature}

\appendix
\section{Topological spaces and topological manifolds}
\subsection{Fundamentals}
\label{appx:topology}
Consider a set $\calM$.
A \emph{topology on $\calM$} is a collection $\calT$
of subsets of $\calM$ (which are called \emph{open subsets of $\calM$}) that satisfy that
(i) both the empty set $\emptyset$ and the set itself $\calM$ are open,
(ii) each union of open subsets is open,
(iii) each intersection of finitely many open subsets is open.
The pair $(\calM,\calT)$ is called a \emph{topological space}. If the specific topology is clear from the context or not particularly relevant for the discussion, then we simply write $\calM$ instead of $(\calM,\calT)$.

For two topological spaces $\calM$ and $\mnfAlt$, a map $F\colon \calM \to \mnfAlt$ is called \emph{continuous}, if for every open subset $V \subseteq \mnfAlt$, the \emph{preimage} $\left\{ \point \in \calM \mid \evalFun[\point]{F} \in V \right\}$ is open in $\calM$.
We call $F$ a \emph{homeomorphism}, if (i) it is bijective (and thus the \emph{inverse} $\inv{F}\colon \mnfAlt \to \calM$ exists) and (ii) both $F$ and $\inv{F}$ are continuous.
Correspondingly, two topological spaces $\calM$ and $\mnfAlt$ are called \emph{homeomorphic} if there exists a homeomorphism from $\calM$ to $\mnfAlt$.
Moreover, $\calM$ is called \emph{locally homeomorphic to} $\R^{\dimMnf}$ for $\dimMnf\in\N$ if for every point $\point \in \calM$ there exists an open set $\chartdomain \subseteq \calM$ with $\point \in \chartdomain$, which is homeomorphic to an open subset of $\R^{\dimMnf}$.

A topological space $\mnf$ is called a \emph{topological manifold} of \emph{dimension} $\dimMnf$ if it is locally homeomorphic to $\R^{\dimMnf}$ (and additionally Hausdorff and second-countable, see e.g.\ \cite[Cha.~1 and App.~A]{Lee12}).

\subsection{Proof of \texorpdfstring{\Cref{lem:emb_subspaces_from_point_red}}{Lemma 2.1}}
\label{appx:proof_emb_subspaces_from_point_red}
By assumption,
$\emb \in \calC^{\infty}(\mnfRed, \mnf)$
and
$\redMapPoint \in \calC^{\infty}(\mnf, \mnfRed)$ are smooth maps.
Then, the restrictions to $\mnfSub \subseteq \mnf$ are smooth maps in the subspace topology, i.e.,
$\emb \in \calC^{\infty}(\mnf, \mnfSub)$
and
$\restrict{\redMapPoint}{\mnfSub} \in \calC^{\infty}(\mnfSub, \mnfRed)$.
Thus, $\emb$ is a smooth diffeomorphism onto its image in the subspace topology.
By \cite[Prop.~4.8.\ (a)]{Lee12},
$\emb$ is a smooth immersion and thus a smooth embedding.

\section{Proofs for Lagrangian systems}
\subsection{Derivation of the reduced Euler-Lagrange equations}
\label{appx:proof_red_euler_lagrange}
The derivatives of $\LagRed$ can be computed for $\pointMnfQTanSpaceRed = \left( \pointQRed, \tidx{\velocityRed}{i}{} \basisMnfQRedTanAt[\pointQRed]{i} \right) \in \T\mnfQRed$ with $1 \leq j,k \leq \dimMnfQ$ and $1\leq i\leq \dimMnfQRed$ as
\begin{align*}
	\evalField[\pointMnfQTanSpaceRed]{
		\fracdiff{\LagRed}{\tidx{\chartmapTanSpaceMnfQRed}{i}{}}
	}
&=
\evalField[
	{\evalFun[
		\pointMnfQTanSpaceRed
	]{\embTanSpace}}
]{
	\fracdiff{\Lag}{\tidx{\chartmapTanSpaceMnfQ}{j}{}}
}
\evalField[\pointMnfQTanSpaceRed]{
	\fracdiff{\tidx{\embTanSpace}{j}{}}{\tidx{\chartmapTanSpaceMnfQRed}{i}{}}
}
+
\evalField[
	{\evalFun[
		\pointMnfQTanSpaceRed
	]{\embTanSpace}}
]{
	\fracdiff{\Lag}{\tidx{\chartmapTanSpaceMnfQ}{\dimMnfQ + j}{}}
}
\evalField[\pointMnfQTanSpaceRed]{
	\fracdiff{\tidx{\embTanSpace}{\dimMnfQ + j}{}}{\tidx{\chartmapTanSpaceMnfQRed}{i}{}}
}\\
	&=
\evalField[
	{\evalFun[
		\pointMnfQTanSpaceRed
	]{\embTanSpace}}
]{
	\fracdiff{\Lag}{\tidx{\chartmapTanSpaceMnfQ}{j}{}}
}
\evalField[{\pointQRed}]{
	\fracdiff{\tidx{\embQ}{j}{}}
		{\tidx{\chartmapMnfQRed}{i}{}}
}
+
\evalField[
	{\evalFun[
		\pointMnfQTanSpaceRed
	]{\embTanSpace}}
]{
	\fracdiff{\Lag}{\tidx{\chartmapTanSpaceMnfQ}{\dimMnfQ + j}{}}
}
\evalField[{\pointQRed}]{
	\sfracdiff{\tidx{\embQ}{j}{}}
		{\tidx{\chartmapMnfQRed}{k}{}}
		{\tidx{\chartmapMnfQRed}{i}{}\,}
} {\tidx{\velocityRed}{k}{}}\\
	\evalField[\pointMnfQTanSpaceRed]{
		\fracdiff{\LagRed}{\tidx{\chartmapTanSpaceMnfQRed}{\dimMnfQRed +i}{}}
	}
&= 
\evalField[
	{\evalFun[
		\pointMnfQTanSpaceRed
	]{\embTanSpace}}
]{
	\fracdiff{\Lag}{\tidx{\chartmapTanSpaceMnfQ}{j}{}}
}
\evalField[\pointMnfQTanSpaceRed]{
	\fracdiff{\tidx{\embTanSpace}{j}{}}{\tidx{\chartmapTanSpaceMnfQRed}{\dimMnfQRed + i}{}}
}
+
\evalField[
	{\evalFun[
		\pointMnfQTanSpaceRed
	]{\embTanSpace}}
]{
	\fracdiff{\Lag}{\tidx{\chartmapTanSpaceMnfQ}{\dimMnfQ + j}{}}
}
\evalField[\pointMnfQTanSpaceRed]{
	\fracdiff{\tidx{\embTanSpace}{\dimMnfQ + j}{}}{\tidx{\chartmapTanSpaceMnfQRed}{\dimMnfQRed + i}{}}
}\\
	&=
\evalField[
	{\evalFun[
		\pointMnfQTanSpaceRed
	]{\embTanSpace}}
]{
	\fracdiff{\Lag}{\tidx{\chartmapTanSpaceMnfQ}{\dimMnfQ + j}{}}
}
\evalField[{\pointQRed}]{
	\fracdiff{\tidx{\embQ}{j}{}}
		{\tidx{\chartmapMnfQRed}{i}{}}
}.
\end{align*}
Evaluation for the lifted curve $\lift{\curveMnfQRed} \in \calC^{\infty}(\mnfTime, \T\mnfQRed)$
and derivation with respect to the time, yields for $1 \leq j, k \leq \dimMnfQ$ and $1 \leq i, \ell, p \leq \dimMnfQRed$ 
\begin{align*}
	\evalField[\pointTime]{\ddt\left(
	\evalField[\lift{\evalFun[\cdot]{\curveMnfQRed}}]{
		\fracdiff{\LagRed}{\tidx{\chartmapTanSpaceMnfQRed}{\dimMnfQRed+i}{}}
	}
	\right)}
=&\;
\evalField[\pointTime]{
\ddt\left(
	\evalField[
		{\evalFun[
			{\lift{\evalFun[\cdot]{\curveMnfQRed}}}
		]{\embTanSpace}}
	]{
		\fracdiff{\Lag}{\tidx{\chartmapTanSpaceMnfQ}{\dimMnfQ + j}{}}
	}
	\evalField[{\evalFun[\cdot]{\curveMnfQRed}}]{
		\fracdiff{\tidx{\embQ}{j}{}}
			{\tidx{\chartmapMnfQRed}{i}{}}
	}
\right)
}\\
	=&\;
\evalField[
	{\evalFun[
		{\lift{\evalFun[\pointTime]{\curveMnfQRed}}}
	]{\embTanSpace}}
]{
	\sfracdiff{\Lag}
		{\tidx{\chartmapTanSpaceMnfQ}{\dimMnfQ + j}{}}
		{\tidx{\chartmapTanSpaceMnfQ}{k}{}\,}
}
\evalField[{\evalFun[\pointTime]{\curveMnfQRed}}]{
	\fracdiff{\tidx{\embQ}{j}{}}
		{\tidx{\chartmapMnfQRed}{i}{}}
}
\evalField[{
	{\lift{\evalFun[\pointTime]{\curveMnfQRed}}}
}]{
	\fracdiff{\tidx{\embTanSpace}{k}{}}
		{\tidx{\chartmapTanSpaceMnfQRed}{\ell}{}}
}
\evalField[\pointTime]{\ddt\left(
	\tidx{\lift{\evalFun[\cdot]{\curveMnfQRed}}}{\ell}{}
\right)}\\
	&+
	\evalField[
		{\evalFun[
			{\lift{\evalFun[\pointTime]{\curveMnfQRed}}}
		]{\embTanSpace}}
	]{
		\sfracdiff{\Lag}
			{\tidx{\chartmapTanSpaceMnfQ}{\dimMnfQ + j}{}}
			{\tidx{\chartmapTanSpaceMnfQ}{k}{}\,}
	}
	\evalField[{\evalFun[\pointTime]{\curveMnfQRed}}]{
		\fracdiff{\tidx{\embQ}{j}{}}
			{\tidx{\chartmapMnfQRed}{i}{}}
	}
	\evalField[{
		{\lift{\evalFun[\pointTime]{\curveMnfQRed}}}
	}]{
		\fracdiff{\tidx{\embTanSpace}{k}{}}
			{\tidx{\chartmapTanSpaceMnfQRed}{\dimMnfQRed + \ell}{}}
	}
	\evalField[\pointTime]{
		\ddt\left(
			\tidx{\lift{\evalFun[\cdot]{\curveMnfQRed}}}{\dimMnfQRed + \ell}{}
		\right)
	}\\
		&+
\evalField[
	{\evalFun[
		{\lift{\evalFun[\pointTime]{\curveMnfQRed}}}
	]{\embTanSpace}}
]{
	\sfracdiff{\Lag}
		{\tidx{\chartmapTanSpaceMnfQ}{\dimMnfQ + j}{}}
		{\tidx{\chartmapTanSpaceMnfQ}{\dimMnfQ + k}{}\,}
}
\evalField[{\evalFun[\pointTime]{\curveMnfQRed}}]{
	\fracdiff{\tidx{\embQ}{j}{}}
		{\tidx{\chartmapMnfQRed}{i}{}}
}
\evalField[{
	{\lift{\evalFun[\pointTime]{\curveMnfQRed}}}
}]{
	\fracdiff{\tidx{\embTanSpace}{\dimMnfQ + k}{}}
		{\tidx{\chartmapTanSpaceMnfQRed}{\ell}{}}
}
\evalField[\pointTime]{
	\ddt\left(
		\tidx{\lift{\evalFun[\cdot]{\curveMnfQRed}}}{\ell}{}
	\right)
}\\
	&+
	\evalField[
		{\evalFun[
			{\lift{\evalFun[\pointTime]{\curveMnfQRed}}}
		]{\embTanSpace}}
	]{
		\sfracdiff{\Lag}
			{\tidx{\chartmapTanSpaceMnfQ}{\dimMnfQ + j}{}}
			{\tidx{\chartmapTanSpaceMnfQ}{\dimMnfQ + k}{}\,}
	}
	\evalField[{\evalFun[\pointTime]{\curveMnfQRed}}]{
		\fracdiff{\tidx{\embQ}{j}{}}
			{\tidx{\chartmapMnfQRed}{i}{}}
	}
	\evalField[{
		{\lift{\evalFun[\pointTime]{\curveMnfQRed}}}
	}]{
		\fracdiff{\tidx{\embTanSpace}{\dimMnfQ + k}{}}
			{\tidx{\chartmapTanSpaceMnfQRed}{\dimMnfQRed + \ell}{}}
	}
	\evalField[\pointTime]{
		\ddt\left(
			\tidx{\lift{\evalFun[\cdot]{\curveMnfQRed}}}{\dimMnfQRed + \ell}{}
		\right)
	}\\
		&+
\evalField[
	{\evalFun[
		{\lift{\evalFun[\pointTime]{\curveMnfQRed}}}
	]{\embTanSpace}}
]{
	\fracdiff{\Lag}{\tidx{\chartmapTanSpaceMnfQ}{\dimMnfQ + j}{}}
}
\evalField[{\evalFun[\pointTime]{\curveMnfQRed}}]{
	\sfracdiff{\tidx{\embQ}{j}{}}
		{\tidx{\chartmapMnfQRed}{i}{}}
		{\tidx{\chartmapMnfQRed}{k}{}\,}
}
\evalField[\pointTime]{
\ddt\left(
	\tidx{\lift{\evalFun[\cdot]{\curveMnfQRed}}}{k}{}
\right)
}\\
	=&\;
\evalField[
	{\evalFun[
		{\lift{\evalFun[\pointTime]{\curveMnfQRed}}}
	]{\embTanSpace}}
]{
	\sfracdiff{\Lag}
		{\tidx{\chartmapTanSpaceMnfQ}{\dimMnfQ + j}{}}
		{\tidx{\chartmapTanSpaceMnfQ}{k}{}\,}
}
\evalField[{\evalFun[\pointTime]{\curveMnfQRed}}]{
	\fracdiff{\tidx{\embQ}{j}{}}
		{\tidx{\chartmapMnfQRed}{i}{}}
}
\evalField[{\evalFun[\pointTime]{\curveMnfQRed}}]{
	\fracdiff{\tidx{\embQ}{k}{}}
		{\tidx{\chartmapMnfQRed}{\ell}{}}
}
\evalField[\pointTime]{\ddt\tidx{\curveMnfQRed}{\ell}{}}
\\
	&+
\evalField[
	{\evalFun[
		{\lift{\evalFun[\pointTime]{\curveMnfQRed}}}
	]{\embTanSpace}}
]{
	\sfracdiff{\Lag}
		{\tidx{\chartmapTanSpaceMnfQ}{\dimMnfQ + j}{}}
		{\tidx{\chartmapTanSpaceMnfQ}{\dimMnfQ + k}{}\,}
}
\evalField[{\evalFun[\pointTime]{\curveMnfQRed}}]{
	\fracdiff{\tidx{\embQ}{j}{}}
		{\tidx{\chartmapMnfQRed}{i}{}}
}
\evalField[{\pointQRed}]{
	\sfracdiff{\tidx{\embQ}{k}{}}
		{\tidx{\chartmapMnfQRed}{p}{}}
		{\tidx{\chartmapMnfQRed}{\ell}{}\,}
}
\evalField[\pointTime]{\ddt\tidx{\curveMnfQRed}{p}{}}
\evalField[\pointTime]{\ddt\tidx{\curveMnfQRed}{\ell}{}}\\
	&+
\evalField[
	{\evalFun[
		{\lift{\evalFun[\pointTime]{\curveMnfQRed}}}
	]{\embTanSpace}}
]{
	\sfracdiff{\Lag}
		{\tidx{\chartmapTanSpaceMnfQ}{\dimMnfQ + j}{}}
		{\tidx{\chartmapTanSpaceMnfQ}{\dimMnfQ + k}{}\,}
}
\evalField[{\evalFun[\pointTime]{\curveMnfQRed}}]{
	\fracdiff{\tidx{\embQ}{j}{}}
		{\tidx{\chartmapMnfQRed}{i}{}}
}
\evalField[{\evalFun[\pointTime]{\curveMnfQRed}}]{
	\fracdiff{\tidx{\embQ}{k}{}}
		{\tidx{\chartmapMnfQRed}{\ell}{}}
}
\evalField[\pointTime]{\sddt\tidx{\curveMnfQRed}{\ell}{}}\\
	&+
\evalField[
	{\evalFun[
		{\lift{\evalFun[\pointTime]{\curveMnfQRed}}}
	]{\embTanSpace}}
]{
	\fracdiff{\Lag}{\tidx{\chartmapTanSpaceMnfQ}{\dimMnfQ + j}{}}
}
\evalField[{\evalFun[\pointTime]{\curveMnfQRed}}]{
	\sfracdiff{\tidx{\embQ}{j}{}}
		{\tidx{\chartmapMnfQRed}{i}{}}
		{\tidx{\chartmapMnfQRed}{\ell}{}\,}
}
\evalField[\pointTime]{\ddt\tidx{\curveMnfQRed}{\ell}{}}
\end{align*}
In total, it holds for $1 \leq j, k \leq \dimMnfQ$ and $1 \leq i, \ell, p \leq \dimMnfQRed$ in
\begin{align*}
	0 &= \evalField[\lift{\evalFun[\pointTime]{\curveMnfQRed}}]{
	\fracdiff{\LagRed}{\tidx{\chartmapTanSpaceMnfQRed}{i}{}}
}
-
\evalField[\pointTime]{\ddt\left(
	\evalField[\lift{\evalFun[\cdot]{\curveMnfQRed}}]{
		\fracdiff{\LagRed}{\tidx{\chartmapTanSpaceMnfQRed}{\dimMnfQRed+i}{}}
	}
\right)
}\\
	&=
\evalField[{\evalFun[\pointTime]{\curveMnfQRed}}]{
	\fracdiff{\tidx{\embQ}{j}{}}
		{\tidx{\chartmapMnfQRed}{i}{}}
}
\Bigg(
\evalField[
	{\evalFun[
		{\lift{\evalFun[\pointTime]{\curveMnfQRed}}}
	]{\embTanSpace}}
]{
	\fracdiff{\Lag}{\tidx{\chartmapTanSpaceMnfQ}{j}{}}
}
-
\evalField[
	{\evalFun[
		{\lift{\evalFun[\pointTime]{\curveMnfQRed}}}
	]{\embTanSpace}}
]{
	\sfracdiff{\Lag}
		{\tidx{\chartmapTanSpaceMnfQ}{\dimMnfQ + j}{}}
		{\tidx{\chartmapTanSpaceMnfQ}{k}{}\,}
}
\evalField[{\evalFun[\pointTime]{\curveMnfQRed}}]{
	\fracdiff{\tidx{\embQ}{k}{}}
		{\tidx{\chartmapMnfQRed}{\ell}{}}
}
\evalField[\pointTime]{\ddt\tidx{\curveMnfQRed}{\ell}{}}\\
	&\phantom{\evalField[{\evalFun[\pointTime]{\curveMnfQRed}}]{
		\fracdiff{\tidx{\embQ}{j}{}}
			{\tidx{\chartmapMnfQRed}{i}{}}
	}
	\Bigg(}
-
\evalField[
	{\evalFun[
		{\lift{\evalFun[\pointTime]{\curveMnfQRed}}}
	]{\embTanSpace}}
]{
	\sfracdiff{\Lag}
		{\tidx{\chartmapTanSpaceMnfQ}{\dimMnfQ + j}{}}
		{\tidx{\chartmapTanSpaceMnfQ}{\dimMnfQ + k}{}\,}
}
\evalField[{\evalField[\pointTime]{{\curveMnfQRed}}}]{
	\sfracdiff{\tidx{\embQ}{k}{}}
		{\tidx{\chartmapMnfQRed}{p}{}}
		{\tidx{\chartmapMnfQRed}{\ell}{}\,}
}
\evalField[\pointTime]{\ddt\tidx{\curveMnfQRed}{p}{}}
\evalField[\pointTime]{\ddt\tidx{\curveMnfQRed}{\ell}{}}\\
&\phantom{\evalField[{\evalFun[\pointTime]{\curveMnfQRed}}]{
	\fracdiff{\tidx{\embQ}{j}{}}
		{\tidx{\chartmapMnfQRed}{i}{}}
}
\Bigg(}
-
\evalField[
	{\evalFun[
		{\lift{\evalFun[\pointTime]{\curveMnfQRed}}}
	]{\embQ}}
]{
	\sfracdiff{\Lag}
		{\tidx{\chartmapTanSpaceMnfQ}{\dimMnfQ + j}{}}
		{\tidx{\chartmapTanSpaceMnfQ}{\dimMnfQ + k}{}\,}
}
\evalField[{\evalFun[\pointTime]{\curveMnfQRed}}]{
	\fracdiff{\tidx{\embQ}{k}{}}
		{\tidx{\chartmapMnfQRed}{\ell}{}}
}
\evalField[\pointTime]{\sddt\tidx{\curveMnfQRed}{\ell}{}}
\Bigg).
\end{align*}

\subsection{Proof of \texorpdfstring{\Cref{thm:equivalence_solve_lmg}}{Theorem~4.19}}
\label{appx:proof_equivalence_solve_lmg}
	In order to show that the systems are equivalent, we show that the underlying vector fields are identical, i.e., we show that the \LMG reduction~\eqref{eqn:lmg_reduction} of the Euler--Lagrange vector field~\eqref{eq:def_vfLag} results in the reduced Euler--Lagrangian vector field~\eqref{eq:red_vf_euler_lagrange}. To simplify the notation, we use $\metric = \metricLMG$ and $\metricRed = \metricLMGRed$ in the following, with $\metricLMG$ as in~\eqref{eqn:LMGtensorField}.
	Let $\pointMnfQTanSpaceRed = (\pointQRed, \velocityRed) = \big(\pointQRed, \tidx{\velocityRed}{i}{} \basisMnfQRedTanAt{i}\big) \in \T\mnfQRed$.
	The reduced tensor field $\metricRed
	= \pullback{\diff{\embTanSpace}} \metric$ reads for $1 \leq \alpha, \beta \leq 2\dimMnfQ$
	\begin{align*}
		\evalField[\pointMnfQTanSpaceRed]{\metricRed}
		&=
\evalField[\pointMnfQTanSpaceRed]{
	\fracdiff{\tidx{\embTanSpace}{\gamma}{}}{\tidx{\chartmapTanSpaceMnfQ}{\alpha}{}}
}
\tidx{\left(\evalField[
	{\evalFun[
		\pointMnfQTanSpaceRed
	]{\embTanSpace}}
]{\metric}\right)}{}{\gamma\delta}\;
\evalField[\pointMnfQTanSpaceRed]{
	\fracdiff{\tidx{\embTanSpace}{\delta}{}}{\tidx{\chartmapTanSpaceMnfQ}{\beta}{}}
}
\evalField[\pointMnfQTanSpaceRed]{
	\basisTMnfQCoTan{\alpha}
}
\otimes
\evalField[\pointMnfQTanSpaceRed]{
	\basisTMnfQCoTan{\beta}
}\\
&=
\tidx{\left(\evalField[
	{\evalFun[
		\pointMnfQTanSpaceRed
	]{\embTanSpace}}
]{\metricQ}\right)}{}{k\ell}
\left(
	\evalField[\pointMnfQTanSpaceRed]{
		\sfracdiff{\tidx{\embQ}{k}{}}
			{\tidx{\chartmapMnfQRed}{i}{}}
			{\tidx{\chartmapMnfQRed}{p}{}\,}
	}
	\tidx{\velocityRed}{p}{}\,
	\evalField[\pointMnfQTanSpaceRed]{
		\basisTMnfQRedCoTanPos{i}
	}
	+
	\evalField[\pointMnfQTanSpaceRed]{
		\fracdiff{\tidx{\embQ}{k}{}}
			{\tidx{\chartmapMnfQRed}{i}{}}
	}
	\evalField[\pointMnfQTanSpaceRed]{
		\basisTMnfQRedCoTanVel{i}
	}
\right)
\otimes
\left(
	\evalField[\pointMnfQTanSpaceRed]{
		\fracdiff{\tidx{\embQ}{\ell}{}}
			{\tidx{\chartmapMnfQRed}{j}{}}
	}
	\evalField[\pointMnfQTanSpaceRed]{
		\basisTMnfQRedCoTanPos{j}
	}
\right)\\
&\phantom{=} +
\tidx{\left(\evalField[
	{\evalFun[
		\pointMnfQTanSpaceRed
	]{\embTanSpace}}
]{\metricV}\right)}{}{k\ell}
\left(
	\evalField[\pointMnfQTanSpaceRed]{
		\fracdiff{\tidx{\embQ}{k}{}}
			{\tidx{\chartmapMnfQRed}{i}{}}
	}
	\evalField[\pointMnfQTanSpaceRed]{
		\basisTMnfQRedCoTanPos{i}
	}
\right)
\otimes
\left(
	\evalField[\pointMnfQTanSpaceRed]{
		\sfracdiff{\tidx{\embQ}{\ell}{}}
			{\tidx{\chartmapMnfQRed}{j}{}}
			{\tidx{\chartmapMnfQRed}{p}{}\,}
	}
	\tidx{\velocityRed}{p}{}\,
	\evalField[\pointMnfQTanSpaceRed]{
		\basisTMnfQRedCoTanPos{j}
	}
	+
	\evalField[\pointMnfQTanSpaceRed]{
		\fracdiff{\tidx{\embQ}{\ell}{}}
			{\tidx{\chartmapMnfQRed}{j}{}}
	}
	\evalField[\pointMnfQTanSpaceRed]{
		\basisTMnfQRedCoTanVel{j}
	}
\right)\\
&=
\tidx{\left(\evalField[\pointMnfQTanSpaceRed]{
	\metricQRed}\right)}{}{ij}
\evalField[\pointMnfQTanSpaceRed]{
	\basisTMnfQRedCoTanVel{i}
}
\otimes
\evalField[\pointMnfQTanSpaceRed]{
	\basisTMnfQRedCoTanPos{j}
}
+
\tidx{\left(\evalField[\pointMnfQTanSpaceRed]{
	\metricQVRed}\right)}{}{ij}
\evalField[\pointMnfQTanSpaceRed]{
	\basisTMnfQRedCoTanPos{i}
}
\otimes
\evalField[\pointMnfQTanSpaceRed]{
	\basisTMnfQRedCoTanPos{j}
}\\
&\phantom{=} +
\tidx{\left(\evalField[\pointMnfQTanSpaceRed]{
	\metricVRed}\right)}{}{ij}
\evalField[\pointMnfQTanSpaceRed]{
	\basisTMnfQRedCoTanPos{i}
}
\otimes
\evalField[\pointMnfQTanSpaceRed]{
	\basisTMnfQRedCoTanVel{j}
}
	\end{align*}
	with the abbreviations
	\begin{align*}
		\tidx{\left(\evalField[\pointMnfQTanSpaceRed]{
			\metricQRed}\right)}{}{ij}
&\vcentcolon=
\evalField[\pointMnfQTanSpaceRed]{
	\fracdiff{\tidx{\embQ}{k}{}}
		{\tidx{\chartmapMnfQRed}{i}{}}
}
\tidx{\left(\evalField[
	{\evalFun[
		\pointMnfQTanSpaceRed
	]{\embTanSpace}}
]{
	\metricQ}\right)}{}{k\ell}
\evalField[\pointMnfQTanSpaceRed]{
	\fracdiff{\tidx{\embQ}{\ell}{}}
		{\tidx{\chartmapMnfQRed}{j}{}}
},\\
		\tidx{\left(\evalField[\pointMnfQTanSpaceRed]{
			\metricVRed}\right)}{}{ij}
&\vcentcolon=
\evalField[\pointMnfQTanSpaceRed]{
	\fracdiff{\tidx{\embQ}{k}{}}
		{\tidx{\chartmapMnfQRed}{i}{}}
}
\tidx{\left(\evalField[
	{\evalFun[
		\pointMnfQTanSpaceRed
	]{\embTanSpace}}
]{
	\metricV}\right)}{}{k\ell}
\evalField[\pointMnfQTanSpaceRed]{
	\fracdiff{\tidx{\embQ}{\ell}{}}
		{\tidx{\chartmapMnfQRed}{j}{}}
},\\
		\tidx{\left(\evalField[\pointMnfQTanSpaceRed]{
			\metricQVRed}\right)}{}{ij}
&\vcentcolon=
\evalField[\pointMnfQTanSpaceRed]{
	\sfracdiff{\tidx{\embQ}{k}{}}
		{\tidx{\chartmapMnfQRed}{i}{}}
		{\tidx{\chartmapMnfQRed}{p}{}\,}
}
	\tidx{\velocityRed}{p}{}\,
\tidx{\left(\evalField[
	{\evalFun[
		\pointMnfQTanSpaceRed
	]{\embTanSpace}}
]{
	\metricQ}\right)}{}{k\ell}
\evalField[\pointMnfQTanSpaceRed]{
	\fracdiff{\tidx{\embQ}{\ell}{}}
		{\tidx{\chartmapMnfQRed}{j}{}}
}
+
\evalField[\pointMnfQTanSpaceRed]{
	\fracdiff{\tidx{\embQ}{k}{}}
		{\tidx{\chartmapMnfQRed}{i}{}}
}
\tidx{\left(\evalField[
	{\evalFun[
		\pointMnfQTanSpaceRed
	]{\embTanSpace}}
]{
	\metricV}\right)}{}{k\ell}\,
\evalField[\pointMnfQTanSpaceRed]{
	\sfracdiff{\tidx{\embQ}{\ell}{}}
		{\tidx{\chartmapMnfQRed}{j}{}}
		{\tidx{\chartmapMnfQRed}{p}{}\,}
}
	\tidx{\velocityRed}{p}{}
	\end{align*}
	for $1 \leq k,\ell \leq \dimMnfQ$ and $1 \leq i,j,p \leq \dimMnfQRed$.
	It is easy to verify that the inverse of $\metricRed$ is given by
	\begin{equation}\label{eq:proof_lagrange_inv_metricRed}
	\begin{aligned}
		\inv{\evalField[\pointMnfQTanSpaceRed]{\metricRed}} &=
\tidx{(\evalField[\pointMnfQTanSpaceRed]{ \metricQRed })}{ij}{}
\basisTMnfQRedTanPosAt{i}
\otimes
\basisTMnfQRedTanVelAt{j}\\
	&\phantom{=}-
\tidx{(\evalField[\pointMnfQTanSpaceRed]{ \metricVRed })}{ik}{}\,
\tidx{(\evalField[\pointMnfQTanSpaceRed]{ \metricQVRed })}{}{k\ell}\,
\tidx{(\evalField[\pointMnfQTanSpaceRed]{ \metricQRed })}{\ell j}{}
\basisTMnfQRedTanVelAt{i}
\otimes
\basisTMnfQRedTanVelAt{j}\\
	&\phantom{=}+
\tidx{(\evalField[\pointMnfQTanSpaceRed]{ \metricVRed })}{ij}{}
\basisTMnfQRedTanVelAt{i}
\otimes
\basisTMnfQRedTanPosAt{j}
	\end{aligned}
	\end{equation}
	with $1 \leq i,j,k,\ell \leq \dimMnfQRed$, which in bold notation reads
	\begin{align*}
		&\evalField[\pointMnfQTanSpaceRedCoord]{\metricRedCoord}
= \begin{pmatrix}
	\evalField[\pointMnfQTanSpaceRedCoord]{\metricQVRedCoord} &
	\evalField[\pointMnfQTanSpaceRedCoord]{\metricVRedCoord}\\
	\evalField[\pointMnfQTanSpaceRedCoord]{\metricQRedCoord}  & \fzero
\end{pmatrix}&
		&\text{and}&
		&\inv{\evalField[\pointMnfQTanSpaceRedCoord]{\metricRedCoord}}
= \begin{pmatrix}
	\fzero & \inv{\evalField[\pointMnfQTanSpaceRedCoord]{\metricQRedCoord}}\\
	\inv{\evalField[\pointMnfQTanSpaceRedCoord]{\metricVRedCoord}} &
	-\inv{\evalField[\pointMnfQTanSpaceRedCoord]{\metricVRedCoord}}
		\evalField[\pointMnfQTanSpaceRedCoord]{\metricQVRedCoord}
		\inv{\evalField[\pointMnfQTanSpaceRedCoord]{\metricQRedCoord}}
\end{pmatrix}.
	\end{align*}
	Moreover, we obtain for the indices $1 \leq \beta, \gamma \leq 2\dimMnfQ$ and $1 \leq \alpha \leq 2\dimMnfQRed$ and $1 \leq j,k \leq \dimMnfQ$ and $1 \leq \ell,p,r \leq \dimMnfQRed$
	\begin{equation}\label{eq:proof_lagrange_term_without_inv_metric}
	\begin{aligned}
		\evalField[\pointMnfQTanSpaceRed]{
			\isoFlatRed{\lmgReduction (\vfLag)}
		}
&= \tidx{\left(
	\evalField[{{{\pointMnfQTanSpaceRed}}}]{\dualdiff{\embTanSpace}} \left(
		\isoFlat{\evalField[{{
			\evalFun[{
				\pointMnfQTanSpaceRed
			}]{\embTanSpace}
		}}]{\vfLag}}
	\right)
\right)}{}{\alpha}
\evalField[\pointMnfQTanSpaceRed]{
	\basisTMnfQRedCoTan{\alpha}
}\\
&=
\evalField[\pointMnfQTanSpaceRed]{
	\fracdiff{\tidx{\embTanSpace}{\beta}{}}
		{\tidx{\chartmapTanSpaceMnfQRed}{\alpha}{}}
}
\tidx{\left(\evalField[
	{\evalFun[
		\pointMnfQTanSpaceRed
	]{\embTanSpace}}
]{
	\metric}\right)}{}{\beta\gamma}
\tidx{\left(\evalField[
	{\evalFun[\pointMnfQTanSpaceRed]{\embTanSpace}}
]{
	\vfLag}\right)}{\gamma}{}
\evalField[\pointMnfQTanSpaceRed]{
	\basisTMnfQRedCoTan{\alpha}
}
\\
		&=
\bigg(
\evalField[\pointMnfQTanSpaceRed]{
	\fracdiff{\tidx{\embTanSpace}{j}{}}{\tidx{\chartmapTanSpaceMnfQRed}{\alpha}{}}
}
\tidx{\left(\evalField[
	{\evalFun[
		\pointMnfQTanSpaceRed
	]{\embTanSpace}}
]{
	\metricV}\right)}{}{jk}
\tidx{\left(\evalField[{
	{\evalFun[\pointMnfQTanSpaceRed]{\embTanSpace}}
}]{\vfLag} \right)}{\dimMnfQ + k}{}\;
\\
	&\phantom{=}\qquad +
\evalField[\pointMnfQTanSpaceRed]{
	\fracdiff{\tidx{\embTanSpace}{\dimMnfQRed + j}{}}
		{\tidx{\chartmapTanSpaceMnfQRed}{\alpha}{}}
}
\tidx{\left(\evalField[
	{\evalFun[
		\pointMnfQTanSpaceRed
	]{\embTanSpace}}
]{
	\metric}\right)}{}{jk}
\evalField[{\pointQRed}]{
	\fracdiff{\tidx{\embQ}{k}{}}
		{\tidx{\chartmapMnfQRed}{\ell}{}}
}
\tidx{\velocityRed}{\ell}{}
\bigg)
\evalField[\pointMnfQTanSpaceRed]{
	\basisTMnfQRedCoTan{\alpha}
}
\\
		&=
\bigg(
\evalField[{\pointQRed}]{
	\fracdiff{\tidx{\embQ}{j}{}}
		{\tidx{\chartmapMnfQRed}{r}{}}
}
\tidx{\left(\evalField[
	{\evalFun[
		\pointMnfQTanSpaceRed
	]{\embTanSpace}}
]{
	\metricV}\right)}{}{jk}
\tidx{\left( \evalField[{
	{\evalFun[\pointMnfQTanSpaceRed]{\embTanSpace}}
}]{\vfLag} \right)}{\dimMnfQ + k}{}
\\
		&\phantom{=}\qquad +
\evalField[{\pointQRed}]{
	\sfracdiff{\tidx{\embQ}{j}{}}
		{\tidx{\chartmapMnfQRed}{p}{}}
		{\tidx{\chartmapMnfQRed}{r}{}\,}
}
\tidx{\velocityRed}{p}{}\,
\tidx{\left(\evalField[
	{\evalFun[
		\pointMnfQTanSpaceRed
	]{\embTanSpace}}
]{
	\metricQ}\right)}{}{jk}
\evalField[{\pointQRed}]{
	\fracdiff{\tidx{\embQ}{k}{}}
		{\tidx{\chartmapMnfQRed}{\ell}{}}
}
\tidx{\velocityRed}{\ell}{}
\bigg)
\evalField[\pointMnfQTanSpaceRed]{
	\basisTMnfQRedCoTanPos{r}
}
\\
		&\phantom{=}+
\evalField[{\pointQRed}]{
	\fracdiff{\tidx{\embQ}{j}{}}
		{\tidx{\chartmapMnfQRed}{r}{}}
}
\tidx{\left(\evalField[
	{\evalFun[
		\pointMnfQTanSpaceRed
	]{\embTanSpace}}
]{
	\metricQ}\right)}{}{jk}
\evalField[{\pointQRed}]{
	\fracdiff{\tidx{\embQ}{k}{}}
		{\tidx{\chartmapMnfQRed}{\ell}{}}
}
\tidx{\velocityRed}{\ell}{}\,
\evalField[\pointMnfQTanSpaceRed]{
	\basisTMnfQRedCoTanVel{r}
}
	\end{aligned}
	\end{equation}
	and observe that the last term equals $\tidx{\left(\evalField[\pointMnfQTanSpaceRed]{
		\metricQRed}\right)}{}{r\ell}
	$.
	Combining \eqref{eq:proof_lagrange_term_without_inv_metric} with \eqref{eq:proof_lagrange_inv_metricRed}, the \LMG reduction~\eqref{eqn:lmg_reduction} of the Euler--Lagrange vector field~\eqref{eq:def_vfLag} can be written (with the indices $1 \leq \beta, \gamma \leq 2\dimMnfQ$ and $1 \leq \alpha \leq 2\dimMnfQRed$ and $1 \leq j,k \leq \dimMnfQ$ and $1 \leq i,\ell,p,r \leq \dimMnfQRed$) as
	\begin{align*}
		\evalField[\pointMnfQTanSpaceRed]{
			\lmgReduction (\vfLag)
		}
		&=
		\tidx{\left(
			\evalFun[{
				\evalField[{{
					\evalFun[\pointMnfQTanSpaceRed]{\embTanSpace}
				}}]{\vfLag}
			}]{\left(
				\sharp_{\metricRed}
				\circ
				\evalField[\pointMnfQTanSpaceRed]{\dualdiff{\embTanSpace}}
				\circ
				\flat_{\metric}
			\right)}
		\right)}{\alpha}{}
		\evalField[\pointMnfQTanSpaceRed]{
			\basisTMnfQRedTan{\alpha}
		}\\
		&=
\tidx{\left(\evalField[\pointMnfQTanSpaceRed]{
	\metricQRed}\right)}{ir}{}
\tidx{\left(\evalField[\pointMnfQTanSpaceRed]{
	\metricQRed}\right)}{}{r\ell}
\tidx{\velocityRed}{\ell}{}\,
\evalField[\pointMnfQTanSpaceRed]{
	\basisTMnfQRedTanPos{i}
}
\\
		&\phantom{=} +
\tidx{\left(\evalField[\pointMnfQTanSpaceRed]{
	\metricVRed}\right)}{ir}{}
\bigg(
	\evalField[{\pointQRed}]{
		\fracdiff{\tidx{\embQ}{j}{}}
			{\tidx{\chartmapMnfQRed}{r}{}}
	}
	\tidx{\left(\evalField[
		{\evalFun[
			\pointMnfQTanSpaceRed
		]{\embTanSpace}}
	]{
		\metricV}\right)}{}{jk}
	\tidx{\left(\evalField[{
		{\evalFun[
			{\pointMnfQTanSpaceRed}
		]{\embTanSpace}}
	}]{\vfLag}\right)}{\dimMnfQ + k}{}
	\\
		&\phantom{= +\bigg(\evalFun[\pointMnfQTanSpaceRed]{\tidx{(\metricVRed)}{ik}{}}}
	+
	\evalField[{\pointQRed}]{
		\sfracdiff{\tidx{\embQ}{j}{}}
			{\tidx{\chartmapMnfQRed}{p}{}}
			{\tidx{\chartmapMnfQRed}{r}{}\,}
	}
	\tidx{\velocityRed}{p}{}\,
	\tidx{\left(\evalField[
		{\evalFun[
			\pointMnfQTanSpaceRed
		]{\embTanSpace}}
	]{
		\metricQ}\right)}{}{jk}
	\evalField[{\pointQRed}]{
		\fracdiff{\tidx{\embQ}{k}{}}
			{\tidx{\chartmapMnfQRed}{\ell}{}}
	}
	\tidx{\velocityRed}{\ell}{}
	\\
		&\phantom{= +\bigg(\evalFun[\pointMnfQTanSpaceRed]{\tidx{(\metricVRed)}{ik}{}}}
	-
	\tidx{\left(\evalField[\pointMnfQTanSpaceRed]{
		\metricQVRed}\right)}{}{r\ell}
	\tidx{\left(\evalField[\pointMnfQTanSpaceRed]{
		\metricQRed}\right)}{\ell p}{}
	\tidx{\left(\evalField[\pointMnfQTanSpaceRed]{
		\metricQRed}\right)}{}{ps}\,
	\tidx{\velocityRed}{s}{}
\bigg)
\evalField[\pointMnfQTanSpaceRed]{
	\basisTMnfQRedTanVel{i}
}
\\
&=
\tidx{\velocityRed}{i}{}\,
\evalField[\pointMnfQTanSpaceRed]{
	\basisTMnfQRedTanPos{i}
}
+
\tidx{\left(\evalField[\pointMnfQTanSpaceRed]{
	\metricVRed}\right)}{ir}{}
\bigg(
	\evalField[{\pointQRed}]{
		\fracdiff{\tidx{\embQ}{j}{}}
			{\tidx{\chartmapMnfQRed}{r}{}}
	}
	\tidx{\left(\evalField[
		{\evalFun[
			\pointMnfQTanSpaceRed
		]{\embTanSpace}}
	]{
		\metricV}\right)}{}{jk}
	\tidx{\left( \evalField[{
		{\evalFun[
			{\pointMnfQTanSpaceRed}
		]{\embTanSpace}}
	}]{\vfLag} \right)}{\dimMnfQ + k}{}
	\\
		&\phantom{= \tidx{\velocityRed}{i}{}\,\evalField[\pointMnfQTanSpaceRed]{\basisTMnfQRedTanPos{i}} +\bigg(\evalFun[\pointMnfQTanSpaceRed]{\tidx{(\metricVRed)}{ik}{}}}
	+
	\evalField[{\pointQRed}]{
		\sfracdiff{\tidx{\embQ}{j}{}}
			{\tidx{\chartmapMnfQRed}{p}{}}
			{\tidx{\chartmapMnfQRed}{r}{}\,}
	}
	\tidx{\velocityRed}{p}{}\,
	\tidx{\left(\evalField[
		{\evalFun[
			\pointMnfQTanSpaceRed
		]{\embTanSpace}}
	]{
		\metricQ}\right)}{}{jk}
	\evalField[{\pointQRed}]{
		\fracdiff{\tidx{\embQ}{k}{}}
			{\tidx{\chartmapMnfQRed}{\ell}{}}
	}
	\tidx{\velocityRed}{\ell}{}
	\\
		&\phantom{= \tidx{\velocityRed}{i}{}\,\evalField[\pointMnfQTanSpaceRed]{\basisTMnfQRedTanPos{i}} +\bigg(\evalFun[\pointMnfQTanSpaceRed]{\tidx{(\metricVRed)}{ik}{}}}
	-
	\evalField[\pointQRed]{
		\sfracdiff{\tidx{\embQ}{j}{}}
			{\tidx{\chartmapMnfQRed}{r}{}}
			{\tidx{\chartmapMnfQRed}{p}{}\,}
	}
	\tidx{\velocityRed}{p}{}\,
	\tidx{\left(\evalField[
		{\evalFun[
			\pointMnfQTanSpaceRed
		]{\embTanSpace}}
	]{
		\metricQ}\right)}{}{jk}
	\evalField[\pointQRed]{
		\fracdiff{\tidx{\embQ}{k}{}}
			{\tidx{\chartmapMnfQRed}{\ell}{}}
	}
	\tidx{\velocityRed}{\ell}{}\\
		&\phantom{= \tidx{\velocityRed}{i}{}\,\evalField[\pointMnfQTanSpaceRed]{\basisTMnfQRedTanPos{i}} +\bigg(\evalFun[\pointMnfQTanSpaceRed]{\tidx{(\metricVRed)}{ik}{}}}
	-
	\evalField[\pointQRed]{
		\fracdiff{\tidx{\embQ}{j}{}}
			{\tidx{\chartmapMnfQRed}{r}{}}
	}
	\tidx{\left(\evalField[
		{\evalFun[
			\pointMnfQTanSpaceRed
		]{\embTanSpace}}
	]{
		\metricV}\right)}{}{jk}
	\evalField[\pointQRed]{
		\sfracdiff{\tidx{\embQ}{k}{}}
			{\tidx{\chartmapMnfQRed}{p}{}}
			{\tidx{\chartmapRed}{\ell}{}\,}
	}
	\tidx{\velocityRed}{\ell}{}\,
	\tidx{\velocityRed}{p}{}
\bigg)
\evalField[\pointMnfQTanSpaceRed]{
	\basisTMnfQRedTanVel{i}
}
\\
		&=
\evalField[\pointMnfQTanSpaceRed]{\vfLagRed}
	\end{align*}
	Thus, the vector field obtained with the \LMG reduction~\eqref{eqn:lmg_reduction} with the \LMG tensor field~$\metricLMG$ from~\eqref{eqn:LMGtensorField} results in the reduced Euler--Lagrange vector field \eqref{eq:red_vf_euler_lagrange}, which is equivalent to solving the reduced Euler--Lagrange equations~\eqref{eq:red_euler_lagrange} by construction.

\end{document}